\documentclass{article}
\usepackage{geometry}
\usepackage{mythmsub}
\usepackage{amsthm}
\usepackage{tikz-cd}
\usepackage[shortlabels]{enumitem}
\usepackage[bookmarks]{hyperref}
\usepackage{titlesec}
\numberwithin{equation}{thm}
\begin{document}
\newcommand{\eff}[0]{{e\!f\!f}}
\newcommand{\dmeff}[0]{{\rm DM}^\eff}
\newcommand{\dmeffloc}[0]{{\rm DM}^{\eff,loc}}
\title{Construction of triangulated categories of motives using the localization property}
\author{Doosung Park}
\date{}
\maketitle
\begin{abstract}
Using the localization property, we construct a triangulated category of motives over quasi-projective $T$-schemes for any coefficient where $T$ is a noetherian separated scheme, and we prove the Grothendieck six operations formalism. We also construct integral \'etale realization of motives.
\end{abstract}
\titlelabel{\thetitle.\;\;}
\titleformat*{\section}{\center \bf}
\section{Introduction}
\begin{none}\label{0.1}
  Throughout this paper, fix a ring $\Lambda$, and fix a noetherian separated scheme $T$. Let
  \begin{enumerate}[(1)]
    \item $\mathscr{S}$ (resp.\ $\mathscr{S}^{sm}$) denote the category of schemes quasi-projective (resp.\ quasi-projective and smooth) over $T$,
    \item ${\rm Tri}^\otimes$ denote the $2$-category of symmetric monoidal triangulated categories where $1$-morphisms are symmetric monoidal functors and $2$-morphisms are symmetric monoidal natural transformations,
    \item $Sm$ denote the class of smooth morphisms of schemes.
  \end{enumerate}
\end{none}
\begin{none}\label{0.2}
  According to \cite{Ayo07}, when $\Lambda$ is a $\mathbb{Q}$-algebra, we have triangulated categories of motives over schemes satisfying the Grothendieck six operations formalism in \cite[2.4.50]{CD12}. However, for general $\Lambda$, such a construction have not been available yet according to our knowledge.
\end{none}
\begin{none}\label{0.3}
  In this paper, we define ${\rm DM}^{loc}$, which is a successful construction of triangulated categories of motives over quasi-projective $T$-schemes with all coefficients satisfying the Grothendieck six operations formalism. Our idea is as follows. For any object $S$ of $\mathscr{S}^{sm}$, the construction ${\rm DM}(S,\Lambda)$ in \cite[11.1.1]{CD12} seems to enjoy many useful properties, so put
  \[{\rm DM}^{loc}(S,\Lambda)={\rm DM}(S,\Lambda)\]
  for such an $S$.

  Assume that we can extend the above ${\rm DM}^{loc}(-,\Lambda)$ to $\mathscr{S}$ such that the Grothendieck six operations formalism is satisfied. Let $i:Z\rightarrow S$ be a closed immersion in $\mathscr{S}$, and let $j:U\rightarrow S$ denote its complement. Then from the formalism, $i_*$ should be fully faithful, and we should have the distinguished triangle
  \[j_\sharp j^*\longrightarrow {\rm id}\longrightarrow i_*i^*\longrightarrow j_\sharp j^*[1].\]
  Thus we see that ${\rm DM}^{loc}(Z,\Lambda)$ should be the full subcategory of ${\rm DM}^{loc}(S,\Lambda)$ consisting of objects $K$ of ${\rm DM}^{loc}(S,\Lambda)$ such that $j^*K=0$.

  We can consider it as our definition of triangulated categories of motives. A problem is that the definition itself is dependent on the choice of the closed immersion $i:Z\rightarrow S$, so we need to show that the definition is independent of the choice of $i$. We also need to show that ${\rm DM}^{loc}(-,\Lambda)$ satisfies the Grothendieck six operations formalism. We solve these problems by proving the following theorems.
\end{none}
\begin{thm}\label{0.4}
  {\rm ((\ref{6.3}) in the text)} Consider a diagram
  \[\begin{tikzcd}
    \mathscr{S}^{sm}\arrow[rd,"V"']\arrow[rr,"W"]&&\mathscr{S}\\
    &{\rm Tri}^{\otimes}
  \end{tikzcd}\]
  of $2$-categories where
  \begin{enumerate}[{\rm (i)}]
    \item $V$ is a $Sm$-premotivic pseudofunctor satisfying {\rm (Loc)} {\rm (}see {\rm (\ref{3.2})} and {\rm (\ref{2.6})} for the definitions{\rm )},
    \item $W$ denotes the inclusion functor.
  \end{enumerate}
  Then there is a contravariant pseudofunctor $H:\mathscr{S}\rightarrow {\rm Tri}^{\otimes}$ satisfying {\rm (Loc)} and a pseudonatural equivalence $\kappa:V\rightarrow H\circ W$ making the above diagram commutes.

  The above construction is functorial in the following sense. Suppose that we have a diagram
  \[\begin{tikzcd}
    \mathscr{S}^{sm}\arrow[rd,"V'"]\arrow[rd,bend right,"V"']\arrow[rr,"W"]&&\mathscr{S}\arrow[ld,"H"']\arrow[ld,"H'",bend left]\\
    &{\rm Tri}^{\otimes}&\;
  \end{tikzcd}\]
  of $2$-categories and a diagram
  \begin{equation}\label{0.4.1}\begin{tikzcd}
    V\arrow[d,"\epsilon"]\arrow[r,"\kappa"]&H\circ W\\
    V'\arrow[r,"\kappa'"]&H'\circ W
  \end{tikzcd}\end{equation}
  of pseudofunctors where $V'$ and $H'$ are $Sm$-premotivic pseudofunctors satisfying {\rm (Loc)}, $\kappa'$ is a pseudonatural equivalence, and $\epsilon$ is a $Sm$-premotivic pseudonatural transformation {\rm (}see {\rm (\ref{3.5})} for the definition{\rm )}. Then there exists a $Sm$-premotivic pseudonatural transformation $H\rightarrow H'$ unique up to isomorphism such that the induced $Sm$-pseudonatural transformation $H\circ W\rightarrow H'\circ W$ makes the diagram {\rm (\ref{0.4.1})} commutative.
\end{thm}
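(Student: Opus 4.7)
The plan is to make the heuristic of (\ref{0.3}) rigorous. For each $S\in\mathscr{S}$, exploit quasi-projectivity over $T$ to choose a closed immersion $i_S\colon S\hookrightarrow P_S$ with $P_S\in\mathscr{S}^{sm}$ (for instance, take $P_S$ to be an open subscheme of some $\mathbb{P}^n_T$ in which $S$ sits as a closed subscheme). Let $j_S\colon U_S\hookrightarrow P_S$ denote the open complement, and put
\[H(S):=\{K\in V(P_S):j_S^*K=0\},\]
viewed as a full triangulated subcategory of $V(P_S)$; it inherits a symmetric monoidal structure from $V(P_S)$ because $j_S^*$ is symmetric monoidal. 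When $S$ itself lies in $\mathscr{S}^{sm}$, (Loc) for $V$ directly yields the pseudonatural equivalence $\kappa\colon V(S)\to H(S)$ induced by the fully faithful functor $i_{S*}$, and this is by construction the identity when we choose $P_S=S$.

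The main obstacle is showing this definition is independent of $(P_S,i_S)$ in a sufficiently coherent way to assemble into a genuine pseudofunctor. Given a second closed immersion $i'\colon S\hookrightarrow P'$ with $P'\in\mathscr{S}^{sm}$, form the diagonal-style closed immersion $S\hookrightarrow P_S\times_T P'$ together with the two smooth projections. The key technical lemma to prove is: for a smooth morphism $p\colon Q\to P$ between smooth $T$-schemes and a lift $\tilde\imath\colon S\hookrightarrow Q$ of a closed immersion $i\colon S\hookrightarrow P$ with $p\tilde\imath=i$, the pullback $p^*\colon V(P)\to V(Q)$ restricts to an equivalence between the full subcategories of objects supported on $S$. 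This reduces to a computation with the localization triangle from (Loc) together with the smooth base-change and $\sharp$-adjunction built into the $Sm$-premotivic structure. Extracting \emph{coherent} natural isomorphisms (not merely the existence of an equivalence) for all choices of $(P_S,i_S)$ and every composition is the delicate point and consumes most of the bookkeeping.

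With the comparison lemma in hand, functoriality in morphisms $f\colon S'\to S$ of $\mathscr{S}$ is supplied by the graph trick: the composite $(i_{S'},i_S\circ f)\colon S'\hookrightarrow P_{S'}\times_T P_S$ is a closed immersion, and the smooth projection $P_{S'}\times_T P_S\to P_S$ lifts $f$, so pullback along it restricts to a symmetric monoidal exact functor $H(S)\to H(S')$, well-defined up to canonical isomorphism. Pseudofunctoriality and the monoidal structure on $H$ then transfer from the corresponding features of $V$, and (Loc) for $H$ is checked by embedding any closed immersion in $\mathscr{S}$ into an ambient smooth $T$-scheme so as to reduce to the localization triangle already available in $V$. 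For the functoriality clause, a $Sm$-premotivic pseudonatural transformation $\epsilon\colon V\to V'$ induces $H\to H'$ by restricting each component $\epsilon_{P_S}\colon V(P_S)\to V'(P_S)$ to the subcategory cut out by $j_S^*=0$; this is well-defined because $\epsilon$ commutes with $j_S^*$ up to coherent isomorphism, and uniqueness up to isomorphism holds because the values on $\mathscr{S}^{sm}$ are forced by $\kappa$ and $\kappa'$, while those on a general $S\in\mathscr{S}$ are then determined by the ambient datum on $P_S$.
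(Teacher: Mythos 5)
Your proposal is in spirit the same as the paper's: define $H(S)$ as the full subcategory of $V(P_S)$ of objects annihilated by restriction to the open complement of $S$ in some chosen smooth ambient embedding, compare embeddings and morphisms via products of ambient smooth schemes, and transfer (Loc) from $V$. The paper formalizes the bookkeeping through an auxiliary category $\widetilde{\mathcal{C}}$ of ``embedded schemes'' together with an abstract descent lemma (conditions (A--1)--(A--3) in (\ref{1.2})), and your choose-one-$P_S$-per-$S$ presentation is just an unwound version of the same data. Nonetheless I want to flag two places where the sketch glides past the substance.

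First, the claimed ``key technical lemma'' is not quite right as stated and its proof is harder than ``a computation with the localization triangle plus smooth base change.'' For smooth $p\colon Q\to P$ with compatible closed immersions of $S$, the pullback $p^*$ does \emph{not} restrict to a functor from objects supported on $S\subset P$ to objects supported on $S\subset Q$: if $j_Q$ denotes the open complement of $S$ in $Q$, then $j_Q^*p^*K$ has no reason to vanish, since $p^{-1}(S)$ is larger than $S$. The comparison functor is $p^*$ followed by the support-projection $\rho_{\tilde\imath}^*$ to the subcategory cut out by $j_Q^*=0$ (as in (\ref{4.3})--(\ref{4.6}) of the paper). Proving that this composite is an equivalence is the content of (\ref{5.6}), and the argument there is not a one-line base-change calculation: after reducing via the graph factorization to a smooth projection, one needs a Zariski-local reduction (\ref{5.4}), a gluing lemma of Schwede {\rm \cite[3.9]{Sch05}} to embed both ambient smooth schemes as closed subschemes of a common scheme, a further affine cover, the separate open-immersion and closed-immersion cases (\ref{5.2})--(\ref{5.3}), and a permutation trick (\ref{5.5}). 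Any write-up along your lines will have to reproduce essentially this chain.

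Second, you never address the $Sm$-premotivic structure of $H$, i.e., the existence of left adjoints $f_\sharp$ for smooth morphisms $f\colon S'\to S$ in $\mathscr{S}$. This is not a formality: ``$H$ satisfies (Loc)'' already presupposes (Loc) makes sense, and the functoriality clause quantifies over $Sm$-premotivic $H$ and $H'$. The transfer from $V$ gives $f_\sharp$ only for those $f$ fitting into Cartesian squares over smooth ambient schemes, which is a priori a Zariski-local condition on $S'$; to go from ``$f^*$ has a left adjoint Zariski-locally'' to ``$f^*$ has a left adjoint'' the paper invokes {\rm \cite[IV.18.1.1]{EGA}} together with a Mayer--Vietoris gluing of adjoints (\ref{6.1}), which in turn requires quasi-compactness. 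This step, and the companion (\ref{6.4}) needed for the uniqueness-of-$\gamma$ clause, are missing from your outline and would need to be supplied.
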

\begin{none}\label{0.5}
  Note that in (\ref{0.4}), the functoriality implies that $H$ is unique up to pseudonatural equivalence.
\end{none}
\begin{thm}\label{0.7}
  {\rm ((\ref{7.2}) in the text)} In {\rm (\ref{0.4})}, if $V$ satisfies the axioms from {\rm (B--5)} to {\rm (B--8)} in {\rm (2.10)}, then $H$ satisfies the Grothendieck six operations formalism in  {\rm \cite[2.4.50]{CD12}}, which is as follows.
  \begin{enumerate}[{\rm (1)}]
    \item For any morphism $f:X\rightarrow S$ in $\mathscr{S}$, there exists $3$ pairs of adjoint functors as follows:
    \[f^*:H(S)\rightleftarrows H(X):f_*,\]
    \[f_!:H(X)\rightleftarrows H(S):f^!,\]
    \[(\otimes,Hom),\text{ symmetric closed monoidal structure on }H(S).\]
    \item There exists a structure of covariant {\rm (}resp.\ contravariant{\rm )} pseudofunctors on $f\mapsto f_*$, $f\mapsto f_!$ {\rm (}resp.\ $f\mapsto f^*$, $f\mapsto f^!${\rm )}.
    \item There exists a canonical natural transformation
    \[\alpha_f:f_!\rightarrow f_*\]
    which is an isomorphism when $f$ is proper.
    \item For any smooth separated morphism $f:X\rightarrow S$ in $\mathscr{S}$ of relative dimension $d$ with tangent bundle $T_f$, there exists a canonical natural isomorphism
    \[f^!\otimes MTh(-T_f)\longrightarrow f^*.\]
    See {\rm \cite[2.4.12]{CD12}} for the definition of $MTh$.

    If $H$ admits an orientation in the sense of {\rm \cite[2.4.38]{CD12}} and $f$ has dimension $d$, then there exists a canonical natural isomorphism
    \[f^!(-d)[-2d]\rightarrow f^*.\]
    \item For any Cartesian diagram
    \[\begin{tikzcd}
      X'\arrow[d,"f'"]\arrow[r,"g'"]&X\arrow[d,"f"]\\
      S'\arrow[r,"g"]&S
    \end{tikzcd}\]
    in $\mathscr{S}$, there exist natural isomorphisms
    \[g^*f_!\stackrel{\sim}\rightarrow f_!'g'^*,\]
    \[g_*'f'^!\stackrel{\sim}\rightarrow f^!g_*.\]
    \item For any morphism $f:X\rightarrow S$ in $\mathscr{S}$, there exist natural isomorphisms
    \[f_!K\otimes_S L\stackrel{\sim}\longrightarrow f_!(K\otimes_X L),\]
    \[Hom_S(f_!(L),K)\stackrel{\sim}\longrightarrow f_*Hom_X(L,f^!K),\]
    \[f^!Hom_S(L,M)\stackrel{\sim}\longrightarrow Hom_X(f^*L,f^!M).\]
    \item For any closed immersion $i:Z\rightarrow S$ with complement $j:U\rightarrow S$, there exists a distinguished triangle of natural transformations
    \[j_!j^!\stackrel{ad'}\longrightarrow {\rm id}\stackrel{ad}\longrightarrow i_*i^*\stackrel{\partial_i}\longrightarrow j_!j^![1]\]
    where $ad$ {\rm (}resp.\ $ad'${\rm )} denotes the unit {\rm (}resp.\ counit{\rm )} of the relevant adjunction.
  \end{enumerate}
\end{thm}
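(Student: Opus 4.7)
The overall strategy is to bootstrap from the smooth stratum, where the axioms (B--5)--(B--8) on $V$ give strong information, to the full six functor package on $H$, using the localization triangle from (Loc) to reduce every verification either to the smooth case or to a closed immersion, and invoking Nagata compactification to bridge the two.

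I would first set up the basic adjunctions. For any $f:X\to S$ in $\mathscr{S}$, Brown representability on each triangulated category $H(S)$ produces $f_*$ right adjoint to $f^*$, which is itself part of the data of the contravariant pseudofunctor $H$. For smooth $p$, the $Sm$-premotivic structure of $H$ transported from $V$ via Theorem~\ref{0.4} supplies the left adjoint $p_\sharp$. For a closed immersion $i:Z\to S$ with open complement $j:U\to S$, the distinguished triangle $j_\sharp j^*\to\mathrm{id}\to i_*i^*\to j_\sharp j^*[1]$ built into (Loc) makes $i_*$ fully faithful, and a standard gluing argument yields a right adjoint $i^!$ to $i_*$ together with the complementary triangle $i_*i^!\to\mathrm{id}\to j_*j^*\to i_*i^![1]$. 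Setting $i_!:=i_*$ and $j_!:=j_\sharp$, I extend $f_!$ to an arbitrary $f:X\to S$ via Nagata compactification (available because $\mathscr{S}$ consists of quasi-projective $T$-schemes): factor $f=p\circ j$ with $j$ open and $p$ proper, and set $f_!:=p_*\circ j_\sharp$.

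With $f_!$ in hand, the remaining clauses assemble by combining smooth and closed cases along the (Loc) triangles. Clause~(5) comes from establishing smooth base change via (B--6) and proper base change via (Loc) applied to Cartesian squares along a closed immersion, then stitching these together by Nagata. Clause~(6): the tensor projection formula follows by Nagata decomposition from the smooth projection formula (axiom (B--7)) and its proper analogue derived by adjunction; the $Hom$ formulas follow formally. Clause~(4): the smooth purity isomorphism $p^!\otimes MTh(-T_p)\cong p^*$ is axiom (B--8), and its orientation-refined form comes from the Thom trivialization. Clauses~(2) and~(3) are formal; in particular $\alpha_f:f_!\to f_*$ arises from the Nagata data and is an isomorphism precisely when $f$ is proper. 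Clause~(7) is just (Loc) rewritten with $i_!=i_*$ and $j_!=j_\sharp$.

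The principal obstacle is the well-definedness of $f_!$ via Nagata: given two compactifications, I must produce a canonical comparison isomorphism and check all compatibilities. The standard approach, which I plan to follow in the spirit of \cite[\S 2.4]{CD12}, is to form the fiber product of two compactifications and reduce to proper base change for the projection from a projective bundle. This last input is precisely what (B--5)--(B--8) on $V$ are designed to supply on the smooth stratum, while closed strata are handled by (Loc); the full formalism then emerges by an induction along the localization triangles provided by Theorem~\ref{0.4}.
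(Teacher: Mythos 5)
The paper's proof is far more economical than your proposal, and the difference is instructive. The paper does not re-derive the six operations formalism: it cites \cite[2.4.50]{CD12} wholesale once $H$ is known to be a motivic pseudofunctor in the sense of \cite[2.4.45]{CD12}. The work consists of exactly two steps: Theorem~(\ref{7.1}) shows that $H$ inherits the axioms (B--5)--(B--8) from $V$, by embedding each $Z\in\mathscr{S}$ into a smooth $S\in\mathscr{S}^{sm}$ via a closed immersion $i:Z\to S$ and using (Loc) to transfer each property across the localization triangle; then \cite[2.4.47]{CD12} gives (B--9), so $H$ is motivic, and \cite[2.4.50]{CD12} gives the conclusion. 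You, by contrast, propose to rebuild the entire $f_!$/Nagata machinery from scratch, which is the content of \cite[\S2.4]{CD12}, not of this paper.

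Beyond the inefficiency, there are two genuine issues. First, you misattribute the roles of the axioms: (B--6) is $\otimes$-invertibility of the Tate twist, (B--7) is generation by twists of motives of smooth schemes, and (B--8) is compactness of those generators; none of them is ``smooth base change,'' ``the smooth projection formula,'' or ``smooth purity'' as you claim. Smooth base change and the projection formula are already part of the premotivic structure (B--3) and (B--4), and purity is a theorem of \cite{CD12} derived from all the motivic axioms together, not a hypothesis. Second, and more seriously, you never actually establish that $H$ itself satisfies (B--5)--(B--8); you only say that these axioms hold for $V$ and ``supply'' things ``on the smooth stratum.'' Since the whole point is that $H$ is defined on all quasi-projective $T$-schemes and $V$ only on smooth ones, the nontrivial content of the theorem is precisely this transfer. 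That is what Theorem~(\ref{7.1}) does via the closed-embedding-into-smooth trick, and your write-up never confronts it. If you fix the axiom attributions and insert the transfer step, your strategy collapses into citing the existing results of \cite{CD12}, which is what the paper does.
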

\begin{none}\label{0.6}
  Our primary application of the theorems is as follows. Consider the diagram
  \[\begin{tikzcd}
    \mathscr{S}^{sm}\arrow[rd,"{{\rm DM}(-,\Lambda)}"']\arrow[rr,"W"]&&\mathscr{S}\\
    &{\rm Tri}^{\otimes}
  \end{tikzcd}\]
  of $2$-categories. In (\ref{8.1}), we will check that ${\rm DM}(-,\Lambda):\mathscr{S}^{sm}\rightarrow {\rm Tri}^{\otimes}$ is a $Sm$-premotivic pseudofunctor satisfying (Loc) and the axioms from (B--5) to (B--8), so by (\ref{0.5}), there is a pseudofunctor ${\rm DM}^{loc}(-,\Lambda):\mathscr{S}\rightarrow {\rm Tri}^{\otimes}$ unique up to pseudonatural equivalence such that it is an extension of ${\rm DM}(-,\Lambda):\mathscr{S}^{sm}\rightarrow {\rm Tri}^{\otimes}$ and satisfies the Grothendieck six operations formalism.

  We can also apply the theorem to \'etale realization. Let $n$ be a positive integer, and assume that $T$ is a noetherian scheme separated over ${\rm Spec}\,\mathbb{Z}[1/n]$. In (\ref{8.4}), we will construct the \'etale realization of ${\rm DM}(-,\mathbb{Z}):\mathscr{S}^{sm}\rightarrow {\rm Tri}^{\otimes}$, which is a $Sm$-premotivic pseudonatural transformation
  \[R_{\et,n}:{\rm DM}(-,\mathbb{Z})\rightarrow {\rm D}_{\et}(-,\mathbb{Z}/n\mathbb{Z}).\]
  From this, we have the commutative diagram
  \[\begin{tikzcd}
    \mathscr{S}^{sm}\arrow[rrdd,"{{\rm DM}(-,\mathbb{Z})}"',bend right]\arrow[rrdd,"{{\rm D}_{\et}(-,\mathbb{Z}/n\mathbb{Z})}",near end]\arrow[rrrr,"W"]&&\,&\,&\mathscr{S}\\
    \\
    \,\arrow[rrruu,phantom,"{\;\;\;\;\;\;\;\;\;\rotatebox[origin=c]{45}{$\Rightarrow$}_{R_{\et}}}",near start]&&{\rm Tri}^{\otimes}
  \end{tikzcd}\]
  of $2$-categories. Then by (\ref{0.4}), there is a $Sm$-premotivic pseudonatural transformation
  \[R_{\et,n}^{loc}:{\rm DM}^{loc}(-,\mathbb{Z})\rightarrow {\rm D}_{\et}(-,\mathbb{Z}/n\mathbb{Z})\]
  unique up to isomorphism such that it is an extension of $R_{\et,n}:{\rm DM}(-,\mathbb{Z})\rightarrow {\rm D}_{\et}(-,\mathbb{Z}/n\mathbb{Z})$.
\end{none}
\begin{none}\label{0.8}
  We also prove that ${\rm DM}^{loc}(-,\Lambda)$ admits an orientation, so the second part of (\ref{0.7}(4)) holds for ${\rm DM}^{loc}(-,\Lambda)$.
\end{none}
\begin{none}
  {\bf Organization of the paper.} In Section 2, we review pseudofunctors, pseudonatural transformations, and modifications to help the reader to understand our notations. Then we review $\mathscr{P}$-premotivic pseudofunctors and $\mathscr{P}$-premotivic pseudonatural transformations. In Section 4, we define categories with immersions, and we review the localization property and motivic pseudofunctors. We also construct a category $\widetilde{\mathcal{C}}$ that will be used later.

  In Sections 3, 5, 6, and 7, we prove the main theorem (\ref{0.4}) by constructing the commutative diagram
  \[\begin{tikzcd}
    \mathcal{E}\arrow[rd,"V"']\arrow[r,"U"]&\widetilde{\mathcal{C}}\arrow[d,"G"]\arrow[ld,phantom,"{\rotatebox[origin=c]{0}{$\Leftrightarrow$}}_{\delta}",very near start]\arrow[r,"F"]\arrow[rd,phantom,"{\rotatebox[origin=c]{0}{$\Leftrightarrow$}}_\alpha",very near start]&\mathcal{C}\arrow[ld,"H"]\\
    \,&{\rm Tri}^\otimes&\,
  \end{tikzcd}\]
  of $2$-categories where $\mathcal{C}=\mathscr{S}$ and $\mathcal{E}=\mathscr{S}^{sm}$. In Section 3, we construct $H$ and $\alpha$ when $G$ is given and some conditions are satisfied. In Section 5, we construct $G$ and $\delta$. In Section 6, we show a condition for $F$ so that we can use the argument in Section 3. In Section 7, we show that $H$ is $Sm$-premotivic, and then we prove the main theorem (\ref{0.4}).

  In Section 8, we prove (\ref{0.7}), and in Section 9, we apply (\ref{0.4}) and (\ref{0.7}) to construct ${\rm DM}^{loc}(-,\Lambda)$ and its \'etale realization. We also construct a pseudonatural equivalence between ${\rm DM}^{loc}(-,\Lambda)$ and ${\rm DA}_{\et}(-,\Lambda)$ when $\Lambda$ is a ${\bf Q}$-algebra. In Section 10, we prove that ${\rm DM}^{loc}(-,\Lambda)$ admits an orientation.
\end{none}
\begin{none}
  {\bf Terminology and conventions.} Alongside (\ref{0.1}), we have the following notations. When we have an adjunction
  \[F:\mathcal{C}\leftrightarrows \mathcal{D}:G\]
  where $F$ and $G$ are contravariant functors of categories, we denote by $ad$ (resp.\ $ad'$) the unit ${\rm id}\longrightarrow GF$ (resp.\ the counit $FG\longrightarrow {\rm id}$).
\end{none}
\begin{none}
  {\bf Acknowledgements.} The author is grateful to Martin Olsson for helpful conversations. The author is grateful to Adeel Khan for indicating that orientation for ${\rm DM}^{loc}(-,\Lambda)$ is needed. Section 10 is obtained from a conversation with him.
\end{none}
\section{Premotivic pseudofunctors}
\begin{none}\label{3.6}
  To help the reader to understand our notations, we include the definitions of pseudofunctors, pseudonatural transformations, and modifications as follows.
\end{none}
\begin{df}\label{3.7}
  Let $\mathcal{C}$ be a category, and let $\mathcal{D}$ be a $2$-category. A contravariant {\it pseudofunctor} $F:\mathcal{C}\rightarrow \mathcal{D}$ is the data of
  \begin{enumerate}[(1)]
    \item an object $F(S_1)$ of $\mathcal{D}$ for each object $S_1$ of $\mathcal{C}$,
    \item a $1$-morphism
    \[F(f_1):F(S_1)\rightarrow F(S_2)\]
    in $\mathcal{D}$ for each morphism $f_1:S_2\rightarrow S_1$ in $\mathcal{C}$,
    \item an invertible $2$-morphism
    \[F_{{\rm id}_{S_1}}:F({\rm id}_{S_1})\rightarrow {\rm id}\]
    in $\mathcal{D}$ for each object $S_1$ of $\mathcal{C}$,
    \item an invertible $2$-morphism
    \[F_{f_1,f_2}:F(f_2)F(f_1)\rightarrow F(f_1f_2)\]
    in $\mathcal{D}$ for each pair of morphisms $f_1:S_2\rightarrow S_1$ and $f_2:S_3\rightarrow S_2$ in $\mathcal{C}$
  \end{enumerate}
  satisfying the following axioms.
  \begin{enumerate}[(i)]
    \item For any morphism $f_1:S_2\rightarrow S_1$ in $\mathcal{C}$, the diagram
    \[\begin{tikzcd}
      F(f_1)F({\rm id}_{S_1})\arrow[r,"F_{{\rm id}_{S_1},f_1}"]\arrow[d,"F_{{\rm id}_{S_1}}"]&F({\rm id}_{S_1}\circ f_1)\arrow[d,equal]\\
      F(f_1)\circ {\rm id}\arrow[r,equal]&F(f_1)
    \end{tikzcd}\]
    in $\mathcal{D}$ commutes.
    \item For any morphism $f_1:S_2\rightarrow S_1$ in $\mathcal{C}$, the diagram
    \[\begin{tikzcd}
      F({\rm id}_{S_2})F(f_1)\arrow[r,"F_{f_1,{\rm id}_{S_2}}"]\arrow[d,"F_{{\rm id}_{S_2}}"]&F(f_1\circ {\rm id}_{S_2})\arrow[d,equal]\\
      {\rm id}\circ F(f_1) \arrow[r,equal]&F(f_1)
    \end{tikzcd}\]
    in $\mathcal{D}$ commutes.
    \item For any morphisms $S_4\stackrel{f_3}\rightarrow S_3\stackrel{f_2}\rightarrow S_2\stackrel{f_1}\rightarrow S_1$ in $\mathcal{C}$, the diagram
    \[\begin{tikzcd}
      F(f_3)F(f_2)F(f_1)\arrow[r,"F_{f_1,f_2}"]\arrow[d,"F_{f_2,f_3}"]&F(f_3)F(f_1f_2)\arrow[d,"F_{f_1f_2,f_3}"]\\
      F(f_2f_3)F(f_1)\arrow[r,"F_{f_1,f_2f_3}"]&F(f_1f_2f_3)
    \end{tikzcd}\]
    commutes.
  \end{enumerate}
  We often write $f_1^*$ for $F(f_1)$ when no confusion seems likely to arise.
\end{df}
\begin{df}
  Let $\mathcal{C}$ be a category, let $\mathcal{D}$ be a $2$-category, and let $F,F':\mathcal{C}\rightarrow \mathcal{D}$ be contravariant pseudofunctors. A {\it pseudonatural transformation} $\alpha:F\rightarrow F'$ is the data of
  \begin{enumerate}[(1)]
    \item $1$-morphism
    \[\alpha(S_1):F(S_1)\rightarrow F'(S_1)\]
    in $\mathcal{D}$ for each object $S_1$ of $\mathcal{C}$,
    \item invertible $2$-morphism
    \[\alpha_{f_1}:\alpha(S_2)F(f_1)\rightarrow F'(f_1)\alpha(S_1)\]
    in $\mathcal{D}$ for each morphism $f_1:S_2\rightarrow S_1$ in $\mathcal{C}$
  \end{enumerate}
  satisfying the following axioms.
  \begin{enumerate}[(i)]
    \item For any object $S_1$ of $\mathcal{C}$, the diagram
    \[\begin{tikzcd}
      \alpha(S_1)F({\rm id}_{S_1})\arrow[rr,"\alpha_{{\rm id}_{S_1}}"]\arrow[rd,"F_{{\rm id}_{S_1}}"']&&F'({{\rm id}_{S_1}})\alpha(S_1)\arrow[ld,"F_{{\rm id}_{S_1}}'"]\\
      &\alpha(S_1)
    \end{tikzcd}\]
    in $\mathcal{D}$ commutes.
    \item For any morphisms $S_3\stackrel{f_2}\rightarrow S_2\stackrel{f_1}\rightarrow S_1$ in $\mathcal{C}$, the diagram
    \[\begin{tikzcd}
      \alpha(S_3)F(f_2)F(f_1)\arrow[d,"F_{f_1,f_2}"]\arrow[r,"\alpha_{f_2}"]&F'(f_2)\alpha(S_2)F(f_1)\arrow[r,"\alpha_{f_1}"]&F'(f_2)F'(f_1)\alpha(S_1)\arrow[d,"F_{f_1,f_2}'"]\\
      \alpha(S_3)F(f_1f_2)\arrow[rr,"\alpha_{f_1f_2}"]&&F'(f_1f_2)\alpha(S_1)
    \end{tikzcd}\]
    in $\mathcal{D}$ commutes.
  \end{enumerate}
  A pseudonatural transformation $\alpha:F\rightarrow F'$ is called a {\it pseudonatural equivalence} if $\alpha(S_1)$ is invertible for any object $S_1$ of $\mathcal{C}$.
\end{df}
\begin{df}\label{3.11}
  Let $\mathcal{C}$ be a category, let $\mathcal{D}$ be a $2$-category, let $F,F':\mathcal{C}\rightarrow \mathcal{D}$ be contravariant pseudofunctors, and let $\alpha,\alpha':F\rightarrow F'$ be pseudonatural transformations. A {\it modification} $\Phi:\alpha\rightarrow \alpha'$ is the data of $2$-morphism
  \[\Phi_{S_1}:\alpha(S_1)\rightarrow \alpha'(S_1)\]
  for each object $S_1$ of $\mathcal{C}$ satisfying the axiom that for any morphism $f_1:S_2\rightarrow S_1$ in $\mathcal{C}$, the diagram
  \[\begin{tikzcd}
    \alpha(S_2)F(f_1)\arrow[d,"\alpha_{f_1}"]\arrow[r,"\Phi_{S_2}"]&\alpha'(S_2)F(f_1)\arrow[d,"\alpha_{f_1}'"]\\
    F'(f_1)\alpha(S_1)\arrow[r,"\Phi_{S_1}"]&F'(f_1)\alpha'(S_1)
  \end{tikzcd}\]
  in $\mathcal{D}$ commutes.

  A modification $\Phi:\alpha\rightarrow \alpha'$ is called an {\it isomorphism} if $\Phi_{S_1}$ is invertible for any object $S_1$ of $\mathcal{C}$.
\end{df}
\begin{none}\label{3.4}
  Now, following \cite[1.4.2, 1.4.6]{CD12}, we will define $\mathscr{P}$-premotivic pseudofunctors and $\mathscr{P}$-premotivic pseudonatural transformations as follows.
\end{none}
\begin{df}\label{3.2}
  Let $\mathcal{C}$ be a category with fiber products, and let $\mathscr{P}$ be a class of morphisms of $\mathcal{C}$ containing all isomorphisms and stable by compositions and pullbacks. We say that a pseudofunctor
  \[H:\mathcal{C}\longrightarrow {\rm Tri}^\otimes\]
  is $\mathscr{P}$-{\it premotivic} if it satisfies the following axioms.
  \begin{enumerate}
    \item[(B--1)] For any morphism $f$ in $\mathcal{C}$, the functor $f^*:=H(f)$ has a right adjoint, denoted by $f_*$.
    \item[(B--2)] For any morphism $f$ in $\mathscr{P}$, the functor $f^*$ has a left adjoint, denoted by $f_\sharp$.
    \item[(B--3)] For any Cartesian diagram
    \[\begin{tikzcd}
      X'\arrow[d,"f'"]\arrow[r,"g'"]&X\arrow[d,"f"]\\
      S'\arrow[r,"g"]&S
    \end{tikzcd}\]
    in $\mathcal{C}$ with $f\in \mathscr{P}$, the natural transformation
    \begin{equation}\label{3.2.1}
      Ex:f_\sharp g^*\longrightarrow g'^*f_\sharp'
    \end{equation}
    given by the composition
    \[f_\sharp g^*\stackrel{ad}\longrightarrow f_\sharp g^*f'^*f_\sharp'\stackrel{\sim}\longrightarrow f_\sharp f^*g'^*f_\sharp'\stackrel{ad'}\longrightarrow g'^*f_\sharp'\]
    is an isomorphism. The natural transformation (\ref{3.2.1}) is called an {\it exchange transformation}.
    \item[(B--4)] For any morphism $f:X\rightarrow S$ in $\mathscr{P}$, and for any objects $K$ of $H(X)$ and $L$ of $H(S)$, the natural transformation
    \begin{equation}\label{3.2.2}
      Ex:f_\sharp(K\otimes_X f^*L)\longrightarrow f_\sharp K\otimes_S L
    \end{equation}
    given by the composition
    \[f_\sharp(K\otimes_X f^*L)\stackrel{ad}\longrightarrow f_\sharp (f^*f_\sharp K\otimes_X f^*L)\stackrel{\sim}\longrightarrow f_\sharp f^*(f_\sharp K\otimes_S L)\stackrel{ad'}\longrightarrow f_\sharp K\otimes_S L\]
    is an isomorphism. Here, the second arrow is obtained by the fact that $f^*$ is monoidal. The natural transformation (\ref{3.2.2}) is called an {\it exchange transformation}.
  \end{enumerate}
\end{df}
\begin{rmk}\label{3.10}
  In (\ref{3.2}), note that if $f$ is an isomorphism, then the axioms from (B--1) to (B--4) for $f$ is always satisfied.
\end{rmk}
\begin{df}\label{3.8}
  Under the notations and hypotheses of (\ref{3.2}), assume that $H$ is $\mathscr{P}$-premotivic. For any object $S$ of $\mathcal{C}$, we denote by $1_S$ the unit of $H(S)$ obtained by the monoidal structure of $H(S)$.
\end{df}
\begin{df}\label{3.5}
  Let $\mathcal{C}$ be a category with fiber products, and let $\mathscr{P}$ be a class of morphisms of $\mathcal{C}$ containing all isomorphisms and stable by compositions and pullbacks, and let $H,H':\mathcal{C}\rightarrow {\rm Tri}^\otimes$ be $\mathscr{P}$-premotivic pseudofunctors. We say that a pseudonatural transformation
  \[\alpha:H\rightarrow H'\]
  is $\mathscr{P}$-{\it premotivic} if it satisfies the following axioms.
  \begin{enumerate}
    \item[(C--1)] For any object $S_1$ of $\mathcal{C}$, the functor
    \[\alpha(S_1):H(S_1)\rightarrow H'(S_1)\]
    has a right adjoint.
    \item[(C--2)] For any morphism $f_1:S_2\rightarrow S_1$ in $\mathscr{P}$, the natural transformation
    \[Ex:f_{1\sharp}\alpha(S_2)\stackrel{ad}\longrightarrow f_{1\sharp}\alpha(S_2)f_1^*f_{1\sharp}\stackrel{\alpha_{f_1}}\longrightarrow f_{1\sharp}f_1^*\alpha(S_1)f_{1\sharp}\stackrel{ad'}\longrightarrow \alpha(S_1)f_{1\sharp}\]
    is an isomorphism.
  \end{enumerate}
\end{df}
\begin{rmk}\label{3.12}
  In (\ref{3.5}), note that if $f_1$ is an isomorphism, then the axiom (C--2) for $f_1$ is always satisfied.
\end{rmk}
\begin{prop}\label{3.9}
  Under the notations and hypotheses of {\rm (\ref{3.5})}, if $\alpha(S_1)$ is an isomorphism for any object $S_1$ of $\mathcal{C}$, then $H$ is $\mathscr{P}$-premotivic.
\end{prop}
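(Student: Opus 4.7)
My approach is to transport the $\mathscr{P}$-premotivic structure from $H'$ to $H$ along the pseudonatural equivalence $\alpha$, using the fact that each $\alpha(S_1)\colon H(S_1)\to H'(S_1)$ is an equivalence of symmetric monoidal triangulated categories together with the invertible coherence $2$-morphisms $\alpha_{f_1}\colon\alpha(S_2)H(f_1)\stackrel{\sim}\to H'(f_1)\alpha(S_1)$. Since equivalences preserve the existence of adjoints and turn isomorphisms into isomorphisms, the verification of axioms (B--1)--(B--4) for $H$ reduces formally to the same axioms for $H'$.

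First I would verify (B--1) and (B--2). For each $S_1\in\mathcal{C}$, fix a quasi-inverse $\beta(S_1)$ to $\alpha(S_1)$ (together with unit and counit isomorphisms). Given $f_1\colon S_2\to S_1$ in $\mathcal{C}$, I set $f_{1*}:=\beta(S_1)\circ f_{1*}'\circ\alpha(S_2)$, where $f_{1*}'$ is the right adjoint of $H'(f_1)$ provided by (B--1) for $H'$. The isomorphism $\alpha_{f_1}$ combined with the equivalence $\alpha(S_1)\beta(S_1)\simeq{\rm id}$ shows that this is a right adjoint to $H(f_1)$, establishing (B--1). For $f_1\in\mathscr{P}$, the analogous definition $f_{1\sharp}:=\beta(S_1)\circ f_{1\sharp}'\circ\alpha(S_2)$, built from the left adjoint provided by (B--2) for $H'$, gives (B--2) for $H$.

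Next I would verify (B--3) and (B--4). Given a Cartesian square with $f\in\mathscr{P}$, the exchange transformation $Ex\colon f_\sharp g^*\to g'^*f'_\sharp$ for $H$ is defined via the unit and counit of the new adjunctions $f_\sharp\dashv f^*$ and $f'_\sharp\dashv f'^*$, which are built from the $H'$-adjunctions together with $\alpha,\beta$ and the coherence $2$-morphisms $\alpha_{f},\alpha_{g},\alpha_{f'},\alpha_{g'}$. A straightforward (if tedious) diagram chase identifies this exchange transformation, up to conjugation by the invertible $2$-cells $\alpha_{(-)}$ and the equivalences $\alpha(-),\beta(-)$, with the exchange transformation for $H'$, which is an isomorphism by (B--3) for $H'$. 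The same pattern handles (B--4): the tensor product on $H(S)$ is (by definition of ${\rm Tri}^\otimes$) transported from $H'(S)$ via $\alpha(S)$, so the projection formula exchange for $H$ is conjugate to that for $H'$ through invertible data, hence an isomorphism.

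The main obstacle is not conceptual but bookkeeping: making the coherence chase in the last step explicit enough that one sees the exchange transformation for $H$ and that for $H'$ are linked by a composition of invertible $2$-morphisms. This is purely formal once one unpacks the definitions of the adjunction units/counits on $H$ in terms of $\alpha,\beta$, and the axioms (i) and (ii) in the definition of pseudonatural transformation guarantee all the needed compatibilities. No further input from $H$ itself is required; the whole statement is a general transport-of-structure argument along a pseudonatural equivalence in ${\rm Tri}^\otimes$.
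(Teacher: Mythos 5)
There is a mismatch between what you have proved and what the proposition is actually asserting, and it stems from taking the statement's wording at face value without noticing that, read literally, it is vacuous. The ``notations and hypotheses of (\ref{3.5})'' already include that both $H$ and $H'$ are $\mathscr{P}$-premotivic pseudofunctors; indeed the axioms (C--1) and (C--2) in (\ref{3.5}) cannot even be stated unless $f_{1\sharp}$ exists for both $H$ and $H'$. So ``$H$ is $\mathscr{P}$-premotivic'' is a hypothesis, not a conclusion, and the intended conclusion must be that the pseudonatural transformation $\alpha$ (or equivalently its quasi-inverse) is $\mathscr{P}$-premotivic. This is confirmed by the remark (\ref{3.14}), which cites (\ref{3.9}) to conclude that $\kappa$ is $Sm$-premotivic, and by the paper's own proof, which verifies precisely (C--1) and (C--2) for $\alpha$: it observes that the quasi-inverse $\beta(S_1)$ is a right adjoint of $\alpha(S_1)$, and then shows that the exchange transformation $f_{1\sharp}\alpha(S_2)\to\alpha(S_1)f_{1\sharp}$ of (C--2) is an isomorphism because it is obtained by conjugating, by the invertible $\alpha(S_1)$ and $\alpha(S_2)$, the mate $\beta(S_1)f_{1\sharp}\to f_{1\sharp}\beta(S_2)$ of the invertible $\alpha_{f_1}$.

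Your proof, by contrast, is a transport-of-structure argument that rebuilds right/left adjoints and exchange isomorphisms for $H$ out of those for $H'$. That argument is formally sound, but it addresses a claim that holds by assumption here (and is, in any case, closer in spirit to (\ref{3.3}) than to (\ref{3.9})). Nowhere do you verify (C--1) or (C--2) for $\alpha$, which is the actual content of the proposition and what is used downstream. The missing idea is the one the paper uses: since $\alpha(S_1)$ and $\alpha(S_2)$ are invertible, $\beta(S_1)$ serves as a right adjoint of $\alpha(S_1)$ (giving (C--1)), and the exchange transformation in (C--2) is, up to pre- and post-composition with these invertible $1$-morphisms, the total mate of the invertible $2$-cell $\alpha_{f_1}$, hence an isomorphism. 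You should rework the proof around this observation rather than the transport-of-structure argument.
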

\begin{proof}
  For any object $S_1$ of $\mathcal{C}$, let $\beta(S_1)$ denote the inverse of $\alpha(S_1)$. Then $\beta(S_1)$ is both the right adjoint and left adjoint of $\alpha(S_1)$, so (C--1) is satisfied.

  Let $f_1:S_2\rightarrow S_1$ be a morphism in $\mathcal{P}$. The left adjoint of the isomorphism
  \[\alpha_{f_1}:\alpha(S_2)f_1^*\longrightarrow f_1^*\alpha(S_1)\]
  is the composition
  \[\begin{split}
    \beta(S_1)f_{1\sharp}&\stackrel{ad}\longrightarrow \beta(S_1)f_{1\sharp}\alpha(S_2)\beta(S_2)\stackrel{ad}\longrightarrow \beta(S_1)f_{1\sharp}\alpha(S_2)f_1^*f_1{\sharp}\beta(S_2)\\
     &\stackrel{\alpha_{f_1}}\longrightarrow \beta(S_1)f_{1\sharp}f_1^*\alpha(S_1)f_{1\sharp}\beta(S_2)\stackrel{ad'}\longrightarrow \beta(S_1)\alpha(S_1)f_{1\sharp}\beta(S_2) \stackrel{ad'}\longrightarrow f_{1\sharp}\beta(S_2),
  \end{split}\]
  which is also an isomorphism. The first and fifth arrows are isomorphisms since $\beta(S_1)$ and $\beta(S_2)$ are the inverses of $\alpha(S_1)$ and $\alpha(S_2)$ respectively. Thus the composition of the second, third, and fourth arrows is an isomorphism, applying $\alpha(S_1)$ to the left and $\alpha(S_2)$ to the right of the composition, we get the natural transformation in the axiom (C--2). Thus $H$ satisfies (C--2).
\end{proof}
\begin{rmk}\label{3.14}
  For example, the pseudonatural equivalence $\kappa$ in (\ref{0.4}) is $Sm$-premotivic.
\end{rmk}
\begin{prop}\label{3.3}
  Let $\mathcal{C}$ and $\mathcal{D}$ be categories with fiber product, and let $\mathscr{P}$ be a class of morphisms in $\mathcal{C}$ containing all isomorphisms and stable by compositions and pullback. Consider a commutative diagram
  \[\begin{tikzcd}
    \mathcal{C}\arrow[rd,"G"']\arrow[rrd,phantom,"{\rotatebox[origin=c]{0}{$\Leftrightarrow$}}_{\alpha}"]\arrow[rr,"F"]&&\mathcal{D}\arrow[ld,"H"]\\
    &{\rm Tri}^{\otimes}&\;
  \end{tikzcd}\]
  of $2$-categories where
  \begin{enumerate}[{\rm (i)}]
    \item $F$ is a covariant functor preserving fiber products,
    \item $G$ and $H$ are contravariant pseudofunctors,
    \item $\alpha$ is a pseudonatural equivalence.
  \end{enumerate}
  Then $G$ is $\mathscr{P}$-premotivic if and only if $H$ is $F(\mathscr{P})$-premotivic.
\end{prop}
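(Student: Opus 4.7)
The plan is to exploit the fact that a pseudonatural equivalence transports all $2$-categorical data — adjoints, exchange transformations, and their compatibilities — coherently along $\alpha$. Since a chosen quasi-inverse $\beta:H\circ F\rightarrow G$ of $\alpha$ is again a pseudonatural equivalence, the two directions of the ``if and only if'' are formally dual. I would therefore prove only ``$G$ is $\mathscr{P}$-premotivic $\Rightarrow$ $H$ is $F(\mathscr{P})$-premotivic'' and obtain the converse by running the same argument with the roles of $(\alpha,G)$ and $(\beta,H\circ F)$ interchanged.

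First, for each object $S$ of $\mathcal{C}$, I would fix a quasi-inverse $\beta(S)$ of $\alpha(S)$ with chosen unit and counit $2$-morphisms. Combined with the coherence isomorphisms $\alpha_{f}:\alpha(S_2)G(f)\stackrel{\sim}{\longrightarrow} H(F(f))\alpha(S_1)$, this produces, for every morphism $f:S_2\rightarrow S_1$ in $\mathcal{C}$, an invertible $2$-morphism
\[
  H(F(f))\;\stackrel{\sim}{\longrightarrow}\;\alpha(S_2)\circ G(f)\circ \beta(S_1),
\]
coherent with compositions (by axiom (ii) for $\alpha$). Since $F$ preserves fiber products, Cartesian squares in $\mathcal{C}$ with a leg in $\mathscr{P}$ transport to Cartesian squares in $\mathcal{D}$ with the corresponding leg in $F(\mathscr{P})$, so the domains of the axioms match up on the nose.

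Axioms (B--1) and (B--2) are then immediate by conjugation: if $G(f)_*$ (resp. $G(f)_\sharp$) is a right (resp. left) adjoint of $G(f)$, then $\alpha(S_1)\,G(f)_*\,\beta(S_2)$ (resp. $\alpha(S_1)\,G(f)_\sharp\,\beta(S_2)$) is a right (resp. left) adjoint of $H(F(f))$, with units and counits inherited from those of $G$ and the chosen $\alpha\dashv\beta$. For (B--3) and (B--4), the key step is to recognize that the exchange transformation (\ref{3.2.1}) (resp. (\ref{3.2.2})) for $H$ is identified, via the isomorphisms above, with the exchange transformation for $G$ conjugated by $\alpha$ and $\beta$; invertibility thus transfers. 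For the tensor-exchange in (B--4) I would additionally use that each $\alpha(S)$ is symmetric monoidal, which is built into it being a $1$-morphism of ${\rm Tri}^\otimes$.

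The main obstacle is the pasting bookkeeping in Step (B--3)--(B--4): one must unfold the four-step composition defining each exchange transformation, insert appropriate units and counits of $\alpha(S)\dashv\beta(S)$, and invoke the pseudonaturality axiom (ii) of $\alpha$ to align the resulting diagram with the conjugate of the exchange transformation on $G$. This is purely formal $2$-categorical diagram-chasing but requires care so that the triangle identities and the $\alpha_f$ coherences are applied in the correct order. Once this matching is established, invertibility of the exchange transformations on the $G$-side immediately forces invertibility on the $H$-side, completing one direction, and the converse follows by the same argument applied to $\beta$.
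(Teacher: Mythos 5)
Your proposal is mathematically reasonable but takes a genuinely different route from the paper's proof. The paper dispatches this in one line: it invokes Remark (\ref{3.10}) (the axioms (B--1)--(B--4) hold automatically at any isomorphism) together with the observation that every morphism in $F(\mathscr{P})$ is a composition of isomorphisms and morphisms of the form $F(f)$ with $f\in\mathscr{P}$; the transport of the axioms from $G$ at $f$ to $H$ at $F(f)$ along $\alpha$ is left entirely implicit. You, by contrast, make that transport explicit: fixing a quasi-inverse $\beta$, conjugating adjoints and exchange transformations along $\alpha(S)\dashv\beta(S)$, and using the pseudonaturality coherences $\alpha_f$ to match the two sides. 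That is precisely the piece the paper sweeps under the rug, so your elaboration supplies real content, and the conjugation mechanism you sketch is the correct one.

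The one thing missing from your argument is the part the paper \emph{does} make explicit. Your conjugation establishes the axioms for $H$ at morphisms of the form $F(f)$; but $F(\mathscr{P})$ must contain all isomorphisms of $\mathcal{D}$ and be stable by composition, and $F$ need not be full, so not every $F(\mathscr{P})$-morphism is of this form. Your remark that ``the domains of the axioms match up on the nose'' glosses over this. To cover the rest of $F(\mathscr{P})$ you still need exactly the two ingredients the paper cites: Remark (\ref{3.10}) for isomorphisms, and the (easy but worth stating) stability of (B--1)--(B--4) under composition of morphisms, so that you can then invoke the decomposition of $F(\mathscr{P})$-morphisms into isomorphisms and $F(f)$'s. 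Both additions are routine, so this is a small omission rather than a wrong approach, but as written your proof does not reach all of $F(\mathscr{P})$.
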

\begin{proof}
  It follows from (\ref{3.10}) since every morphism in $F(\mathscr{P})$ is a composition of isomorphisms and morphisms of the form $F(f)$ for $f\in \mathscr{P}$.
\end{proof}
\begin{prop}\label{3.13}
  Under the notations and hypotheses of {\rm (\ref{3.3})}, consider a commutative diagram
  \[\begin{tikzcd}
    \mathcal{C}\arrow[rd,"G'"']\arrow[rrd,phantom,"{\rotatebox[origin=c]{0}{$\Leftrightarrow$}}_{\alpha'}"]\arrow[rr,"F"]&&\mathcal{D}\arrow[ld,"H'"]\\
    &{\rm Tri}^{\otimes}&\;
  \end{tikzcd}\]
  of $2$-categories and a diagram
  \[\begin{tikzcd}
    G\arrow[r,"\alpha"]\arrow[d,"\beta"]&H\circ F\\
    G'\arrow[r,"\alpha'"]&H'\circ F
  \end{tikzcd}\]
  of pseudofunctors where
  \begin{enumerate}[(i)]
    \item $G'$ is a $\mathscr{P}$-premotivic pseudofunctor and $H'$ are contravariant pseudofunctors,
    \item $\alpha'$ is a pseudonatural equivalence.
    \item $\beta$ is a $\mathscr{P}$-premotivic pseudonatural transformation.
  \end{enumerate}
  If $\gamma:H\rightarrow H'$ is a pseudonatural transformation such that the induced pseudonatural transformation $H\circ F\rightarrow H'\circ F$ makes the above diagram commutative, then $\gamma$ is $F(\mathscr{P})$-premotivic.
\end{prop}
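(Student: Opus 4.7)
The plan is to check that $\gamma$ satisfies axioms (C--1) and (C--2) of Definition \ref{3.5} relative to the class $F(\mathscr{P})$. By Remark \ref{3.12} and the description of $F(\mathscr{P})$ from the proof of Proposition \ref{3.3}, the substantive cases are (C--1) at objects of the form $F(X)$ and (C--2) at morphisms of the form $F(g)$ with $g \in \mathscr{P}$.

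A preliminary observation is that the pseudonatural equivalences $\alpha$ and $\alpha'$ each satisfy the appropriate analog of (C--2): for every $g : X \to Y$ in $\mathscr{P}$, the exchange transformations
\[F(g)_\sharp \alpha(X) \to \alpha(Y) g_\sharp, \qquad F(g)_\sharp \alpha'(X) \to \alpha'(Y) g_\sharp\]
are invertible. This follows by the same idea as in the proof of Proposition \ref{3.9}: starting from the invertible pseudonaturality $2$-cell $\alpha_g : \alpha(X) g^* \cong F(g)^* \alpha(Y)$, whiskering by pseudoinverses $\alpha(X)^{-1}, \alpha(Y)^{-1}$ and passing to left adjoints produces the claimed invertible exchange cell, and analogously for $\alpha'$.

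For (C--1) at $F(X)$, the commutativity of the square at $X$ gives an invertible $2$-morphism $\gamma(F(X)) \circ \alpha(X) \cong \alpha'(X) \circ \beta(X)$, which rearranges to $\gamma(F(X)) \cong \alpha'(X) \circ \beta(X) \circ \alpha(X)^{-1}$. Since $\beta(X)$ admits a right adjoint by (C--1) for the $\mathscr{P}$-premotivic $\beta$, and the equivalences $\alpha'(X)$ and $\alpha(X)^{-1}$ also admit right adjoints, so does the composite. For (C--2) at $F(g)$, I would paste together the commutativity cells at $X$ and $Y$ with the exchange cells for $\alpha$ and $\alpha'$ at $g$ furnished by the preliminary observation. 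The resulting commutative diagram identifies the exchange transformation for $\gamma$ at $F(g)$, whiskered by appropriate equivalences, with the exchange transformation for $\beta$ at $g$ whiskered likewise. The latter is invertible by (C--2) for $\beta$, so the former is as well.

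The main obstacle is the coherent pasting in the (C--2) step. The exchange transformations are built from composites involving units and counits of the adjunctions $g_\sharp \dashv g^*$ and $F(g)_\sharp \dashv F(g)^*$ together with pseudonaturality cells, and identifying two such composites up to whiskering by equivalences requires carefully organizing all the $2$-cells using the pseudofunctor coherence axioms, the axioms for pseudonatural transformations in Definition \ref{3.11}, and the triangle identities of the adjunctions. Once the preliminary observation on $\alpha, \alpha'$ is in place, however, this reduces to a routine but tedious verification.
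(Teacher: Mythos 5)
Your proof is correct and takes essentially the same approach as the paper's, which is a one-sentence appeal to Remark \ref{3.12} together with the factorization of $F(\mathscr{P})$-morphisms into isomorphisms and images $F(f)$ with $f \in \mathscr{P}$. Your proposal spells out the details that the paper leaves implicit: the preliminary observation on $\alpha$ and $\alpha'$ is essentially the content of Proposition \ref{3.9} and Remark \ref{3.14}, and the pasting argument is the implicit reduction of (C--2) at $F(g)$ to (C--2) for $\beta$ at $g$ via the commutativity of the square.
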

\begin{proof}
  It follows from (\ref{3.12}) since every morphism in $F(\mathscr{P})$ is a composition of isomorphisms and morphisms of the form $F(f)$ for $f\in \mathscr{P}$.
\end{proof}
\section{Proof of (\ref{0.4}), part I}
\begin{none}\label{1.1}
  Throughout this section, we fix a diagram
  \begin{equation}\label{1.1.1}\begin{tikzcd}
    \widetilde{\mathcal{C}}\arrow[d,"G"']\arrow[r,"F"]&\mathcal{C}\\
    \mathcal{D}
  \end{tikzcd}\end{equation}
  where
  \begin{enumerate}[(i)]
    \item $\mathcal{C}$ and $\widetilde{\mathcal{C}}$ are categories, and $\mathcal{D}$ is a $2$-category,
    \item $F$ is an essentially surjective covariant functor, and $G$ is a contravariant pseudofunctor.
  \end{enumerate}
  We denote by $\mathscr{W}$ the class of morphisms in $\widetilde{\mathcal{C}}$ such that $f\in \mathscr{W}$ if and only if $F(f)$ is an isomorphism. Assume that for any morphism $f$ in $\mathscr{W}$, $f^*$ is invertible.

  Consider the category $\mathcal{C}'$ where ${\rm ob}\,\mathcal{C}':={\rm ob}\,\widetilde{\mathcal{C}}$ and
  \[{\rm Hom}_{\mathcal{C}'}(X_1,X_2)={\rm Hom}_{\mathcal{C}}(F(X_1),F(X_2))\]
  for any $X_1,X_2\in {\rm ob}\,\mathcal{C}'$. Then $\mathcal{C}'$ is equivalent to $\mathcal{C}$ since $F$ is essentially surjective, so it is not harmful to consider $\mathcal{C}'$ instead of $\mathcal{C}$. Thus we may assume ${\rm ob}\,\mathcal{C}:={\rm ob}\,\widetilde{\mathcal{C}}$.

  Under this assumption, for any object $X$ of $\mathcal{C}$, we denote by $\widetilde{X}$ the corresponding object in $\widetilde{C}$ to avoid confusion.
\end{none}
\begin{none}\label{1.2}
  We will study when the diagram (\ref{1.1.1}) can be extended to a commutative diagram
  \begin{equation}\label{1.2.1}\begin{tikzcd}
    \widetilde{\mathcal{C}}\arrow[d,"G"']\arrow[r,"F"]\arrow[rd,phantom,"{\rotatebox[origin=c]{0}{$\Leftrightarrow$}}_{\alpha}",very near start]&\mathcal{C}\arrow[ld,"H"]\\
    \mathcal{D}&\,
  \end{tikzcd}\end{equation}
  of $2$-categories where $H$ is a contravariant pseudofunctor and $\alpha:G\rightarrow H\circ F$ is a pseudonatural equivalence. Consider the following conditions for $G$.
  \begin{enumerate}
    \item[(A--1)] For any morphism $X_2\stackrel{f_1}\rightarrow X_1$ in $\mathcal{C}$, we can choose a diagram
    \[\begin{tikzcd}
      &\widetilde{X_{12}}\arrow[ld,"p_2^{12}"']\arrow[rd,"p_1^{12}"]\\
      \widetilde{X_2}&&\widetilde{X_1}
    \end{tikzcd}\]
    in $\widetilde{\mathcal{C}}$ such that $p_2^{12}$ is in $\mathscr{W}$ and $f_1=F(p_1^{12})(F(p_2^{12}))^{-1}$. Moreover, when $f_1=F(\widetilde{f_1})$ for some morphism $\widetilde{f_1}:\widetilde{X_2}\rightarrow \widetilde{X_1}$ in $\widetilde{\mathcal{C}}$, we require that $p_2^{12}$ has a section $d_{12}^2$ such that $p_1^{12}d_{12}^2=\widetilde{f_1}$. Here, $\widetilde{X_1}=X_1$ and $\widetilde{X_2}=X_2$, but we distinguish the notations following (\ref{1.1}) to avoid confusion.
    \item[(A--2)] Assume (A--1). For any morphisms $X_3\stackrel{f_2}\rightarrow X_2\stackrel{f_1}\rightarrow X_1$ in $\mathcal{C}$, we can choose a commutative diagram
    \[\begin{tikzcd}
      &&\widetilde{X_{123}}\arrow[ld,"p_{23}^{123}"']\arrow[d,"p_{13}^{123}"]\arrow[rd,"p_{12}^{123}"]\\
      &\widetilde{X_{23}}\arrow[ld,"p_3^{23}"']\arrow[rd,"p_2^{23}"',near end]&\widetilde{X_{13}}\arrow[lld,crossing over,"p_3^{13}",near end]&\widetilde{X_{12}}\arrow[ld,"p_2^{12}",near end]\arrow[rd,"p_1^{12}"]\\
      \widetilde{X_3}&&\widetilde{X_2}&&\widetilde{X_1}\arrow[llu,"p_1^{13}",near start,leftarrow,crossing over]
    \end{tikzcd}\]
    such that $p_{13}^{123}$ and $p_{23}^{123}$ are in $\mathscr{W}$. Here, $\widetilde{X_{12}},\widetilde{X_{13}},\widetilde{X_{23}},p_1^{12},\ldots,p_3^{23}$ are already chosen from (A--1). Moreover, when $f_1=F(\widetilde{f_1})$ (resp.\ $f_2=F(\widetilde{f_2})$) for some morphism $\widetilde{f_1}:\widetilde{X_2}\rightarrow \widetilde{X_1}$ (resp.\ $\widetilde{f_2}:\widetilde{X_3}\rightarrow \widetilde{X_2}$) in $\widetilde{\mathcal{C}}$, we require that $p_{23}^{123}$ (resp.\ $p_{13}^{123}$) has a section $d_{123}^{23}$ (resp.\ $d_{123}^{13}$ ) such that $p_{12}^{123}d_{123}^{23}=d_{12}^2p_2^{23}$ (resp.\ $p_{23}^{123}d_{123}^{13}=d_{23}^3p_3^{13}$). Here, $d_{12}^2$ and $d_{23}^3$ are defined in (A--1).
    \item[(A--3)] Assume (A--1) and (A--2). For any morphisms $X_4\stackrel{f_3}\rightarrow X_3\stackrel{f_2}\rightarrow X_2\stackrel{f_1}\rightarrow X_1$ in $\mathcal{C}$, we can choose a commutative diagram
    \[\begin{tikzcd}
      &&&\widetilde{X_{1234}}\arrow[lld,"p_{234}^{1234}"']\arrow[ld,"p_{134}^{1234}"]\arrow[rd,"p_{124}^{1234}"']\arrow[rrd,"p_{123}^{1234}"]\\
      &\widetilde{X_{234}}\arrow[ldd]\arrow[dd]\arrow[rrrdd]&\widetilde{X_{134}}\arrow[lldd]\arrow[dd]\arrow[rrrdd]&&\widetilde{X_{124}}\arrow[llldd]\arrow[lldd] \arrow[rrdd]&\widetilde{X_{123}} \arrow[ldd]\arrow[dd]\arrow[rdd]\\
      \\
      \widetilde{X_{34}}\arrow[rdd]\arrow[rrdd]&\widetilde{X_{24}}\arrow[dd]\arrow[rrrdd]&\widetilde{X_{14}}\arrow[ldd]\arrow[rrrdd] &&\widetilde{X_{23}}\arrow[lldd]\arrow[dd]&\widetilde{X_{13}}\arrow[llldd]\arrow[dd]&\widetilde{X_{12}}\arrow[lldd]\arrow[ldd]\\
      \\
      &\widetilde{X_4}&\widetilde{X_3}&&\widetilde{X_2}&\widetilde{X_1}
    \end{tikzcd}\]
    in $\widetilde{\mathcal{C}}$ such that $p_{124}^{1234}$, $p_{134}^{1234}$, and $p_{234}^{1234}$ are in $\mathscr{W}$. Here, $\widetilde{X_{12}},\ldots,\widetilde{X_{34}},\widetilde{X_{123}},\ldots,\widetilde{X_{234}}$ and not denoted arrows are already chosen from (A--1) and (A--2).
  \end{enumerate}
\end{none}
\begin{none}\label{1.3}
  Under the conditions from (A--1) to (A--3), we can construct a pseudofunctor $H$ as follows.
  \begin{enumerate}[(1)]
    \item For any object $X_1$ of $\mathcal{C}$, put
    \[H(X_1):=G(\widetilde{X_1}).\]
    \item For any morphism $f_1:X_2\rightarrow X_1$ in $\mathcal{C}$, we can fix a diagram
        \[\begin{tikzcd}
      &\widetilde{X_{12}}\arrow[ld,"p_2^{12}"']\arrow[rd,"p_1^{12}"]\\
      \widetilde{X_2}&&\widetilde{X_1}
    \end{tikzcd}\]
    in $\widetilde{\mathcal{C}}$ by (A--1). Then put
    \[H(f_1):=(p_2^{12*})^{-1}p_1^{12*}.\]
    \item Under the notations and hypotheses of (2), if $f_1$ is the identity morphism, we have the invertible $2$-morphism
    \[H_{f_1}:H(f_1)\longrightarrow {\rm id}\]
    given by
    \[(p_2^{12*})^{-1}p_1^{12*}\stackrel{\sim}\longrightarrow d_{12}^{2*}p_2^{12*}(p_2^{12*})^{-1}p_1^{12*} \stackrel{\sim}\longrightarrow d_{12}^{2*}p_1^{12*}\stackrel{\sim}\longrightarrow {\rm id}^*\stackrel{\sim}\longrightarrow {\rm id}.\]
    \item For any morphisms $f_2:X_3\rightarrow X_2$ and $f_1:X_2\rightarrow X_1$ in $\mathcal{C}$, we have the invertible $2$-morphism
    \[H_{f_1,f_2}:H(f_2)H(f_1)\longrightarrow H(f_1f_2)\]
    given by
    \[\begin{split}
      (p_3^{23*})^{-1}p_2^{23*}(p_2^{12*})^{-1}p_1^{12*}&\stackrel{\sim}\longrightarrow (p_3^{23*})^{-1}(p_{23}^{123*})^{-1}p_{12}^{123*}p_1^{12*} \stackrel{\sim}\longrightarrow (p_3^{123*})^{-1}p_1^{123*}\\
      &\stackrel{\sim}\longrightarrow (p_3^{13*})^{-1}(p_{13}^{123*})^{-1}p_{13}^{123*}p_1^{13*} \stackrel{\sim}\longrightarrow (p_3^{13*})^{-1}p_1^{13*}.
    \end{split}\]
    where $p_1^{123}=p_1^{12}p_{12}^{123}$ and $p_3^{123}=p_3^{23}p_{23}^{123}$. Here, the notations are from (A--2), and the first arrow is induced by the $2$-morphism
    \[p_{23}^{123*}p_2^{23*}\stackrel{\sim}\longrightarrow p_{12}^{123*}p_2^{12*}.\]
  \end{enumerate}
  We will verify the axioms of pseudofunctors for $H$ in (\ref{1.5}).
\end{none}
\begin{none}\label{1.4}
  Let us first study the functoriality of the condition (A--2). Assume the conditions from (A--1) to (A--3), and consider a commutative diagram
  \[\begin{tikzcd}
    &&&\widetilde{X_{123}}\arrow[lld,"p_{23}^{123}"',very near start]\arrow[rrd,"p_{12}^{123}",very near start]\arrow[rd,"p_{13}^{123}"']\arrow[ddd,"u_{123}"',near end]\\
    &\widetilde{X_{23}}\arrow[ddd,"u_{23}"]\arrow[ld,"p_3^{23}"']\arrow[rd,"p_2^{23}",near start]&&&\widetilde{X_{13}}\arrow[lllld,"p_3^{13}"',crossing over,near start]&\widetilde{X_{12}}\arrow[ddd,"u_{12}"]\arrow[llld,"p_2^{12}",crossing over] \arrow[rd,"p_1^{12}"]\\
    \widetilde{X_3}\arrow[ddd,"u_3"]&&\widetilde{X_2}&&&&\widetilde{X_1}\arrow[ddd,"u_1"]\arrow[llu,"p_1^{13}",crossing over,leftarrow]\\
    &&&\widetilde{Y_{123}}\arrow[lld,"q_{23}^{123}"',very near start]\arrow[rrd,"q_{12}^{123}",very near start]\arrow[rd,"q_{13}^{123}"']\\
    &\widetilde{Y_{23}}\arrow[ld,"q_3^{23}"']\arrow[rd,"q_2^{23}",near start]&&&\widetilde{Y_{13}}\arrow[uuu,"u_{13}",crossing over,leftarrow]\arrow[lllld,"q_3^{13}"',crossing over,near start]&\widetilde{Y_{12}}\arrow[llld,"q_2^{12}",crossing over] \arrow[rd,"q_1^{12}"]\\
    \widetilde{Y_3}&&\widetilde{Y_2}\arrow[uuu,"u_2",near end,leftarrow,crossing over]&&&&\widetilde{Y_1}\arrow[llu,"q_1^{13}",crossing over,leftarrow]\\
  \end{tikzcd}\]
  in $\widetilde{\mathcal{C}}$ such that $q_{2}^{12},q_{3}^{13},q_{3}^{23},q_{13}^{123},q_{23}^{123}$ are also in $\mathscr{W}$. Temporary put
  \[P(f_1):=(q_2^{12*})^{-1}q_1^{12*},\]
  \[P(f_2):=(q_3^{23*})^{-1}q_2^{23*},\]
  \[P(f_1f_2):=(q_3^{13*})^{-1}q_1^{13*}.\]
  We have the invertible $2$-morphism
  \[P_{f_1,f_2}:P(f_2)P(f_1)\longrightarrow P(f_1f_2)\]
  in $\mathcal{D}$ as in (\ref{1.3}), and we have the invertible $2$-morphism
  \[Ex:H(f_1)u_1^*\longrightarrow u_2^*P(f_1)\]
  in $\mathcal{D}$ given by
  \[(p_2^{12*})^{-1}p_1^{12*}u_1^*\stackrel{\sim}\longrightarrow (p_2^{12*})^{-1}u_{12}^*q_1^{12*}\stackrel{\sim}\longrightarrow u_2^*(q_2^{12*})^{-1}q_1^{12*}.\]
  Similarly, we have the invertible $2$-morphisms
  \[H(f_2)u_2^*\stackrel{Ex}\longrightarrow u_3^*P(f_2),\]
  \[H(f_1f_2)u_1^*\stackrel{Ex}\longrightarrow u_3^*P(f_1f_2).\]
  in $\mathcal{D}$. Now, we have the diagram
  \begin{equation}\label{1.4.1}\begin{tikzcd}
    H(f_2)H(f_1)u_1^*\arrow[r,"Ex"]\arrow[d,"H_{f_1,f_2}"]&H(f_2)u_2^*P(f_1)\arrow[r,"Ex"]&u_3^*P(f_2)P(f_1)\arrow[d,"P_{f_1,f_2}"]\\
    H(f_1f_2)u_1^*\arrow[rr,"Ex"]&&u_3^*P(f_1f_2)
  \end{tikzcd}\end{equation}
  in $\mathcal{D}$. It is a big outside diagram of the diagram
  \[\begin{tikzpicture}[baseline= (a).base]
    \node[scale=.62] (a) at (0,0)
    {\begin{tikzcd}
    (p_3^{23*})^{-1}p_2^{23*}(p_2^{12*})^{-1}p_1^{12*}u_1^*\arrow[r,"\sim"]\arrow[d,"\sim"]&(p_3^{23*})^{-1}(p_{23}^{123*})^{-1}p_{12}^{123*}p_1^{12*}u_1^*\arrow[r,"\sim"]\arrow[d,"\sim"] &(p_3^{123*})^{-1}p_1^{123*}u_1^*\arrow[r,"\sim"]\arrow[dd,"\sim"]&(p_3^{13*})^{-1}(p_{13}^{123*})^{-1}p_{13}^{123*}p_1^{13*}u_1^*\arrow[r,"\sim"]\arrow[d,"\sim"]& (p_3^{13*})^{-1} p_1^{13*}u_1^*\arrow[d,"\sim"]\\
    (p_3^{23*})^{-1}p_2^{23*}(p_2^{12*})^{-1}u_{12}^*q_1^{12*}\arrow[r,"\sim"]\arrow[d,"\sim"]&(p_3^{23*})^{-1}(p_{23}^{123*})^{-1}p_{12}^{123*}u_{12}^*q_1^{12*}\arrow[d,"\sim"]&& (p_3^{13*})^{-1}(p_{13}^{123*})^{-1}p_{13}^{123*}u_{13}^*q_1^*\arrow[r,"\sim"]\arrow[d,"\sim"]&(p_3^{13*})^{-1}u_{13}^*q_1^{13*}\arrow[dd,"{\rm id}"]\\
    (p_3^{23*})^{-1}p_2^{23*}u_2^*(q_2^{12*})^{-1}q_1^{12*}\arrow[d,"\sim"]& (p_3^{23*})^{-1}(p_{23}^{123*})^{-1}u_{123}^*q_{12}^{123*}q_1^{12*}\arrow[r,"\sim"]\arrow[d,"\sim"]& (p_3^{123*})^{-1}u_{123}^*q_1^{123*}\arrow[dd,"\sim"]\arrow[r,"\sim"]&(p_3^{13*})^{-1}(p_{13}^{123*})^{-1}u_{123}^*q_{13}^{123*}q_1^{13*}\arrow[d,"\sim"]\\
    (p_3^{23*})^{-1}u_{23}^*q_2^{23*}(q_2^{12*})^{-1}q_1^{12*}\arrow[r,"\sim"]\arrow[d,"\sim"]& (p_3^{23*})^{-1}u_{23}^*(q_{23}^{123*})^{-1}q_{12}^{123*}q_1^{12*} \arrow[d,"\sim"] && (p_3^{13*})^{-1}u_{13}^*(q_{13}^{123*})^{-1}q_{13}^{123*}q_1^{13*}\arrow[d,"\sim"]\arrow[r,"\sim"]& (p_3^{13*})^{-1}u_{13}^*q_1^{13*}\arrow[d,"\sim"]\\
    u_3^*(q_3^{23*})^{-1}q_2^{23*}(q_2^{12*})^{-1}q_1^{12*}\arrow[r,"\sim"]&  u_3^*(q_3^{23*})^{-1}(q_{23}^{123*})^{-1}q_{12}^{123*}q_1^{12*} \arrow[r,"\sim"]& u_3^*(q_3^{123*})^{-1}q_1^{123*}\arrow[r,"\sim"]& u_3^*(q_3^{13*})^{-1}(q_{13}^{123*})^{-1}q_{13}^{123*}q_1^{13*}\arrow[r,"\sim"]& u_3^*(q_3^{13*})^{-1}q_1^{13*}
  \end{tikzcd}};
  \end{tikzpicture}\]
  in $\mathcal{D}$, and each small square commutes since $G$ is a pseudofunctor. Thus (\ref{1.4.1}) commutes.
\end{none}
\begin{none}\label{1.5}
  Now, under the conditions from (A--1) to (A--3), we will show that $H$ constructed in (\ref{1.3}) satisfies the axioms of pseudofunctors.
  \begin{enumerate}[(1)]
    \item Let $X_3\stackrel{f_2}\rightarrow X_2\stackrel{f_1}\rightarrow X_1$ be morphisms in $\mathcal{C}$. When $f_1$ is the identity morphism, we have to show that the diagram
    \[\begin{tikzcd}
      H(f_2)H(f_1)\arrow[d,"H_{f_1,f_2}"]\arrow[r,"H_{f_1}"]&H(f_2)\arrow[d,equal]\\
      H(f_1f_2)\arrow[r,equal]&H(f_2)
    \end{tikzcd}\]
    in $\mathcal{D}$ commutes. It follows from applying (\ref{1.4}) to the commutative diagram
    \[\begin{tikzcd}
    &&&\widetilde{X_{13}}\arrow[lld,"{\rm id}"',very near start]\arrow[rrd,"p_{1}^{13}",very near start]\arrow[rd,"{\rm id}"']\arrow[ddd,"d_{123}^{13}"',near end]\\
    &\widetilde{X_{23}}\arrow[ddd,"{\rm id}"]\arrow[ld,"p_3^{23}"']\arrow[rd,"p_2^{23}",near start]&&&\widetilde{X_{13}}\arrow[lllld,"p_3^{13}"',crossing over,near start]&\widetilde{X_2}\arrow[ddd,"d_{12}^2"]\arrow[llld,"{\rm id}",crossing over] \arrow[rd,"{\rm id}"]\\
    \widetilde{X_3}\arrow[ddd,"{\rm id}"]&&\widetilde{X_2}&&&&\widetilde{X_1}\arrow[ddd,"{\rm id}"]\arrow[llu,"p_1^{13}",crossing over,leftarrow]\\
    &&&\widetilde{X_{123}}\arrow[lld,"p_{23}^{123}"',very near start]\arrow[rrd,"p_{12}^{123}",very near start]\arrow[rd,"p_{13}^{123}"']\\
    &\widetilde{X_{23}}\arrow[ld,"p_3^{23}"']\arrow[rd,"p_2^{23}",near start]&&&\widetilde{X_{13}}\arrow[uuu,"{\rm id}",crossing over,leftarrow]\arrow[lllld,"p_3^{13}"',crossing over,near start]&\widetilde{X_{12}}\arrow[llld,"p_2^{12}",crossing over] \arrow[rd,"p_1^{12}"]\\
    \widetilde{X_3}&&\widetilde{X_2}\arrow[uuu,"{\rm id}",near end,leftarrow,crossing over]&&&&\widetilde{X_1}\arrow[llu,"p_1^{13}",crossing over,leftarrow]\\
    \end{tikzcd}\]
    in $\widetilde{\mathcal{C}}$.
    \item Let $X_3\stackrel{f_2}\rightarrow X_2\stackrel{f_1}\rightarrow X_1$ be morphisms in $\mathcal{C}$. When $f_2$ is the identity morphism, as in (1), the diagram
    \[\begin{tikzcd}
      H(f_2)H(f_1)\arrow[d,"H_{f_1,f_2}"]\arrow[r,"H_{f_2}"]&H(f_1)\arrow[d,equal]\\
      H(f_1f_2)\arrow[r,equal]&H(f_1)
    \end{tikzcd}\]
    in $\mathcal{D}$ commutes.
    \item Let $X_4\stackrel{f_3}\rightarrow X_3\stackrel{f_2}\rightarrow X_2\stackrel{f_1}\rightarrow X_1$ be morphisms in $\mathcal{C}$. Consider the commutative diagram
    \[\begin{tikzpicture}[baseline= (a).base]
      \node[scale=.95] (a) at (0,0)
      {\begin{tikzcd}
      &&&\widetilde{X_{1234}}\arrow[lld]\arrow[ld]\arrow[rd]\arrow[rrd]\arrow[ddddddr]\\
      &\widetilde{X_{1234}}\arrow[ddddddr]\arrow[ldd]\arrow[dd]\arrow[rrrdd]&\widetilde{X_{1234}}\arrow[ddddddr] \arrow[lldd]\arrow[dd]\arrow[rrrdd]&&\widetilde{X_{1234}}\arrow[ddddddr]\arrow[llldd]\arrow[lldd] \arrow[rrdd]&\widetilde{X_{123}} \arrow[ddddddr]\arrow[ldd]\arrow[dd]\arrow[rdd]\\
      \\
      \widetilde{X_{1234}}\arrow[ddddddr]\arrow[rdd]\arrow[rrdd]&\widetilde{X_{1234}}\arrow[ddddddr]\arrow[dd]\arrow[rrrdd]&\widetilde{X_{1234}}\arrow[ddddddr]\arrow[ldd]\arrow[rrrdd] &&\widetilde{X_{123}}\arrow[ddddddr]\arrow[lldd]\arrow[dd]&\widetilde{X_{123}}\arrow[ddddddr]\arrow[llldd]\arrow[dd]&\widetilde{X_{12}}\arrow[ddddddr]\arrow[lldd]\arrow[ldd]\\
      \\
      &\widetilde{X_4}\arrow[ddddddr]&\widetilde{X_3}\arrow[ddddddr]&&\widetilde{X_2}\arrow[ddddddr]&\widetilde{X_1}\arrow[ddddddr]\\
      &&&&\widetilde{X_{1234}}\arrow[lld]\arrow[ld]\arrow[rd]\arrow[rrd]\\
      &&\widetilde{X_{234}}\arrow[ldd]\arrow[dd]\arrow[rrrdd]&\widetilde{X_{134}}\arrow[lldd]\arrow[dd]\arrow[rrrdd]&&\widetilde{X_{124}}\arrow[llldd]\arrow[lldd] \arrow[rrdd]&\widetilde{X_{123}} \arrow[ldd]\arrow[dd]\arrow[rdd]\\
      \\
      &\widetilde{X_{34}}\arrow[rdd]\arrow[rrdd]&\widetilde{X_{24}}\arrow[dd]\arrow[rrrdd]&\widetilde{X_{14}}\arrow[ldd]\arrow[rrrdd] &&\widetilde{X_{23}}\arrow[lldd]\arrow[dd]&\widetilde{X_{13}}\arrow[llldd]\arrow[dd]&\widetilde{X_{12}}\arrow[lldd]\arrow[ldd]\\
      \\
      &&\widetilde{X_4}&\widetilde{X_3}&&\widetilde{X_2}&\widetilde{X_1}
      \end{tikzcd}};
    \end{tikzpicture}\]
    in $\mathcal{C}$. Here, each arrow is the identity morphism or a composition of morphisms in the diagram in (A--3). Let
    \[Q(f_1),Q(f_2),Q(f_3),Q(f_1f_2),Q(f_1f_3),Q(f_1f_2f_3),Q_{f_1,f_2},Q_{f_2,f_3}.Q_{f_1f_2,f_3},Q_{f_1,f_2f_3}\]
    denote the $1$-morphisms and $2$-morphisms obtained as in (\ref{1.3}) using the upper part of the above diagram instead of the diagram in (A--3). As in (\ref{1.4}), we also have the invertible $2$-morphisms
    \[Q(f_1)\stackrel{Ex}\longrightarrow H(f_1),\quad Q(f_2)\stackrel{Ex}\longrightarrow H(f_2),\quad Q(f_3)\stackrel{Ex}\longrightarrow H(f_3),\]
    \[Q(f_1f_2)\stackrel{Ex}\longrightarrow H(f_1f_2),\quad Q(f_2f_3)\stackrel{Ex}\longrightarrow H(f_2f_3),\quad Q(f_1f_2f_3)\stackrel{Ex}\longrightarrow H(f_1f_2f_3)\]
    in $\mathcal{D}$. Now applying (loc.\ cit), we see that in the diagram
    \[\begin{tikzcd}
      Q(f_3)Q(f_2)Q(f_1)\arrow[rd,"Ex"]\arrow[rr,"Q_{f_1,f_2}"]\arrow[dd,"Q_{f_2,f_3}"]&&Q(f_3)Q(f_1f_2)\arrow[rd,"Ex"]\arrow[dd,"Q_{f_1f_2,f_3}",near end]\arrow[rd,"Ex"]\\
      &H(f_3)H(f_2)H(f_1)\arrow[rr,"H_{f_1,f_2}",near start,crossing over]&&H(f_3)H(f_1f_2)\arrow[dd,"H_{f_1f_2,f_3}"]\\
      Q(f_2f_3)Q(f_1)\arrow[rr,"Q_{f_1,f_2f_3}",near end]\arrow[rd,"Ex"]&&Q(f_1f_2f_3)\arrow[rd,"Ex"]\\
      &H(f_2f_3)H(f_1)\arrow[uu,"H_{f_2,f_3}"',leftarrow,crossing over,near start]\arrow[rr,"H_{f_1,f_2f_3}"]&&H(f_1f_2f_3)
    \end{tikzcd}\]
    in $\mathcal{D}$, the small squares other than the front side and back side squares commute. Since the purpose is to show that the front side square commutes, we only need to show that the back side square commutes. Put
    \[p_2^{123}=p_2^{12}p_{12}^{123},\quad p_3^{123}=p_3^{23}p_{23}^{123},\quad p_3^{1234}=p_3^{123}p_{123}^{1234},\quad p_4^{1234}=p_4^{34}p_{34}^{234}p_{234}^{1234},\]
    and consider the commutative diagram
    \[\begin{tikzcd}
      &\widetilde{X_{1234}}\arrow[ld,"p_4^{1234}"]\arrow[rd,"p_3^{1234}"]\arrow[rr,"p_{123}^{1234}"]&&\widetilde{X_{123}}\arrow[ld,"p_3^{123}"]\arrow[rr,"p_{12}^{123}"] \arrow[rd,"p_2^{123}"]&&\widetilde{X_{12}} \arrow[ld,"p_2^{12}"]\arrow[rd,"p_1^{12}"]\\
      \widetilde{X_4}&&\widetilde{X_3}&&\widetilde{X_2}&&\widetilde{X_1}
    \end{tikzcd}\]
    in $\widetilde{\mathcal{C}}$. The back side square is then the diagram
    \[\begin{tikzcd}
      (p_4^{1234*})^{-1}p_3^{1234*}(p_3^{123*})^{-1}p_2^{123*}(p_2^{12*})^{-1}p_1^{12*}\arrow[r,"\sim"]\arrow[d,"\sim"]&(p_4^{1234*})^{-1}p_3^{1234*}(p_3^{123*})^{-1}p_{12}^{123*}p_1^{12*} \arrow[d,"\sim"]\\
      (p_4^{1234*})^{-1}p_{123}^{1234*}p_2^{123*}(p_2^{12*})^{-1}p_1^{12*}\arrow[r,"\sim"]&(p_4^{1234*})^{-1}p_{123}^{1234*}p_{12}^{123*}p_1^{12^*}
    \end{tikzcd}\]
    in $\mathcal{D}$, and it commutes since $G$ is a pseudofunctor.
  \end{enumerate}
  We have proved all axioms of pseudofunctors for $H$, so $H$ is a pseudofunctor.
\end{none}
\begin{none}\label{1.6}
  The remaining part to construct (\ref{1.2.1}) is the construction of $\alpha:G\rightarrow H\circ F$. Under the conditions from (A--1) to (A--3), we can construct $\alpha$ as follows. For any object $\widetilde{X_1}$ of $\widetilde{\mathcal{C}}$, put $\alpha(\widetilde{X_1})$ as the identity $1$-morphism
  \[G(\widetilde{X_1})\rightarrow (H\circ F)(\widetilde{X_1}).\]
  For any morphism $\widetilde{X_2}\stackrel{\widetilde{f_1}}\rightarrow \widetilde{X_1}$ in $\widetilde{C}$, by (A--1), we have the diagram
    \[\begin{tikzcd}
      &\widetilde{X_{12}}\arrow[ld,"p_2^{12}"']\arrow[rd,"p_1^{12}"]\\
      \widetilde{X_2}&&\widetilde{X_1}
    \end{tikzcd}\]
  in $\widetilde{\mathcal{C}}$ such that $p_2^{12}$ has a section $d_{12}^2$ with $p_1^{12}d_{12}^2=\widetilde{f_1}$. Then put $\alpha_{\widetilde{f_1}}$ as the invertible $2$-morphism
  \[\widetilde{f_1}^*\stackrel{\sim}\longrightarrow d_{12}^{2*}p_1^{12*}\stackrel{\sim}\longrightarrow d_{12}^{2*}p_2^{12*}(p_2^{12*})^{-1}p_1^{12*}\stackrel{\sim}\longrightarrow (p_2^{12*})^{-1}p_1^{12*}=H(f_1)\]
  in $\mathcal{D}$ where $f_1=F(\widetilde{f_1})$.

  Now, we will verify the axioms of pseudonatural transformations for $\alpha$.
  \begin{enumerate}[(1)]
    \item Let $\widetilde{X_2}\stackrel{\widetilde{f_1}}\rightarrow \widetilde{X_1}$ be a morphism in $\widetilde{C}$, and put $f_1=F(\widetilde{f_1})$. When $\widetilde{f_1}$ is the identity morphism, we have to show that the diagram
    \[\begin{tikzcd}
      \widetilde{f_1}^*\arrow[r,"\alpha_{\widetilde{f_1}}"]\arrow[rd,"\sim"']&H(f_1)\arrow[d,"H_{f_1}"]\\
      &{\rm id}
    \end{tikzcd}\]
    in $\mathcal{D}$ commutes. It follows from construction.
    \item Let $\widetilde{X_3}\stackrel{\widetilde{f_2}}\rightarrow \widetilde{X_2}\stackrel{\widetilde{f_1}}\rightarrow \widetilde{X_1}$ be morphisms in $\widetilde{C}$, and put
    \[f_1=F(\widetilde{f_1}),\quad f_2=F(\widetilde{f_2}).\]
    We have to show that the diagram
    \begin{equation}\label{1.6.1}\begin{tikzcd}
      \widetilde{f_2}^*\widetilde{f_1}^*\arrow[d,"\sim"]\arrow[r,"\alpha_{\widetilde{f_2}}\alpha_{\widetilde{f_1}}"]&H(f_2)H(f_1)\arrow[d,"H_{f_1,f_2}"]\\
      (\widetilde{f_1}\widetilde{f_2})^*\arrow[r,"\alpha_{\widetilde{f_1}\widetilde{f_2}}"]&H(f_1f_2)
    \end{tikzcd}\end{equation}
    in $\mathcal{D}$ commutes. Since $p_2^{12*}$, $p_3^{23*}$, $p_{13}^{123*}$, and $p_{23}^{123*}$ are invertible, we have isomorphisms
    \[d_{12}^{2*}\stackrel{\sim}\longrightarrow (p_2^{12*})^{-1},\quad d_{23}^{3*}\stackrel{\sim}\longrightarrow (p_3^{23*})^{-1},\]
    \[d_{123}^{13*}\stackrel{\sim}\longrightarrow (p_{13}^{123*})^{-1},\quad d_{123}^{23*}\stackrel{\sim}\longrightarrow (p_{23}^{123*})^{-1}.\]
    Then we have the diagram
    \[\begin{tikzcd}
      \widetilde{f_2}^*\widetilde{f_1}^*\arrow[ddd,"\sim"]\arrow[r,"\sim"]&d_{23}^{3*}p_2^{23*}d_{12}^{2*}p_1^{12*}\arrow[d,"\sim"]\arrow[r,"\sim"]& (p_3^{23*})^{-1}p_2^{23*}(p_2^{12*})^{-1}p_1^{12*}\arrow[d,"\sim"]\\
      &d_{23}^{3*}d_{123}^{23*}p_{12}^{123*}p_1^{12*}\arrow[d,"\sim"]\arrow[r,"\sim"]&(p_3^{23*})^{-1}(p_{23}^{123*})^{-1}p_{12}^{123*}p_1^{12*}\arrow[d,"\sim"]\\
      &d_{13}^{3*}d_{123}^{13*}p_{13}^{123*}p_1^{13*}\arrow[d,"\sim"]\arrow[r,"\sim"]&(p_3^{13*})^{-1}(p_{13}^{123*})^{-1}p_{13}^{123*}p_1^{13*}\arrow[d,"\sim"]\\
      (\widetilde{f_1}\widetilde{f_2})^*\arrow[r,"\sim"]&d_{13}^{3*}p_1^{13*}\arrow[r,"\sim"]&(p_3^{13*})^{-1}p_1^{13*}
    \end{tikzcd}\]
    in $\mathcal{D}$, and its big outside diagram is (\ref{1.6.1}). Since each small squares of the above diagram commute, (\ref{1.6.1}) commutes.
  \end{enumerate}
  We have verified the axioms of pseudonatural transformations for $\alpha$, so $\alpha$ is a pseudonatural transformation. Since $\alpha(\widetilde{X_1})$ is an equivalence for any object $\widetilde{X_1}$ of $\widetilde{C}$, $\alpha$ is a pseudonatural equivalence. Thus we have proven the following theorem.
\end{none}
\begin{thm}\label{1.7}
  Assume the conditions from {\rm (A--1)} to {\rm (A--3)}, and assume that $g^*$ is an equivalence for any $g\in \mathscr{W}$. Then there is a pseudofunctor $H:\mathcal{C}\rightarrow \mathcal{D}$ and a pseudonatural equivalence $\alpha:G\rightarrow H\circ F$ making the diagram
  \[\begin{tikzcd}
    \widetilde{\mathcal{C}}\arrow[d,"G"']\arrow[r,"F"]\arrow[rd,phantom,"{\rotatebox[origin=c]{0}{$\Leftrightarrow$}}_\alpha",very near start]&\mathcal{C}\arrow[ld,"H"]\\
    \mathcal{D}&\,
  \end{tikzcd}\]
  commutes.
\end{thm}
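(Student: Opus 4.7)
The plan is to build $H$ via a concrete span construction, check the three pseudofunctor axioms, and then define $\alpha$ by comparing the two models for pullback along a morphism in the image of $F$. On objects set $H(X_1):=G(\widetilde{X_1})$; on a morphism $f_1:X_2\rightarrow X_1$, use a span $\widetilde{X_2}\leftarrow \widetilde{X_{12}}\rightarrow \widetilde{X_1}$ from (A--1) to define $H(f_1):=(p_2^{12*})^{-1}p_1^{12*}$, which makes sense precisely because $g^*$ is invertible for every $g\in\mathscr{W}$. The unit $2$-morphism $H_{f_1}$ then comes from the section $d_{12}^2$ of (A--1), and the composition $2$-morphism $H_{f_1,f_2}$ from the common triple refinement $\widetilde{X_{123}}$ of (A--2).

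I would verify the two unit axioms by applying the general ``change-of-span'' coherence of (\ref{1.4}) to diagrams in which one of the two layers collapses to an identity. For associativity, the key step will be to introduce, for four composable arrows, a fourfold common refinement $\widetilde{X_{1234}}$ guaranteed by (A--3), together with auxiliary $2$-morphisms $Q_{f_i,f_j}$ built from it; after inserting exchange $2$-morphisms which commute by (\ref{1.4}), the pentagon cube reduces to a single square whose commutativity is just the pseudofunctoriality of $G$ applied to stacked triple spans.

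For $\alpha$, I would take $\alpha(\widetilde{X_1})={\rm id}$ on objects and, for $\widetilde{f_1}:\widetilde{X_2}\rightarrow\widetilde{X_1}$, use the section $d_{12}^2$ with $p_1^{12}d_{12}^2=\widetilde{f_1}$ from (A--1) to build the $2$-cell $\widetilde{f_1}^*\stackrel{\sim}{\longrightarrow}d_{12}^{2*}p_1^{12*}\stackrel{\sim}{\longrightarrow}(p_2^{12*})^{-1}p_1^{12*}=H(F(\widetilde{f_1}))$. The unit axiom for $\alpha$ is immediate from construction; the composition axiom reduces, after replacing the $d^*$'s by inverses of $p^*$'s, to another diagram chase whose individual cells commute because $G$ is a pseudofunctor. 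Since every $\alpha(\widetilde{X_1})$ is an identity, $\alpha$ is automatically a pseudonatural equivalence, which completes the proof.

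The main obstacle throughout is bookkeeping: each axiom unfolds into a sizable planar or cubical diagram whose individual faces are either pseudofunctoriality constraints of $G$, invertibilities of $p^*$ for $p\in\mathscr{W}$, or compatibilities between sections $d$ and projections $p$ guaranteed by the latter clauses of (A--1) and (A--2). The role of (A--3) is precisely to provide the interpolation needed for associativity; without it there would be no way to compare the two nested double-refinement choices forced by the left and right sides of the pentagon.
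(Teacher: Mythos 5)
Your proposal is correct and follows essentially the same route as the paper: the span construction of $H$ from (A--1)/(A--2), the change-of-span coherence lemma of (\ref{1.4}) to verify the unit axioms and to reduce the associativity pentagon (via the fourfold refinement of (A--3) and the auxiliary $Q$-data) to a single square commuting by pseudofunctoriality of $G$, and the construction of $\alpha$ from the sections $d$ with the composition axiom checked by rewriting $d^*$'s as $(p^*)^{-1}$'s. This is a faithful outline of the paper's own argument in (\ref{1.3})--(\ref{1.6}).
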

\begin{none}\label{1.9}
  Now, we will prove that the above construction is functorial in the following sense.
\end{none}
\begin{thm}\label{1.8}
   Assume the conditions from {\rm (A--1)} to {\rm (A--3)}, and assume that $g^*$ is an equivalence for any $g\in \mathscr{W}$. Let
  \[\begin{tikzcd}
    \widetilde{\mathcal{C}}\arrow[dd,"G'"]\arrow[rr,"F"]\arrow[dd,bend right,"G"']&&\mathcal{C}\arrow[lldd,"H"']\arrow[lldd,bend left,"H'"]\\
    \\
    \mathcal{D}
  \end{tikzcd}\]
  be a diagram in $\mathcal{C}$ extending {\rm (\ref{1.1.1})} where $G'$ and $H'$ are contravariant pseudofunctors such that $G'(g)$ is invertible for any morphism $g\in\mathscr{W}$, and let
  \[\begin{tikzcd}
    G\arrow[r,"\alpha"]\arrow[d,"\beta"]&H\circ F\\
    G'\arrow[r,"\alpha'"]&H'\circ F
  \end{tikzcd}\]
  be a diagram of pseudofunctors {\rm (}i.e., $\alpha$, $\beta$, and $\alpha'$ are pseudonatural transformations{\rm )} such that $\alpha$ and $\alpha'$ are pseudonatural equivalences. Then there is a unique pseudonatural transformation $\gamma:H\rightarrow H'$ up to isomorphisms such that the induced pseudonatural transformation
  \[H\circ F\rightarrow H'\circ F\]
  makes the above diagram commutative.
\end{thm}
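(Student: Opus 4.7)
The plan is to construct $\gamma$ by mimicking the construction of $\alpha$ from (\ref{1.6}), with $\beta$ playing the role that the identity plays there. Since $\alpha(\widetilde{X_1})$ and $\alpha'(\widetilde{X_1})$ are the identity $1$-morphisms in (\ref{1.6}), the required commutativity of the square on objects forces $\gamma(F(\widetilde{X_1}))=\beta(\widetilde{X_1})$. Using the identification ${\rm ob}\,\mathcal{C}={\rm ob}\,\widetilde{\mathcal{C}}$ from (\ref{1.1}), I therefore define $\gamma(X_1):=\beta(\widetilde{X_1})$ for each object $X_1$ of $\mathcal{C}$.

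For a morphism $f_1:X_2\rightarrow X_1$ in $\mathcal{C}$, I use the span
\[\widetilde{X_2}\stackrel{p_2^{12}}\longleftarrow \widetilde{X_{12}}\stackrel{p_1^{12}}\longrightarrow \widetilde{X_1}\]
supplied by (A--1), so that $H(f_1)=(G(p_2^{12}))^{-1}G(p_1^{12})$ and analogously $H'(f_1)=(G'(p_2^{12}))^{-1}G'(p_1^{12})$. Since $p_2^{12}\in\mathscr{W}$, both $G(p_2^{12})$ and $G'(p_2^{12})$ are invertible, so the component $\beta_{p_2^{12}}$ can be rearranged by whiskering with their inverses to yield an invertible $2$-morphism $\beta(\widetilde{X_2})(G(p_2^{12}))^{-1}\stackrel{\sim}\longrightarrow (G'(p_2^{12}))^{-1}\beta(\widetilde{X_{12}})$. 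Composing this with the component $\beta_{p_1^{12}}$ produces the required invertible $2$-morphism $\gamma_{f_1}:\gamma(X_2)H(f_1)\longrightarrow H'(f_1)\gamma(X_1)$.

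I would then verify the two axioms of pseudonatural transformations for $\gamma$. Axiom (i) is immediate from the identity clause of (A--1) and axiom (i) of $\beta$. For axiom (ii), given $X_3\stackrel{f_2}\rightarrow X_2\stackrel{f_1}\rightarrow X_1$ and the diagram supplied by (A--2), the required equality expands to a large pasting diagram whose small squares commute by the pseudofunctor axioms of $G$ and $G'$, by axiom (ii) of $\beta$, or by the naturality of inversion for components $\beta_g$ with $g\in\mathscr{W}$. This diagram chase is strictly parallel to (\ref{1.4}) and step (3) of (\ref{1.5}) and constitutes the main technical obstacle of the proof. The commutativity of the induced square on a morphism $\widetilde{f_1}$ of $\widetilde{\mathcal{C}}$ then follows by unwinding the definitions of $\alpha_{\widetilde{f_1}}$, $\alpha'_{\widetilde{f_1}}$, and $\gamma_{F(\widetilde{f_1})}$ along the section $d_{12}^2$ furnished by (A--1).

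For uniqueness, suppose $\gamma':H\rightarrow H'$ is another pseudonatural transformation whose composite with $F$ fits into the same commutative diagram. On objects, the forced equality of $\gamma\circ F$ and $\gamma'\circ F$ through $\alpha$ and $\alpha'$ produces canonical invertible $2$-morphisms $\Phi_{X_1}:\gamma(X_1)\stackrel{\sim}\longrightarrow \gamma'(X_1)$. The modification axiom for $\Phi$ on each morphism $f_1$ of $\mathcal{C}$ reduces, after expanding $\gamma_{f_1}$ and $\gamma_{f_1}'$ via the chosen span, to an identity that holds because both arise from the same components of $\beta$ modulo canonical isomorphisms supplied by the pseudonatural axioms for $\alpha$ and $\alpha'$. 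Thus $\Phi:\gamma\rightarrow \gamma'$ is the required isomorphism of pseudonatural transformations.
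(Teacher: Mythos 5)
Your proposal takes essentially the same route as the paper: set $\gamma(X_1)=\beta(\widetilde{X_1})$, build $\gamma_{f_1}$ from the components of $\beta$ along the span of (A--1), and verify the axioms by a pasting diagram parallel to (\ref{1.4}) and (\ref{1.5}); the uniqueness is also handled the same way, by extracting a modification from the assumed isomorphism of restrictions and verifying the modification axiom by expanding $\gamma_{f_1}$ along the span.

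One imprecision is worth flagging, since it looks harmless but would silently break the later diagram chase if left as stated. You write that $H(f_1)=(G(p_2^{12}))^{-1}G(p_1^{12})$. In Theorem (\ref{1.8}), $H$ is a given pseudofunctor, not the one manufactured by the recipe of (\ref{1.3}); even after reducing to $\alpha=\mathrm{id}$, so that $G=H\circ F$, the object $H(f_1)$ assigned by $H$ to $f_1=F(p_1^{12})F(p_2^{12})^{-1}$ is only \emph{isomorphic} to $H(F(p_2^{12}))^{-1}H(F(p_1^{12}))$ via the compositor of $H$, not equal to it. The paper handles this carefully by introducing the auxiliary $1$-morphism $H_1(f_1):=H(r_2^{12})^{-1}H(r_1^{12})$ and the coherence isomorphism $\mu_{f_1}\colon H_1(f_1)\to H(f_1)$ (and likewise $\mu'_{f_1}$ for $H'$), then defining $\gamma_{f_1}:=\mu'_{f_1}\circ\eta_{f_1}\circ\mu_{f_1}^{-1}$ where $\eta_{f_1}$ is the $2$-morphism you describe. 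You need these $\mu$'s: without them, $\eta_{f_1}$ has the wrong source and target, and more importantly the verification of the pseudonatural compatibility for $\gamma$ must track how the compositors of $H$ and $H'$ interact with the span, which is exactly the role the $\mu$'s play in the paper's big diagram. Similarly, in the uniqueness argument your phrase ``both arise from the same components of $\beta$'' is misleading: $\gamma'$ is an arbitrary pseudonatural transformation with isomorphic restriction, so the precise mechanism is the modification $\Theta\colon\gamma\circ\alpha\to\gamma'\circ\alpha$ provided by that isomorphism, applied to the morphisms $r_1^{12}$, $r_2^{12}$ (which lie in the image of $F$) as in the paper.
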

\begin{proof}
  We may assume that $\alpha$ and $\alpha'$ are the identity $2$-morphisms for simplicity. We will first construct $\gamma$. For any object $X_1$ of $\mathcal{C}$, put $\gamma(X_1)$ as the $1$-morphism $H(X_1)\rightarrow H'(X_1)$ given by
  \[H(X_1)= G(\widetilde{X_1})\stackrel{\beta(\widetilde{X_1})}\longrightarrow G'(\widetilde{X_1})=H'(X_1).\]  For any morphism $X_2\stackrel{f_1}\rightarrow X_1$ in $\mathcal{C}$, consider the diagram
  \[\begin{tikzcd}
      &X_{12}\arrow[ld,"r_2^{12}"']\arrow[rd,"r_1^{12}"]\\
      X_2&&X_1
  \end{tikzcd}\]
  in $\mathcal{C}$ that is the image of the diagram in (A--1) via $F$. Put
  \[H_1(f_1)=H(r_2^{12})^{-1}H(r_1^{12}),\quad H_1'(f_1)=H'(r_2^{12})^{-1}H'(r_1^{12}).\]
  Then we have the invertible $2$-morphism
  \[\mu_{f_1}:H_1(f_1)\rightarrow H(f_1)\]
  in $\mathcal{D}$ given by the composition
  \[H(r_2^{12})^{-1}H(r_1^{12})\stackrel{\sim}\longrightarrow H((r_2^{12})^{-1})H(r_1^{12})\stackrel{H_{r_1^{12},(r_2^{12})^{-1}}}\longrightarrow H(r_1^{12}(r_2^{12})^{-1})\stackrel{\sim}\longrightarrow H(f_1).\]
  We similarly have the invertible $2$-morphism
  \[\mu_{f_1}':H_1'(f_1)\rightarrow H'(f_1)\]
  in $\mathcal{D}$. We also have the invertible $2$-morphism
  \[\eta_{f_1}:\gamma(X_2)H_1(f_1)\longrightarrow H_1'(f_1)\gamma(X_1)\]
  in $\mathcal{D}$ given by the composition
  \[\begin{split}
    \gamma(X_2)H(r_2^{12})^{-1}H(r_1^{12})&=\beta(\widetilde{X_2})G(p_2^{12})^{-1}G(p_1^{12})\rightarrow G'(p_2^{12})^{-1}\beta(\widetilde{X_{12}})G(p_1^{12})\\
    &\stackrel{\beta_{p_1^{12}}}\rightarrow G'(p_2^{12})^{-1}G(p_1^{12})\beta(\widetilde{X_1})=H_1'(r_2^{12})^{-1}H_1'(r_1^{12})\gamma(X_1).
  \end{split}\]
  Here, the second arrow is obtained by applying $H'(r_2^{12})^{-1}$ to the left side and $H(r_1^{12})^{-1}$ to the right side of the $2$-morphism
  \[\beta_{p_2^{12}}^{-1}:H'(r_2^{12})\beta(\widetilde{X_2})\longrightarrow \beta(\widetilde{X_{12}})H(r_2^{12})\]
  in $\mathcal{D}$. Then put $\gamma_{f_1}$ as the $2$-morphism
  \[\gamma(X_2)H(f_1)\rightarrow H'(f_2)\gamma(X_1)\]
  in $\mathcal{D}$ given by the composition
  \[\gamma(X_2)H(f_1)\stackrel{\mu_{f_1}^{-1}}\longrightarrow \gamma(X_2)H_1(f_1)\stackrel{\eta_{f_1}}\longrightarrow H_1'(f_1)\gamma(X_1)\stackrel{\mu_{f_1}'}\longrightarrow H'(f_1)\gamma(X_1).\]
  Now, we will verify the axioms of pseudonatural transformations for $\gamma$.
  \begin{enumerate}[(1)]
    \item For any morphism $X_2\stackrel{f_1}\rightarrow X_1$, when $X_1=X_2$ and $f_1$ is the identity morphism, we have to show that the diagram
    \begin{equation}\label{1.8.1}\begin{tikzcd}
      \gamma(X_1)H(f_1)\arrow[rr,"\gamma_{f_1}"]\arrow[rd,"H_{f_1}"']&&H'(f_1)\gamma(X_1)\arrow[ld,"H_{f_1}'"]\\
      &\gamma(X_1)
    \end{tikzcd}\end{equation}
    in $\mathcal{D}$ commutes. By (A--1), $p_2^{12}$ has a section $d_{12}^2$ such that $\widetilde{f_1}=p_1^{12}d_{12}^2$, and the above diagram is the big outside diagram of the diagram
    \[\begin{tikzpicture}[baseline= (a).base]
    \node[scale=.85] (a) at (0,0)
    {\begin{tikzcd}
      \gamma(X_1)H(f_1)\arrow[r,"\mu_{f_1}^{-1}"]\arrow[rdd,equal]&\gamma(X_1)G(p_2^{12})^{-1}G(p_1^{12})\arrow[rr]\arrow[d,"\sim"]
      &&G'(p_2^{12})^{-1}G'(p_1^{12})\gamma(X_1)\arrow[d,"\sim"]\arrow[r,"\mu_{f_1}'"]&H'(f_1)\gamma(X_1) \arrow[ldd,equal]\\
      &\gamma(X_1)G(d_2^{12})G(p_1^{12})\arrow[rr,"{\beta_{d_2^{12}}\beta_{p_1^{12}}}"]\arrow[d,"{G_{p_1^{12},d_2^{12}}}"]&&G'(d_2^{12})G'(p_1^{12}) \gamma(X_1)\arrow[d,"{G'_{p_1^{12},d_2^{12}}}"']\\
      &\gamma(X_1)G(\widetilde{f_1})\arrow[rr,"\beta_{\widetilde{f_1}}"]\arrow[rd,"G_{\widetilde{f_1}}"']&&G'(\widetilde{f_1})\gamma(X_1)\arrow[ld,"G_{\widetilde{f_1}}"]\\
      &&\gamma(X_1)
    \end{tikzcd}};
    \end{tikzpicture}\]
    in $\mathcal{D}$, whose small diagrams commute. Here, the not denoted arrow in the first row is the $2$-morphism induced by $\gamma$. Thus (\ref{1.8.1}) commutes.
    \item For any morphisms $X_3\stackrel{f_2}\rightarrow X_2\stackrel{f_1}\rightarrow X_1$ in $\mathcal{C}$, consider the commutative diagram
      \[\begin{tikzcd}
      &&X_{123}\arrow[ld,"r_{23}^{123}"']\arrow[d,"r_{13}^{123}"]\arrow[rd,"r_{12}^{123}"]\\
      &X_{23}\arrow[ld,"r_3^{23}"']\arrow[rd,"r_2^{23}"',near end]&X_{13}\arrow[lld,crossing over,"r_3^{13}",near end]&X_{12}\arrow[ld,"r_2^{12}",near end]\arrow[rd,"r_1^{12}"]\\
      X_3&&X_2&&X_1\arrow[llu,"r_1^{13}",near start,leftarrow,crossing over]
    \end{tikzcd}\]
    in $\mathcal{C}$ that is the image of the diagram in (A--2) via $F:\widetilde{\mathcal{C}}\rightarrow \mathcal{C}$. Put
    \[ U(f_2)=H(r_2^{12})^{-1}H(r_1^{12}),\quad U(f_1f_2)=H(r_3^{13})^{-1}H(r_1^{13}),\]
    \[ U'(f_2)=H'(r_2^{12})^{-1}H'(r_1^{12}),\quad U'(f_1f_2)=H'(r_3^{13})^{-1}H'(r_1^{13}),\]
    and consider the $2$-morphisms
    \[U(f_2)U(f_1)\stackrel{U_{f_1,f_2}}\longrightarrow U(f_1f_2),\quad U'(f_2)U(f_1)\stackrel{U'_{f_1,f_2}}\longrightarrow U'(f_1f_2)\]
    in $\mathcal{D}$ defined as in (\ref{1.3}(4)). We also have the $2$-morphisms
    \[U(f_2)\stackrel{\mu_{f_2}}\longrightarrow H(f_2),\quad U(f_1f_2)\stackrel{\mu_{f_1f_2}}\longrightarrow H(f_1f_2),\]
    \[U'(f_2)\stackrel{\mu_{f_2}'}\longrightarrow H'(f_2),\quad U'(f_1f_2)\stackrel{\mu'_{f_1f_2}}\longrightarrow H'(f_1f_2),\]
    \[\gamma(X_3)U(f_2)\stackrel{\eta_{f_2}}\longrightarrow U'(f_2)\gamma(X_2),\quad \gamma(X_3)U(f_1f_2)\stackrel{\eta_{f_1f_2}}\longrightarrow U'(f_1f_2)\gamma(X_1)\]
    in $\mathcal{D}$ defined as $\mu(f_1)$, $\mu'(f_1)$, and $\eta(f_1)$. Then we have the diagram
    \[\begin{tikzcd}
      \gamma(X_3)H(f_2)H(f_1)\arrow[r,"{H_{f_1,f_2}}"]\arrow[d,"\mu_{f_2}^{-1}\mu_{f_1}^{-1}"]&H(f_1f_2)\arrow[d,"\mu_{f_1f_2}^{-1}"]\\
      \gamma(X_3)H_1(f_2)H_1(f_1)\arrow[d,"\eta_{f_2}"]\arrow[r,"{U_{f_1,f_2}}"]&U(f_1f_2)\arrow[dd,"\eta_{f_1f_2}"]\\
      U'(f_2)\gamma(X_2)U(f_1)\arrow[d,"\eta_{f_2}"]\\
      U'(f_2)U'(f_1)\gamma(X_1)\arrow[r,"{U_{f_1,f_2}'}"]\arrow[d,"\mu_{f_2}'\mu_{f_1}'"]&U'(f_1f_2)\arrow[d,"\mu_{f_1f_2}'"]\\
      H'(f_2)H'(f_1)\gamma(X_1)\arrow[r,"{H_{f_1,f_2}'}"]&H'(f_1f_2)
    \end{tikzcd}\]
    in $\mathcal{D}$. The middle diagram commutes as in (\ref{1.4}) since $\beta:G\rightarrow G'$ is a pseudonatural transformation. The top and bottom diagrams commute as in the proof that (\ref{1.6.1}) commutes. Thus the above diagram commutes, so we obtain the commutative diagram
    \[\begin{tikzcd}
      \gamma(X_3)H(f_2)H(f_1)\arrow[r,"\gamma_{f_2}"]\arrow[d,"H_{f_1,f_2}"]&H'(f_2)\gamma(X_2)H(f_1)\arrow[r,"\gamma_{f_1}"]&H'(f_2)H'(f_1)\gamma(X_1)\arrow[d,"H_{f_1,f_2}'"]\\
      \gamma(X_3)H(f_1f_2)\arrow[rr,"\gamma_{f_1f_2}"]&&H'(f_1f_2)\gamma(X_1)
    \end{tikzcd}\]
    in $\mathcal{D}$.
  \end{enumerate}
  We have verified the axioms of pseudonatural transformations for $\gamma$, so $\gamma$ is a pseudonatural transformation. The commutativity of the diagram
  \[\begin{tikzcd}
    G\arrow[r,"\alpha"]\arrow[d,"\beta"]&H\circ F\arrow[d,"\gamma"]\\
    G'\arrow[r,"\alpha'"]&H'\circ F
  \end{tikzcd}\]
  of pseudofunctors follows from construction. Thus the remaining part is the uniqueness of $\gamma$ up to isomorphisms. If $\gamma':H\rightarrow H'$ is another pseudonatural transformation such that $\alpha'\beta$ is isomorphic to $\gamma'\alpha$, then $\gamma\alpha$ and $\gamma'\alpha$ are isomorphic. From (\ref{3.11}), the data of the isomorphism is a collection of invertible $2$-morphisms
  \[\Theta_{\widetilde{Y_1}}:\gamma\alpha(\widetilde{Y_1})\rightarrow \gamma'\alpha(\widetilde{Y_1})\]
  in $\mathcal{D}$ for any object $\widetilde{Y_1}$ of $\widetilde{\mathcal{C}}$ such that for any morphism $\widetilde{g_1}:\widetilde{Y_2}\rightarrow \widetilde{Y_1}$ in $\widetilde{\mathcal{C}}$, the diagram
  \begin{equation}\label{1.8.3}\begin{tikzcd}
    \gamma\alpha(\widetilde{Y_1})\arrow[r,"\Theta_{\widetilde{Y_1}}"]\arrow[d,"(\gamma\alpha)_{\widetilde{g_1}}"]&\gamma'\alpha(\widetilde{Y_1}) \arrow[d,"(\gamma'\alpha)_{\widetilde{g_1}}"]\\
    \gamma\alpha(\widetilde{Y_2})\arrow[r,"\Theta_{\widetilde{Y_2}}"]&\gamma'\alpha(\widetilde{Y_2})
  \end{tikzcd}\end{equation}
  in $\mathcal{D}$ commutes. Then for any object $X_1$ of $\mathcal{C}$, the data gives the collection of invertible $2$-morphisms
  \[\Phi_{X_1}:\gamma(X_1)\rightarrow \gamma'(X_1)\]
  in $\mathcal{D}$. The condition that $\Gamma$ is a modification is the commutativity of the diagram
  \[\begin{tikzcd}
    \gamma(X_1)\arrow[r,"\Phi_{X_1}"]\arrow[d,"\gamma_{f_1}"]&\gamma'(X_2)\arrow[d,"\gamma_{f_1}'"]\\
    \gamma(X_2)\arrow[r,"\Phi_{X_2}"]&\gamma'(X_2)
  \end{tikzcd}\]
  in $\mathcal{D}$ for any morphism $f_1:X_2\rightarrow X_1$ in $\mathcal{C}$. It follows from showing that the diagram
  \[\begin{tikzcd}
    \gamma(X_1)\arrow[r,"\Phi_{X_1}"]\arrow[d,"\gamma_{r_1^{12}}"]&\gamma'(X_2)\arrow[d,"\gamma_{r_1^{12}}'"]\\
    \gamma(X_{12})\arrow[r,"\Phi_{X_{12}}"]\arrow[d,"\gamma_{r_2^{12}}^{-1}"]&\gamma'(X_{12})\arrow[d,"\gamma_{r_2^{12}}'^{-1}"]\\
    \gamma(X_2)\arrow[r,"\Phi_{X_2}"]&\gamma'(X_2)
  \end{tikzcd}\]
  in $\mathcal{D}$ commutes, which is true because of the commutativity of (\ref{1.8.3}). Thus $\Phi:\gamma\rightarrow \gamma'$ is a modification, which is an isomorphism since $\Phi_{X_1}$ is invertible for any object $X$ of $\mathcal{C}$.
\end{proof}
\section{Categories with immersions}
\begin{df}\label{2.1}
  A {\it category with immersions} is the data of $(\mathcal{C},c:\mathcal{I}\rightarrow \mathcal{J})$ with the following axioms.
  \begin{enumerate}
    \item[(D--1)] $\mathcal{C}$ is a category, and $\mathcal{I}$ and $\mathcal{J}$ are classes of monomorphisms in $\mathcal{C}$, and $c:\mathcal{I}\rightarrow \mathcal{J}$ is a function. Assume that $\mathcal{C}$ has fiber products, an initial object, and a final object. A morphism in $\mathcal{I}$ is called a {\it closed immersions}, and a morphism in $\mathcal{J}$ is called an {\it open immersions}. When $i\in \mathcal{I}$, $c(i)$ is called the {\it complement} of $i$.
    \item[(D--2)] For any closed immersion $Z\rightarrow S$ in $\mathcal{C}$, the morphism $c(Z\rightarrow S)$ is an open immersion whose target is $S$.
    \item[(D--3)] Isomorphisms are closed immersions and open immersions, and closed immersions and open immersions are stable by compositions and pullbacks.
    \item[(D--4)] For any commutative diagram
    \begin{equation}\label{2.1.1}\begin{tikzcd}
      Z_2\arrow[d,"v_1"]\arrow[r,"i_2"]&S_2\arrow[d,"u_1"]\\
      Z_1\arrow[r,"i_1"]&S_1
    \end{tikzcd}\end{equation}
    in $\mathcal{C}$ where $i_1$ and $i_2$ are closed immersions with complements $j_1:U_1\rightarrow S_1$ and $j_2:U_2\rightarrow S_2$ respectively, the projection $U_1\times_{S_1}S_2\rightarrow S_2$ has a factorization
    \[U_1\times_{S_1}S_2\stackrel{j_2'}\rightarrow U_2\stackrel{j_2}\rightarrow S_2\]
    for some open immersion $j_2'$. Moreover, $j_2'$ is an isomorphism when the diagram (\ref{2.1.1}) is Cartesian.
  \end{enumerate}
  If no confusion seems likely to arise, we simply write $\mathcal{C}$ for $(\mathcal{C},c:\mathcal{I}\rightarrow \mathcal{J})$.
\end{df}
\begin{df}\label{2.3}
  Let $\mathcal{C}$ be a category with immersions, and let $S$ be an object of $\mathcal{C}$. We say that $S$ is {\it separated} if the diagonal morphism $S\rightarrow S\times S$ is a closed immersion.
\end{df}
\begin{prop}\label{2.4}
  Let $\mathcal{C}$ be a category with immersions, and let $i_1:Z\rightarrow S_1$ and $i_2:Z\rightarrow S_2$ be closed immersions in $\mathcal{C}$. If $Z$ is separated, then the morphism
  \[i:Z\rightarrow S_1\times S_2\]
  induced by $i_1$ and $i_2$ is a closed immersion.
\end{prop}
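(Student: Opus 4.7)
The plan is to factor $i$ through the diagonal of $Z$ and then use the stability axioms for closed immersions supplied by (D--3). Concretely, one writes
\[
i \;=\; (i_1\times i_2)\circ \Delta_Z,
\]
where $\Delta_Z:Z\to Z\times Z$ is the diagonal and $i_1\times i_2:Z\times Z\to S_1\times S_2$ is the evident product morphism. Since closed immersions are stable by compositions by (D--3), it suffices to show that each of the two factors is a closed immersion.

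The first factor $\Delta_Z$ is a closed immersion by the assumption that $Z$ is separated, i.e.\ by the very definition (\ref{2.3}).

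For the second factor, the key observation is that $i_1\times i_2$ decomposes as
\[
Z\times Z \;\xrightarrow{\;i_1\times \mathrm{id}_Z\;}\; S_1\times Z \;\xrightarrow{\;\mathrm{id}_{S_1}\times i_2\;}\; S_1\times S_2.
\]
Each of these two arrows fits into a Cartesian square in $\mathcal{C}$: the first is the pullback of $i_1:Z\to S_1$ along the projection $S_1\times Z\to S_1$, and the second is the pullback of $i_2:Z\to S_2$ along the projection $S_1\times S_2\to S_2$. (These Cartesian squares exist because $\mathcal{C}$ has fiber products by (D--1), and $S_1\times Z$, $S_1\times S_2$ are fiber products over the final object.) By the pullback stability clause of (D--3), both morphisms are closed immersions, hence so is their composition by the composition stability clause of the same axiom.

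Combining these two steps, $i=(i_1\times i_2)\circ\Delta_Z$ is a composition of closed immersions, so it is a closed immersion. No step here looks difficult; the only thing to be careful about is the routine verification that the two squares used in the decomposition of $i_1\times i_2$ are indeed Cartesian, which is immediate from the universal property of the fiber product.
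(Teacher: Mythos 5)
Your proposal is correct and follows essentially the same approach as the paper: both factor $i$ through the diagonal $\Delta_Z$ and then through two single-coordinate morphisms, each of which is a pullback of $i_1$ or $i_2$, invoking (D--3) for stability under pullbacks and compositions. The only cosmetic difference is that the paper passes through $Z\times S_2$ (applying $\mathrm{id}\times i_2$ first, then $i_1\times\mathrm{id}$), whereas you pass through $S_1\times Z$, but these are interchangeable.
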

\begin{proof}
  The morphism $i$ has the factorization
  \[Z\stackrel{\Delta}\longrightarrow Z\times Z\stackrel{{\rm id}\times i_2}\longrightarrow Z\times S_2\stackrel{i_1\times {\rm id}}\longrightarrow S_1\times S_2\]
  where $\Delta$ denotes the diagonal morphism. The first arrow is a closed immersion since $Z$ is separated, and the second and third arrows are closed immersions by (D--3). Thus the composition is also a closed immersion by (D--3).
\end{proof}
\begin{eg}\label{2.5}
  Let $\mathcal{C}$ be a category with immersions, and let $\mathcal{E}$ be a full subcategory of $\mathcal{C}$ such that
  \begin{enumerate}[(i)]
    \item for any object $Z_1$ of $\mathcal{C}$, there is a closed immersion $Z_1\rightarrow S_1$ with $S_1\in {\rm ob}\,\mathcal{E}$,
    \item for any open immersion $U_1\rightarrow S_1$ in $\mathcal{C}$, if $S_1$ is in $\mathcal{E}$, then $U_1$ is also in $\mathcal{E}$.
  \end{enumerate}
  We will construct some categories with immersions and functors that will be used frequently later.

  The category $\mathcal{E}$ has a structure of immersions as follows. A morphism $S_2\rightarrow S_1$ in $\mathcal{E}$ is a closed immersion if its image in $\mathcal{C}$ is a closed immersion. A morphism $S_2\rightarrow S_1$ in $\mathcal{E}$ is an open immersion if its image in $\mathcal{C}$ is an open immersion. The complement of a closed immersion is defined from the complement function in $\mathcal{C}$, which is possible because of (ii). The axioms (D--3) and (D--4) for $\mathcal{E}$ follows from those for $\mathcal{C}$.

  Consider the category $\widetilde{\mathcal{C}}$ where objects are morphisms $(Z_1\rightarrow S_1)$ in $\mathcal{C}$ and morphisms are commutative diagrams
  \[\begin{tikzcd}
    (Z_2\arrow[d,"v_1"]\arrow[r,"i_2"]&S_2)\arrow[d,"u_1"]\\
    (Z_1\arrow[r,"i_1"]&S_1)
  \end{tikzcd}\]
  in $\mathcal{C}$. Then $\widetilde{\mathcal{C}}$ has a structure of immersions as follows. A closed immersion in $\widetilde{\mathcal{C}}$ is a morphism that is isomorphic to a commutative diagram
  \begin{equation}\label{2.5.1}\begin{tikzcd}
    (Z_1\arrow[d,"i_1"]\arrow[r,"i_1"]&S_1)\arrow[d,"{\rm id}"]\\
    (S_1\arrow[r,"{\rm id}"]&S_1)
  \end{tikzcd}\end{equation}
  in $\mathcal{C}$ where $i_1$ is a closed immersion in $\mathcal{C}$. An open immersion in $\widetilde{\mathcal{C}}$ is a morphism that is isomorphic to a commutative diagram
  \begin{equation}\label{2.5.2}\begin{tikzcd}
    (U_1\arrow[d,"j_1"]\arrow[r,"{\rm id}"]&U_1)\arrow[d,"j_1"]\\
    (S_1\arrow[r,"{\rm id}"]&S_1)
  \end{tikzcd}\end{equation}
  in $\mathcal{C}$ where $j_1$ is an open immersion in $\mathcal{C}$. The complement of a closed immersion in $\widetilde{\mathcal{C}}$ given by (\ref{2.5.1}) is the open immersion in $\widetilde{\mathcal{C}}$ given by (\ref{2.5.2}) if $j_1$ is the complement of $i_1$. The axioms (D--3) and (D--4) for $\widetilde{\mathcal{C}}$ follows from those for $\mathcal{C}$.

  Let $S_1$ be an object of $\mathcal{C}$. We often write $S_1$ for the object $(S_1\rightarrow S_1)$ in $\widetilde{\mathcal{C}}$ when no confusion seems likely to arise. Let $S_2\rightarrow S_1$ be a morphism in $\mathcal{C}$. We often write $S_2\rightarrow S_1$ for the morphism in $\widetilde{\mathcal{C}}$ given by the commutative diagram
  \[\begin{tikzcd}
    (S_2\arrow[d,"f"]\arrow[r,"{\rm id}"]&S_2)\arrow[d,"f"]\\
    (S_1\arrow[r,"{\rm id}"]&S_1)
  \end{tikzcd}\]
  in $\mathcal{C}$ when no confusion seems likely to arise.

  Let $U:\mathcal{E}\rightarrow \widetilde{\mathcal{C}}$ denote the functor mapping object $S_1$ to $S_1$ and mapping morphism $S_2\rightarrow S_1$ to $S_2\rightarrow S_1$. Then let $F:\widetilde{\mathcal{C}}\rightarrow \mathcal{C}$ denote the functor mapping object $(Z_1\rightarrow S_1)$ to $Z_1$ and mapping morphism given by the diagram
  \[\begin{tikzcd}
    (Z_2\arrow[d,"v_1"]\arrow[r,"i_2"]&S_2)\arrow[d,"u_1"]\\
    (Z_1\arrow[r,"i_1"]&S_1)
  \end{tikzcd}\]
  in $\mathcal{C}$ to $v_1:Z_2\rightarrow Z_1$.

  Let $\mathscr{P}$ be a class of morphisms of $\mathcal{C}$ containing all isomorphisms and stable by compositions and pullbacks. Then we denote by $\widetilde{\mathscr{P}}$ the class of morphisms in $\widetilde{\mathcal{C}}$ given by a Cartesian diagram
  \[\begin{tikzcd}
    (Z_2\arrow[d,"v_1"]\arrow[r,"i_2"]&S_2)\arrow[d,"u_1"]\\
    (Z_1\arrow[r,"i_1"]&S_1)
  \end{tikzcd}\]
  such that $u_1$ is in $\mathscr{P}$. We also put $\mathscr{P}'=W^{-1}(\mathscr{P})$.
\end{eg}
\begin{rmk}\label{2.9}
  In (\ref{2.5}), we are particularly interested in the case is when $\mathcal{C}=\mathscr{S}$, $\mathcal{E}=\mathscr{S}^{sm}$, and $\mathscr{P}=Sm$.
\end{rmk}
\begin{none}\label{2.2}
  Under the notations and hypothesis of (\ref{2.5}), we will show the conditions from (A--1) to (A--3) so that we can use (\ref{1.7}) and (\ref{1.8}). Consider the category $\mathcal{C}'$ where objects are morphisms $(Z_1\rightarrow S_1)$ in $\mathcal{C}$ and morphisms are commutative diagrams
  \[\begin{tikzcd}
    (Z_2\arrow[d]\arrow[r]&S_2)\\
    (Z_1\arrow[r]&S_1)
  \end{tikzcd}\]
  in $\mathcal{C}$. Then $\mathcal{C}'$ is equivalent to $\mathcal{C}$, and we have the obvious functor
  \[F':\widetilde{\mathcal{C}}\rightarrow \mathcal{C}'.\]
  We will verify the conditions from (A--1) to (A--3) for $F'$.

  Let $X_4\stackrel{f_3}\rightarrow X_3\stackrel{f_2}\rightarrow X_2\stackrel{f_1}\rightarrow X_1$ be morphisms in $\mathcal{C}$ given by the diagram
  \begin{equation}\label{2.2.1}\begin{tikzcd}
    (Z_4\arrow[d]\arrow[r]&S_4)\\
    (Z_3\arrow[r]\arrow[d]&S_3)\\
    (Z_2\arrow[d]\arrow[r]&S_2)\\
    (Z_1\arrow[r]&S_1)
  \end{tikzcd}\end{equation}
  in $\mathcal{C}$. Then put
  \[\widetilde{X_1}=X_1,\quad \widetilde{X_2}=X_2,\quad \widetilde{X_3}=X_3,\quad \widetilde{X_4}=X_4,\]
  \[\widetilde{X_{12}}=(Z_2\rightarrow S_1\times S_2),\quad \widetilde{X_{13}}=(Z_3\rightarrow S_1\times S_3),\quad \widetilde{X_{14}}=(Z_4\rightarrow X_1\times X_4),\]
  \[\widetilde{X_{23}}=(Z_3\rightarrow S_2\times S_3),\quad \widetilde{X_{24}}=(Z_4\rightarrow S_2\times S_4),\quad \widetilde{X_{34}}=(Z_4\rightarrow X_3\times X_4),\]
  \[\widetilde{X_{123}}=(Z_3\rightarrow S_1\times S_2\times S_3),\quad \widetilde{X_{124}}=(Z_4\rightarrow S_1\times S_2\times S_4),\]
  \[\widetilde{X_{134}}=(Z_4\rightarrow S_1\times S_3\times S_4),\quad \widetilde{X_{234}}=(Z_4\rightarrow S_2\times S_3\times S_4),\]
  \[\widetilde{X_{1234}}=(Z_4\rightarrow S_1\times S_2\times S_3\times S_4)\]
  where the morphisms are induced by compositions of morphisms in (\ref{2.2.1}). By (\ref{2.4}), these are in $\widetilde{\mathcal{C}}$. We also have the morphisms
  \[p_1^{12}:\widetilde{X_{12}}\rightarrow \widetilde{X_1},\quad \ldots,\quad p_{234}^{1234}:\widetilde{X_{1234}}\rightarrow \widetilde{X_{234}},\]
  \[d_{12}^2:\widetilde{X_{2}}\rightarrow \widetilde{X_{12}},\quad \ldots,\quad d_{123}^{23}:\widetilde{X_{23}}\rightarrow \widetilde{X_{123}}\]
  induced by the projections
  \[S_1\times S_2\rightarrow S_1,\quad \ldots,\quad S_1\times S_2\times S_3\times S_4\rightarrow S_2\times S_3\times S_4\]
  and the graph morphisms
  \[S_2\times S_1\times S_2,\quad \ldots,\quad S_2\times S_3\rightarrow S_1\times S_2\times S_3\]
  respectively. Our construction of $\widetilde{X_1},\ldots,\widetilde{X_{1234}}$, $p_1^{12},\ldots,p_{234}^{1234}$, and $d_{12}^2,\ldots,d_{123}^{23}$ satisfy the conditions from (A--1) to (A--3) for $F'$.
\end{none}
\begin{df}\label{2.6}
  Let $\mathcal{C}$ be a category with immersions, let $\mathscr{P}$ be a class of morphisms in $\mathcal{C}$ containing all isomorphisms and stable by compositions and pullbacks, let $H:C\rightarrow {\rm Tri}^{\otimes}$ be a $\mathscr{P}$-premotivic pseudofunctor, let $i$ be a closed immersion in $\mathcal{C}$, and let $j$ denote the complement of $i$. We say that $H$ satisfies ${\rm (Loc}_i)$ if $i_*$ is fully faithful and the pair of functors $(i^*,j^*)$ is conservative.

  We also say that $H$ satisfies the {\it localization property}, denoted by (Loc), if
  \begin{enumerate}[(1)]
    \item For any Cartesian diagram
    \[\begin{tikzcd}
      W\arrow[d,"j'"]\arrow[r,"i'"]&U\arrow[d,"j"]\\
      Z\arrow[r,"i"]&S
    \end{tikzcd}\]
    where $i$ is a closed immersion with complement $j$, we have $H(W)=0$,
    \item $H$ satisfies $({\rm Loc}_i)$ for any closed immersion $i$ in $\mathcal{C}$.
  \end{enumerate}
\end{df}
\begin{none}\label{2.7}
  Under the notations and hypotheses of (\ref{2.6}), let $i:Z\rightarrow S$ be a closed immersion in $\mathcal{C}$, and let $j$ denote its complement. We list several consequences of (Loc) and the axioms from (B--1) to (B--4) whose proofs are the same as those in \cite[\S 2.3]{CD12}
  \begin{enumerate}[(1)]
    \item The functor $i_*$ admits a right adjoint $i^!$.
    \item There exists a unique natural transformation $\partial_i:i_*i^*\rightarrow j_\sharp j^*[1]$
        such that the triangle
        \[j_\sharp j^*\stackrel{ad'}\longrightarrow {\rm id}\stackrel{ad}\longrightarrow i_*i^*\stackrel{\partial_i}\longrightarrow j_\sharp j^*[1]\]
        is distinguished.
    \item There exists a unique natural transformation $\partial_i:j_*j^*\rightarrow i_* i^![1]$ such
        that the triangle
        \[i_* i^!\stackrel{ad'}\longrightarrow {\rm id}\stackrel{ad}\longrightarrow j_*j^*\stackrel{\partial_i}\longrightarrow i_* i^![1]\]
        is distinguished.
    \item We have $i^*j_\sharp=0$.
    \item The unit ${\rm id}\stackrel{ad}\longrightarrow j^*j_\sharp$ is an isomorphism.
  \end{enumerate}
\end{none}
\begin{none}\label{2.15}
  Under the notations and hypotheses of (\ref{2.6}), assume $\mathcal{C}=\mathscr{S}$. A consequence of (Loc) in \cite[3.3.4]{CD12} is as follows. Consider a Cartesian diagram
  \[\begin{tikzcd}
    U_{12}\arrow[d,"u_1"]\arrow[r,"u_2"]&U_2\arrow[d,"j_2"]\\
    U_1\arrow[r,"j_1"]&U
  \end{tikzcd}\]
  in $\mathscr{S}$ where $j_1$ and $j_2$ are open immersions, and put $j_{12}=j_1u_1$. Then there is a distinguished triangle
  \[j_{12\sharp}j_{12}^*\longrightarrow j_{1\sharp}j_1^*\oplus j_{2\sharp}j_2^*\stackrel{ad'+ad'}\longrightarrow {\rm id}\longrightarrow j_{12\sharp}j_{12}^*[1]\]
  where the first arrow is induced by the natural transformations
  \[j_{12\sharp}j_{12}^*\stackrel{\sim}\longrightarrow j_{1\sharp}u_{1\sharp}u_1^*j_1^*\stackrel{ad'}\longrightarrow j_{1\sharp}j_1^*,\]
  \[j_{12\sharp}j_{12}^*\stackrel{\sim}\longrightarrow j_{2\sharp}u_{2\sharp}u_2^*j_2^*\stackrel{-ad'}\longrightarrow j_{2\sharp}j_2^*.\]
  The distinguished triangled is called the Mayer-Vietoris distinguished triangle. Its right adjoint
  \[{\rm id}\longrightarrow j_{1*}j_1^*\oplus j_{2*}j_2^*\longrightarrow j_{12*}j_{12}^*\longrightarrow {\rm id}[1]\]
  is also a distinguished triangle.
\end{none}
\begin{prop}\label{2.8}
  Under the notations and hypotheses of {\rm (\ref{2.6})}, consider a Cartesian diagram
    \[\begin{tikzcd}
      Z_2\arrow[d,"v_1"]\arrow[r,"i_2"]&S_2\arrow[d,"u_1"]\\
      Z_1\arrow[r,"i_1"]&S_1
    \end{tikzcd}\]
    in $\mathcal{C}$ where $i_1$ is a closed immersion. Assume that $H$ satisfies {\rm (Loc)}. Then the exchange transformation
    \[u_1^*i_{1*}\stackrel{Ex}\longrightarrow i_{2*}v_1^*\]
    given by the composition
    \[u_1^*i_{1*}\stackrel{ad}\longrightarrow u_1^*i_{1*}v_{1*}v_1^*\stackrel{\sim}\longrightarrow u_1^*u_{1*}i_{2*}v_1^*\stackrel{ad'}\longrightarrow i_{2*}v_1^*\]
    is an isomorphism.
\end{prop}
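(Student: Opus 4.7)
The plan is to exploit the conservativity of the pair $(i_2^*,j_2^*)$ given by (Loc) to reduce the statement to two separate verifications. More precisely, let $j_1:U_1\to S_1$ be the complement of $i_1$, and let $j_2:U_2\to S_2$ be the complement of $i_2$. By (D--4) applied to our Cartesian square, we obtain a Cartesian square
\[\begin{tikzcd}
U_2\arrow[d,"u_1'"]\arrow[r,"j_2"]&S_2\arrow[d,"u_1"]\\
U_1\arrow[r,"j_1"]&S_1
\end{tikzcd}\]
with $u_1 j_2 = j_1 u_1'$. It then suffices to show that the exchange $u_1^*i_{1*}\to i_{2*}v_1^*$ becomes an isomorphism after applying $i_2^*$ and after applying $j_2^*$.

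For the $j_2^*$ side, I first observe that $j^*i_*=0$ for any closed immersion $i$ with complement $j$: indeed, applying the localization triangle from (\ref{2.7})(2) to an object of the form $i_*K$ and using the fact that $i_*$ is fully faithful shows that the middle arrow $i_*K\to i_*i^*i_*K$ is an isomorphism, forcing $j_\sharp j^*i_*K=0$; applying the isomorphism $j^*j_\sharp\cong {\rm id}$ from (\ref{2.7})(5) then gives $j^*i_*K=0$. Using the compatibility $j_2^*u_1^*\cong u_1'^*j_1^*$ coming from the Cartesian square above, the source $j_2^*u_1^*i_{1*}$ becomes $u_1'^*j_1^*i_{1*}=0$, and the target $j_2^*i_{2*}v_1^*$ vanishes by the same reason. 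So the exchange is trivially an isomorphism on the $j_2^*$ side.

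For the $i_2^*$ side, the identity $u_1 i_2 = i_1 v_1$ and full faithfulness of $i_{1*}$ give
\[i_2^* u_1^* i_{1*} \;\cong\; v_1^* i_1^* i_{1*}\;\stackrel{ad'}{\longrightarrow}\; v_1^*,\]
while full faithfulness of $i_{2*}$ gives $i_2^*i_{2*}v_1^*\stackrel{ad'}{\to} v_1^*$. What remains is a diagram chase showing that applying $i_2^*$ to the composition defining the exchange
\[u_1^*i_{1*}\stackrel{ad}\to u_1^*i_{1*}v_{1*}v_1^*\stackrel{\sim}\to u_1^*u_{1*}i_{2*}v_1^*\stackrel{ad'}\to i_{2*}v_1^*\]
is compatible with these two counit isomorphisms to $v_1^*$. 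This is accomplished by inserting the connecting $2$-isomorphism $i_2^*u_1^*\cong v_1^*i_1^*$ and invoking the triangular identities for the adjunctions $(i_1^*,i_{1*})$, $(i_2^*,i_{2*})$, $(v_1^*,v_{1*})$, $(u_1^*,u_{1*})$ together with the pseudofunctoriality of $H$; I expect this compatibility verification to be the main obstacle, since it requires carefully tracking units and counits across the composition.

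Once both checks are in place, the conservativity of $(i_2^*,j_2^*)$ furnished by $({\rm Loc}_{i_2})$ concludes that the exchange transformation is an isomorphism.
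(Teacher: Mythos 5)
Your proposal follows the same strategy as the paper's proof: reduce the question, via the conservativity of $(i_2^*,j_2^*)$ from $({\rm Loc}_{i_2})$, to checking the exchange after applying $j_2^*$ (where both sides vanish because $j^*i_*=0$ and $j_2^*u_1^*\cong u_1'^*j_1^*$) and after applying $i_2^*$ (where full faithfulness of $i_{1*}$ and $i_{2*}$ identify both source and target with $v_1^*$). The compatibility verification you flag as the remaining obstacle is precisely the commutativity of the small triangle the paper displays and asserts without spelling out the details, so you have landed on the same argument down to the same key diagram.
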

\begin{proof}
  Consider the diagram
  \[\begin{tikzcd}
      Z_2\arrow[d,"v_1"]\arrow[r,"i_2"]&S_2\arrow[d,"u_1"]\arrow[r,leftarrow,"j_2"]&U_2\arrow[d,"w_1"]\\
      Z_1\arrow[r,"i_1"]&S_1\arrow[r,leftarrow,"j_1"]&U_1
    \end{tikzcd}\]
  in $\mathcal{C}$ where each square is Cartesian and $j_1$ denotes the complement of $i_1$. By (D--4), $j_2$ is the complement of $i_2$. To show the statement, by (Loc), it suffices to show that the natural transformations
  \begin{equation}\label{2.8.1}
    j_2^*u_1^*i_{1*}\stackrel{Ex}\longrightarrow j_2^*i_{2*}v_1^*,\quad     i_2^*u_1^*i_{1*}\stackrel{Ex}\longrightarrow i_2^*i_{2*}v_1^*
  \end{equation}
  are isomorphisms. The first arrow is an isomorphism since $j_2^*i_{2*}=0$ and $j_2^*u_1^*i_{1*}\cong w_1^*j_1^*u_{1*}=0$. Hence the remaining is to show that the second arrow is an isomorphism.

  Consider the commutative diagram
  \[\begin{tikzcd}
    v_1^*i_1^*i_{1*}\arrow[r,"\sim"]\arrow[rrd,"ad'"']&i_2^*u_1^*i_{1*}\arrow[r,"Ex"]&i_2^*i_{2*}v_1^*\arrow[d,"ad'"]\\
    &&v_1^*
  \end{tikzcd}\]
  of functors. The vertical arrow and diagonal arrow are isomorphisms by (Loc), so the right horizontal arrow is an isomorphism. Thus the second arrow of (\ref{2.8.1}) is an isomorphism.
\end{proof}
\begin{prop}\label{2.16}
  Under the notations and hypotheses of {\rm (\ref{2.8})}, assume that $u_1$ is in $\mathscr{P}$. Then the exchange transformation
    \[u_{1\sharp}i_{2*}\stackrel{Ex}\longrightarrow i_{1*}v_{1\sharp}\]
    given by the composition
    \[u_{1\sharp}i_{2*}\stackrel{ad}\longrightarrow i_{1*}i_1^*u_{1\sharp}i_{2*}\stackrel{Ex^{-1}}\longrightarrow i_{1*}v_{1\sharp}i_2^*i_{2*}\stackrel{ad'}\longrightarrow i_{1*}v_{1\sharp}\]
    is an isomorphism.
\end{prop}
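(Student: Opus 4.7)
The plan is to mimic the proof of (\ref{2.8}): use the conservativity of the pair $(i_1^*,j_1^*)$ provided by (Loc) and check that the exchange transformation becomes an isomorphism after applying each of these functors. Let $j_1:U_1\to S_1$ denote the complement of $i_1$, form the Cartesian square on the complement side, obtaining $j_2:U_2\to S_2$ (the complement of $i_2$ by (D--4)) and $w_1:U_2\to U_1$; since $u_1\in\mathscr{P}$, pullback stability gives $w_1\in\mathscr{P}$ as well.

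First I would handle $j_1^*$. By the exchange isomorphism (B--3) for the right-hand Cartesian square we get $j_1^*u_{1\sharp}\xrightarrow{\sim} w_{1\sharp}j_2^*$, and (\ref{2.7}(4)) for $i_2$ together with the adjunction $j_{2\sharp}\dashv j_2^*\dashv j_{2*}$ shows $j_2^*i_{2*}=0$ (it is the right adjoint to $i_2^*j_{2\sharp}=0$). Thus $j_1^*u_{1\sharp}i_{2*}=0$. Symmetrically, $j_1^*i_{1*}v_{1\sharp}=0$ because $j_1^*i_{1*}=0$. A short diagram chase, parallel to the one in (\ref{2.8}), shows that the exchange transformation $u_{1\sharp}i_{2*}\to i_{1*}v_{1\sharp}$ is compatible with these vanishings, so it becomes an isomorphism $0\to 0$ after $j_1^*$.

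Next I would handle $i_1^*$. The base change (B--3) for the outer Cartesian square yields an isomorphism $i_1^*u_{1\sharp}\xrightarrow{\sim} v_{1\sharp}i_2^*$, so $i_1^*u_{1\sharp}i_{2*}\xrightarrow{\sim} v_{1\sharp}i_2^*i_{2*}\xrightarrow{\sim} v_{1\sharp}$, the last isomorphism coming from the full faithfulness of $i_{2*}$ ((Loc) for $i_2$). On the other side, the full faithfulness of $i_{1*}$ gives $i_1^*i_{1*}v_{1\sharp}\xrightarrow{\sim} v_{1\sharp}$. The essential task is then to verify that $i_1^*$ applied to the exchange transformation, when transported through these two identifications, is the identity natural transformation of $v_{1\sharp}$. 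This amounts to writing out the three-step composition defining the exchange, inserting the definition of the base-change transformation $u_1^*i_{1*}\xrightarrow{Ex}i_{2*}v_1^*$ from (\ref{2.8}) (dualised via adjunction to the $\sharp$-exchange), and using the unit/counit triangle identities for the adjunctions $(i_j^*,i_{j*})$ and $(u_{1\sharp},u_1^*)$, $(v_{1\sharp},v_1^*)$.

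The main obstacle is precisely this last compatibility: rewriting the three-step composition defining $u_{1\sharp}i_{2*}\to i_{1*}v_{1\sharp}$ into a form in which the reductions via (B--3) and (Loc) can be compared step by step. Once this diagram chase is done, conservativity of $(i_1^*,j_1^*)$ from (Loc) concludes the proof.
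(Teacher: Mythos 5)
Your proposal follows the same route as the paper: reduce via the conservativity of $(i_1^*,j_1^*)$ from (Loc), observe that $j_1^*$ of both source and target is $0$ (so that piece is automatic, no diagram chase actually needed there), and on the $i_1^*$ side use the $\sharp$-exchange isomorphism $v_{1\sharp}i_2^*\xrightarrow{\sim}i_1^*u_{1\sharp}$ together with the counits of $(i_1^*,i_{1*})$ and $(i_2^*,i_{2*})$. The compatibility you flag as the ``main obstacle'' is precisely what the paper records as the small commutative triangle $ad' \circ Ex \circ Ex = ad'$ (left $Ex$ the base-change isomorphism, right $Ex$ being $i_1^*$ applied to the full exchange), from which the conclusion is immediate; so your outline is correct and matches the paper's proof.
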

\begin{proof}
  Note first that the second arrow in the composition is defined and an isomorphism by (\ref{2.8}). The third arrow is an isomorphism by (Loc). Thus the remaining is to show that the first arrow is an isomorphism.

  Consider the Cartesian diagram
  \[\begin{tikzcd}
    S_2\arrow[d,"u_1"]\arrow[r,"j_2",leftarrow]&U_2\arrow[d,"w_1"]\\
    S_1\arrow[r,"j_1",leftarrow]&U_1
  \end{tikzcd}\]
  in $\mathcal{C}$ where $j_1$ denotes the complement of $i_1$. Then by (D--4), $j_2$ is the complement of $i_2$. By (Loc), it suffices to show that the natural transformations
  \begin{equation}\label{2.16.1}
    j_1^*u_{1\sharp}i_{2*}\stackrel{Ex}\longrightarrow j_1^*i_{1*}v_{1\sharp},
  \end{equation}
  \begin{equation}\label{2.16.2}
    i_1^*u_{1\sharp}i_{2*}\stackrel{Ex}\longrightarrow i_1^*i_{1*}v_{1\sharp}
  \end{equation}
  are isomorphisms. By (B--3) and (\ref{2.7}(4)), we have
  \[j_1^*u_{1\sharp}i_{2*}\cong w_{1\sharp}j_2^*i_{2*}=0,\quad j_1^*i_{1*}v_{1\sharp}=0,\]
  so (\ref{2.16.1}) is an isomorphism. Thus the remaining is to show that (\ref{2.16.2}) is an isomorphism. Consider the commutative diagram
  \[\begin{tikzcd}
    v_{1\sharp}i_2^*i_{2*}\arrow[rrd,"ad'"']\arrow[r,"Ex"]&i_1^*u_{1\sharp}i_{2*}\arrow[r,"Ex"]&i_1^*i_{1*}v_{1\sharp}\arrow[d,"ad'"]\\
    &&v_{1\sharp}
  \end{tikzcd}\]
  of functors. The vertical arrow and diagonal arrow are isomorphisms by (Loc), and the left horizontal arrow is an isomorphism by (\ref{2.8}). Thus the right horizontal arrow is an isomorphism.
\end{proof}
\begin{df}\label{2.11}
  We will consider the following definitions in (\ref{2.10}).
  \begin{enumerate}[(1)]
    \item Let $\mathcal{T}$ be a triangulated category with small sums, and let $K$ be an object. We say that $K$ is {\it compact} if the functor
    \[{\rm Hom}_\mathcal{T}(K,-)\]
    commutes with small sums.
    \item Let $\mathcal{T}$ be a triangulated category, and let $\mathcal{F}$ be a family of objects of $\mathcal{T}$. We say that $\mathcal{F}$ {\it generates} $\mathcal{T}$ if the family of functors
        \[{\rm Hom}_{\mathcal{T}}(K,-)\]
        for $K\in \mathcal{F}$ is conservative.
  \end{enumerate}
\end{df}
\begin{df}\label{2.12}
  Let $\mathcal{D}$ be one of $\mathscr{S}$ and $\mathscr{S}^{sm}$, and let
  \[H:\mathcal{D}\rightarrow {\rm Tri}^{\otimes}\]
  be a $Sm$-premotivic pseudofunctor. For an object $S$ of $\mathcal{D}$, we denote by $1_S(1)$ the cone of the object $p_\sharp a_* 1_S[-2]$ where $p:{\bf A}_S^1\rightarrow S$ denotes the projection and $a:S\rightarrow {\bf A}_S^1$ denotes the zero section. Then for any object $K$ of $H(S)$ and positive integer $n$, put
  \[K(n)=K\otimes_S 1_S(1)\otimes_S \cdots \otimes_S 1_S(1)\]
  where the number of $1_S(1)$ is $n$. When there is an object $1_S(-1)$ of $H(S)$ such that $1_S(1)\otimes 1_S(-1)\cong 1_S$, put
  \[K(-n)=K\otimes_S 1_S(-1)\otimes_S \cdots \otimes_S 1_S(-1)\]
  where the number of $1_S(-1)$ is $n$.
\end{df}
\begin{df}\label{2.10}
  Let $\mathcal{D}$ be one of $\mathscr{S}$ and $\mathscr{S}^{sm}$, and let
  \[H:\mathcal{D}\rightarrow {\rm Tri}^{\otimes}\]
  be a $Sm$-premotivic pseudofunctor. Consider the following axioms.
  \begin{enumerate}
    \item[(B--5)] For any object $S$ of $\mathcal{D}$, the counit
    \[p_\sharp p^*\stackrel{ad'}\longrightarrow {\rm id}\] is an isomorphism where $p$ denotes the projection $\mathbb{A}^1\times S\rightarrow S$.
    \item[(B--6)] For any object $S$ of $\mathcal{D}$, $1_S(1)$ is $\otimes$-invertible, i.e., there is an object $K$ of $H(S)$ such that $1_S(1)\otimes_S K\cong 1_S$.
    \item[(B--7)] Assume (B--6). For any object $S$ of $\mathcal{D}$, the family of objets $\mathcal{F}_S$ generates $H(D)$. Here, $\mathcal{F}_S$ denotes the family of objects $f_\sharp 1_X(d)[n]$ for smooth morphism $f:X\rightarrow S$ and $(d,n)\in \mathbb{Z}\times \mathbb{Z}$.
    \item[(B--8)] Assume (B--6). For any morphism $f:X\rightarrow S$ of $\mathcal{D}$ and $(d,n)\in \mathbb{Z}\times \mathbb{Z}$, $f_\sharp 1_X(d)[n]$ is compact.
    \item[(B--9)] For any proper morphism $f$ in $\mathcal{D}$, $f_*$ has a right adjoint, denoted by $f^!$.
  \end{enumerate}
  Following \cite[2.4.45]{CD12}, we say that $H$ is {\it motivic} if it satisfies (Loc) and the axioms (B--5), (B--6), and (B--9).
\end{df}
\begin{rmk}\label{2.13}
  In (\ref{2.10}), if $H$ satisfies (Loc), (B--5), (B--6), (B--7), and (B--8), then $H$ satisfies (B--9) by \cite[2.4.47]{CD12}, which means that $H$ is motivic.
\end{rmk}
\begin{none}\label{2.14}
  Under the notations and hypotheses of (\ref{2.10}), if $H$ is motivic, then by \cite[2.4.50]{CD12}, $H$ satisfies the Grothendieck six operations formalism in [loc.\ cit].
\end{none}
\section{Proof of (\ref{0.4}), part II}
\begin{none}\label{4.1}
  Under the notations and hypotheses of (\ref{2.5}), consider the diagram
  \[\begin{tikzcd}
    \mathcal{E}\arrow[rd,"V"']\arrow[r,"U"]&\widetilde{\mathcal{C}}\\
    &{\rm Tri}^\otimes
  \end{tikzcd}\]
  of $2$-categories where $V$ is a $\mathscr{P}'$-premotivic pseudofunctor satisfying (Loc). One purpose of this section is to show that the diagram can be extended to a commutative diagram
  \begin{equation}\label{4.1.1}\begin{tikzcd}
    \mathcal{E}\arrow[rd,"V"']\arrow[r,"U"]&\widetilde{\mathcal{C}}\arrow[d,"G"]\arrow[ld,phantom,"{\rotatebox[origin=c]{0}{$\Leftrightarrow$}}_{\delta}",very near start]\\
    \,&{\rm Tri}^\otimes
  \end{tikzcd}\end{equation}
  of $2$-categories where $G$ is a $\widetilde{\mathscr{P}}$-premotivic pseudofunctor satisfying (Loc) and $\delta:V\rightarrow G\circ U$ is a $\mathscr{P}'$-premotivic pseudonatural transformation.
\end{none}
\begin{df}\label{4.2}
  Under the notations and hypotheses of (\ref{2.5}), for any closed immersion $i_1:Z_1\rightarrow S_1$, we denote by $\rho_{i_1}$ the closed immersion in $\widetilde{\mathcal{C}}$ that is isomorphic to a commutative diagram
  \[\begin{tikzcd}
    (Z_1\arrow[d,"i_1"]\arrow[r,"i_1"]&S_1)\arrow[d,"{\rm id}"]\\
    (S_1\arrow[r,"{\rm id}"]&S_1)
  \end{tikzcd}\]
  in $\mathcal{C}$.
\end{df}
\begin{none}\label{4.3}
  Under the notations and hypotheses of (\ref{4.1}), for any object $(Z_1\stackrel{i_1}\rightarrow S_1)$ of $\widetilde{\mathcal{C}}$, put $G(Z_1\rightarrow S_1)$ as the full subcategory of $V(S_1)$ consisting of objects $K$ such that $j_1^*K=0$ where $j_1:U_1\rightarrow S_1$ denotes the complement of $i_1$. Since $j_1^*$ is triangulated, $G(Z_1\rightarrow S_1)$ is also triangulated, and since $j_1^*$ is monoidal, $G(Z_1\rightarrow S_1)$ has the symmetric closed monoidal structure induced from that of $V(S_1)$.

  Then we denote by $\rho_{i_1*}$ the embedding
  \[G(Z_1\rightarrow S_1)\rightarrow V(S_1).\]
  Now, we will prove some results about $\rho_{i_1*}$ as follows.
\end{none}
\begin{prop}\label{4.5}
  Under the notations and hypotheses of {\rm (\ref{4.3})},
  \begin{enumerate}[{\rm (1)}]
    \item the functor $\rho_{i_1*}$ has a left adjoint, denoted by $\rho_{i_1}^*$, and the functor $\rho_{i_1*}$ has a right adjoint, denoted by $\rho_{i_1}^!$,
    \item if we denote by $j_1$ the complement of $i_1$, then we have a distinguished triangle
    \[j_{1\sharp}j_1^*\stackrel{ad}\longrightarrow {\rm id}\stackrel{ad'}\longrightarrow \rho_{i_1*}\rho_{i_1}^*\longrightarrow j_{1\sharp}j_1^*[1],\]
    \item the counit
    \[\rho_{i_1}^*\rho_{i_1*}\stackrel{ad'}\longrightarrow {\rm id}\]
    is an isomorphism,
    \item $\rho_{i_1}^*j_{1\sharp}=0$.
  \end{enumerate}
\end{prop}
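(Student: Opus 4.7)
The strategy is to realize $G(Z_1\to S_1)$ as both a reflective and a coreflective full subcategory of $V(S_1)$, using the two localization distinguished triangles from (\ref{2.7}(2)) and (\ref{2.7}(3)) which are available because $V$ satisfies {\rm (Loc)}. First I would establish the auxiliary identity $j_1^*i_{1*}=0$: applying $j_1^*$ to the triangle (\ref{2.7}(2)) and using (\ref{2.7}(5)) to see that the leftmost arrow becomes an isomorphism gives $j_1^*i_{1*}i_1^*=0$; since $i_{1*}$ is fully faithful we have $i_1^*i_{1*}\cong\mathrm{id}$, and precomposing with $i_{1*}$ yields $j_1^*i_{1*}=0$. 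In particular $i_{1*}i_1^*K$ and $i_{1*}i_1^!K$ (the latter using the right adjoint $i_1^!$ supplied by (\ref{2.7}(1))) both land inside the subcategory $G(Z_1\to S_1)$.

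I would then define $\rho_{i_1}^*:=i_{1*}i_1^*$ and $\rho_{i_1}^!:=i_{1*}i_1^!$, viewed as functors $V(S_1)\to G(Z_1\to S_1)$. For any $L\in G(Z_1\to S_1)$ we have $j_1^*L=0$, so by the adjunctions $j_{1\sharp}\dashv j_1^*\dashv j_{1*}$ both $\mathrm{Hom}_{V(S_1)}(j_{1\sharp}j_1^*K,L)$ and $\mathrm{Hom}_{V(S_1)}(L,j_{1*}j_1^*K)$ vanish. Applying $\mathrm{Hom}(-,L)$ to the triangle (\ref{2.7}(2)) then produces the bijection $\mathrm{Hom}(i_{1*}i_1^*K,L)\cong\mathrm{Hom}(K,L)$, which is exactly the adjunction $\rho_{i_1}^*\dashv\rho_{i_1*}$; applying $\mathrm{Hom}(L,-)$ to the triangle (\ref{2.7}(3)) produces $\mathrm{Hom}(L,i_{1*}i_1^!K)\cong\mathrm{Hom}(L,K)$, which is the adjunction $\rho_{i_1*}\dashv\rho_{i_1}^!$. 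This establishes (1).

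Item (2) is then immediate from the definition $\rho_{i_1*}\rho_{i_1}^*=i_{1*}i_1^*$ together with (\ref{2.7}(2)). For (3), since $\rho_{i_1*}$ is the inclusion of a full subcategory it is fully faithful, so the counit of $\rho_{i_1}^*\dashv\rho_{i_1*}$ is automatically an isomorphism; directly, applying (2) to $L\in G(Z_1\to S_1)$ and noting $j_{1\sharp}j_1^*L=0$ shows that the unit $L\to\rho_{i_1*}\rho_{i_1}^*L$ is an isomorphism, so by the triangle identities its counit is an isomorphism as well. Finally (4) follows from (\ref{2.7}(4)): $\rho_{i_1}^*j_{1\sharp}=i_{1*}i_1^*j_{1\sharp}=0$.

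I do not foresee any substantive obstacle. The only step that is not completely formal is the preliminary identity $j_1^*i_{1*}=0$, which is easy to overlook since it is not listed explicitly among the consequences of {\rm (Loc)} in (\ref{2.7}); it is however an immediate consequence of (\ref{2.7}(2)), (\ref{2.7}(5)), and the full faithfulness of $i_{1*}$, and once it is in hand every remaining step reduces to routine manipulation of the two localization triangles.
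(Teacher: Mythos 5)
There is a fundamental obstruction to your approach. Recall from (\ref{4.1}) that $V$ is a pseudofunctor defined only on $\mathcal{E}$ (in the application, $\mathscr{S}^{sm}$), and the whole point of introducing $\widetilde{\mathcal{C}}$ and $G$ is to extend to objects $(Z_1\stackrel{i_1}{\to}S_1)$ whose source $Z_1$ is generally \emph{not} in $\mathcal{E}$ (closed subschemes of smooth schemes need not be smooth). Consequently $V(Z_1)$ is undefined, and the functors $i_{1*}$, $i_1^*$, $i_1^!$ you use in your formulas $\rho_{i_1}^*:=i_{1*}i_1^*$ and $\rho_{i_1}^!:=i_{1*}i_1^!$ do not exist. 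For the same reason, the localization triangles (\ref{2.7}(2)) and (\ref{2.7}(3)) for $i_1$ are not available: they involve $i_{1*}$, $i_1^*$, $i_1^!$, and (Loc) for $V$ only applies to closed immersions \emph{inside} $\mathcal{E}$. Your proof circularly assumes that the very object the construction is supposed to produce (a substitute for $V(Z_1)$ together with its adjoint pair to $V(S_1)$) already exists. The auxiliary step $j_1^*i_{1*}=0$ suffers the same problem, and so does your verification of (4) via $i_{1*}i_1^*j_{1\sharp}=0$.

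The paper's proof sidesteps this entirely by working only with the open complement $j_1:U_1\to S_1$, which \emph{does} lie in $\mathcal{E}$ thanks to condition (ii) of (\ref{2.5}). For each $K$ in $V(S_1)$ one \emph{chooses} a cone $\rho_{i_1}^*K$ of the counit $j_{1\sharp}j_1^*K\stackrel{ad'}{\longrightarrow}K$; the unit $\mathrm{id}\to j_1^*j_{1\sharp}$ being an isomorphism (a consequence of (B--1)--(B--4) applied to the open immersion $j_1$, cf.\ (\ref{2.7}(5))) forces $j_1^*\rho_{i_1}^*K=0$, i.e.\ $\rho_{i_1}^*K\in G(Z_1\to S_1)$. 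The nontrivial content is then promoting the objectwise choice of cones to a functor and an adjunction: this requires the uniqueness of the comparison morphism between cones, which follows from ${\rm Hom}_{V(S_1)}(j_{1\sharp}j_1^*K[1],\rho_{i_1*}\rho_{i_1}^*L)=0$ because $j_1^*\rho_{i_1*}=0$. That uniqueness argument is the genuine work in the proof; a proof that begins from an already-existing adjunction $i_1^*\dashv i_{1*}$ cannot even get started here.

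Your intuition that $\rho_{i_1*}\rho_{i_1}^*$ should play the role of $i_{1*}i_1^*$ is of course the right picture to have in mind, and if $Z_1$ happened to lie in $\mathcal{E}$ your computation would be a valid shortcut. But in the setting of (\ref{4.3}) the argument must be built from $j_1$ alone.
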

\begin{proof}
  Let us first show that $\rho_{i_1*}$ has a right adjoint denoted by $\rho_{i_1}^*$. For any object $K$ of $V(S_1)$, {\it choose} a cone of the counit
  \[j_{1\sharp}j_1^*K\stackrel{ad'}\longrightarrow K\]
  in $V(S_1)$, and it is denoted by $\rho_{i_1}^*K$. Since $j_1^*\rho_{i_1}^*K=0$ by (\ref{2.7}(4)), $\rho_{i_1}^*K$ is in $G(Z_1\rightarrow S_1)$. We also fix a distinguished triangle
  \begin{equation}\label{4.4.1}
    j_{1\sharp}j_1^*K\stackrel{ad'}\longrightarrow K\longrightarrow \rho_{i_1*}\rho_{i_1}^*K\longrightarrow j_{1\sharp}j_1^*K[1]
  \end{equation}
  in $V(S_1)$, and we denote by $ad$ (resp. $\delta_K$) the second arrow (resp.\ third arrow).

  Then let us define $\rho_{i_1}^*a$ for any morphism $a:K\rightarrow L$ in $V(S_1)$. Consider the commutative diagram
  \[\begin{tikzcd}
    j_{1\sharp}j_1^*K\arrow[r,"ad'"]\arrow[d,"a"]&K\arrow[d,"ad"]\arrow[r,"ad"]&\rho_{i_1*}\rho_{i_1^*}K\arrow[r,"\delta_K"]&j_{1\sharp}j_1^*K[1]\arrow[d,"a"]\\
    j_{1\sharp}j_1^*L\arrow[r,"ad'"]&L\arrow[r,"ad"]&\rho_{i_1*}\rho_{i_1^*}L\arrow[r,"\delta_L"]&j_{1\sharp}j_1^*K[1]
  \end{tikzcd}\]
  in $V(S_1)$ where the rows are the distinguished triangles. We will show that there is a unique morphism
  \[b:\rho_{i_1*}\rho_{i_1^*}K\longrightarrow \rho_{i_1*}\rho_{i_1^*}L\]
  in $V(S_1)$ making the above diagram commutative. This will be the definition of $\rho_{i_1}^*a$, and the functoriality of $\rho_{i_1}^*$ follows from the uniqueness.

  The existence of $b$ is obtained by an axiom of triangulated categories, so the remaining is the uniqueness of $b$. If we have two morphisms
  \[b,b':\rho_{i_1*}\rho_{i_1^*}K\longrightarrow \rho_{i_1*}\rho_{i_1^*}L\]
  making the above diagram commutative, put $c=b-b'$. Since $c\circ {ad}=0$, we have $c=d\delta_K$ for some morphism
  \[d:j_{1\sharp}j_1^*K[1]\rightarrow \rho_{i_1*}\rho_{i_1}^*L\]
  in $V(S_1)$. Hence to show $b=b'$, it suffices to show
  \[{\rm Hom}_{V(S_1)}(j_{1\sharp}j_1^*K[1],\rho_{i_1*}\rho_{i_1}^*L)=0,\]
  which is true since $j_1^*\rho_{i_1*}=0$.

  Thus $b$ is unique, so $\rho_{i_1}^*$ has a structure of functor. We also obtain the fact that $ad:{\rm id}\longrightarrow \rho_{i*}\rho_i^*$ is functorial. Now, we will show that $\rho_{i_1}^*$ is left adjoint to $\rho_{i_1*}$. For any object $K$ of $G(Z_1\rightarrow S_1)$, the morphism
  \[\rho_{i_1*}K\stackrel{ad}\longrightarrow \rho_{i_1*}\rho_{i_1}^*\rho_{i_1*}K\]
  is an isomorphism since its cone is $j_{1\sharp}j_1^*\rho_{i_1*}K[1]$, which is zero. Since $\rho_{i_1*}$ is fully faithful, the inverse of the above morphism induces the isomorphism
  \[\rho_{i_1}^*\rho_{i_1*}K\longrightarrow K,\]
  in $G(Z_1\rightarrow S_1)$. It is denoted by $ad'$. Since $ad$ is functorial, $ad'$ is also functorial. From construction, the compositions
  \[\rho_{i_1}^*\stackrel{ad}\longrightarrow \rho_{i_1}^*\rho_{i_1*}\rho_{i_1}^*\stackrel{ad'}\longrightarrow \rho_{i_1}^*\]
  \[\rho_{i_1*}\stackrel{ad}\longrightarrow \rho_{i_1*}\rho_{i_1}^*\rho_{i_1*}\stackrel{ad'}\longrightarrow \rho_{i_1*}\]
  are isomorphisms. Thus $\rho_{i_1}^*$ is right adjoint to $\rho_{i_1*}$. By a similar method, $\rho_{i_1*}$ has a left adjoint, denoted by $\rho_{i_1}^!$.

  Applying $\rho_{i_1}^*$ to the distinguished triangle (\ref{4.4.1}) and using the fact that the natural transformation
  \[\rho_{i_1}^*\rho_{i_1*}\stackrel{ad'}\longrightarrow {\rm id}\]
  is an isomorphism, we have $\rho_{i_1}^*j_{1\sharp}=0$.
\end{proof}
\begin{none}\label{4.7}
  Under the notations and hypotheses of (\ref{4.3}), consider a morphism $\widetilde{X_2}\stackrel{f_1}\rightarrow \widetilde{X_1}$ given by a commutative diagram
  \begin{equation}\label{4.7.1}\begin{tikzcd}
    (Z_2\arrow[r,"i_2"]\arrow[d,"v_1"]&S_2\arrow[d,"u_1"])\\
    (Z_1\arrow[r,"i_1"]&S_1)
  \end{tikzcd}\end{equation}
  in $\mathcal{C}$. We denote by $j_1$ (resp.\ $j_2$) the complement of $i_1$ (resp.\ $i_2$). Then we have the commutative diagram
  \[\begin{tikzcd}
    S_2\arrow[d,"u_1"]\arrow[r,"j_2",leftarrow]&U_2\arrow[r,"j_2'",leftarrow]&S_2\times_{S_1}U_1\arrow[d,"w_1'"]\\
    S_1\arrow[rr,"j_1",leftarrow]&&U_1
  \end{tikzcd}\]
  in $\mathcal{C}$ where $w_1'$ denotes the projections and $j_2'$ is obtained by (D--4). By (B--3) and (\ref{4.5}), we have
  \[\rho_{i_2}^*u_1^*j_{1\sharp}\cong \rho_{i_2}^*j_{2\sharp}j_{2\sharp}'w_1'^*=0.\]
  Thus the natural transformation
  \[\rho_{i_2}^*u_1^*\stackrel{ad}\longrightarrow \rho_{i_2}^*u_1^*\rho_{i_1*}\rho_{i_1}^*\]
  is an isomorphism since its cone is $0$ by (\ref{4.5}).

  When (\ref{4.7.1}) is Cartesian, we have the Cartesian diagram
  \[\begin{tikzcd}
    S_2\arrow[d,"u_1"]\arrow[r,"j_2",leftarrow]&U_2\arrow[d,"w_1"]\\
    S_1\arrow[r,"j_1"]&U_1
  \end{tikzcd}\]
  in $\mathcal{C}$. Then we have
  \[j_2^*u_1^*\rho_{i_1*}\cong w_1^*j_1^*\rho_{i_1*}=0.\]
  Thus by (\ref{4.5}), the natural transformation
  \[u_1^*\rho_{i_1*}\stackrel{ad}\longrightarrow \rho_{i_2*}\rho_{i_2}^*u_1^*\rho_{i_1*}\]
  is an isomorphism. Thus we have proven the following result.
\end{none}
\begin{prop}\label{4.8}
  Under the notations and hypotheses of {\rm (\ref{4.7})},
  \begin{enumerate}[{\rm (1)}]
    \item the natural transformation
    \[\rho_{i_2}^*u_1^*\stackrel{ad}\longrightarrow \rho_{i_2}^*u_1^*\rho_{i_1*}\rho_{i_1}^*\]
    is an isomorphism,
    \item when {\rm (\ref{4.7.1})} is Cartesian, the natural transformation
  \[u_1^*\rho_{i_1*}\stackrel{ad}\longrightarrow \rho_{i_2*}\rho_{i_2}^*u_1^*\rho_{i_1*}\]
  is an isomorphism.
  \end{enumerate}
\end{prop}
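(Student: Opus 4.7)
The plan is to derive both isomorphisms from the fundamental localization triangle
\[j_{1\sharp}j_1^*\stackrel{ad'}\longrightarrow {\rm id}\stackrel{ad}\longrightarrow \rho_{i_1*}\rho_{i_1}^*\longrightarrow j_{1\sharp}j_1^*[1]\]
of (\ref{4.5})(2), applied together with the two vanishing results that make the whole setup work: $\rho_{i_1}^*j_{1\sharp}=0$ from (\ref{4.5})(4), and $j_1^*\rho_{i_1*}=0$ which is immediate from the definition of $G(Z_1\to S_1)$ as the full subcategory killed by $j_1^*$. In both parts, the relevant unit map sits inside the image of a triangulated functor applied to the above triangle, so verifying that an isomorphism is produced reduces to checking that the third vertex of the resulting distinguished triangle vanishes.

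For part (1), I would apply the triangulated functor $\rho_{i_2}^*u_1^*$ to the localization triangle at an arbitrary object and read off that the cone of $\rho_{i_2}^*u_1^*\rightarrow \rho_{i_2}^*u_1^*\rho_{i_1*}\rho_{i_1}^*$ is $\rho_{i_2}^*u_1^*j_{1\sharp}j_1^*[1]$, so it suffices to show $\rho_{i_2}^*u_1^*j_{1\sharp}=0$. Here the diagram (\ref{4.7.1}) is not assumed Cartesian, so one cannot directly base-change $j_1$ along $u_1$; instead, axiom (D--4) supplies the factorization $S_2\times_{S_1}U_1\stackrel{j_2'}\rightarrow U_2\stackrel{j_2}\rightarrow S_2$ and a genuinely Cartesian square involving $j_1$ and the projection $w_1':S_2\times_{S_1}U_1\rightarrow U_1$. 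Then (B--3) gives $u_1^*j_{1\sharp}\cong j_{2\sharp}j_{2\sharp}'w_1'^*$, and (\ref{4.5})(4) applied to $\rho_{i_2}^*j_{2\sharp}=0$ finishes it off.

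For part (2), where (\ref{4.7.1}) is Cartesian, I would instead evaluate the localization triangle (\ref{4.5})(2) for $i_2$ at the object $u_1^*\rho_{i_1*}K$: the unit $u_1^*\rho_{i_1*}\rightarrow \rho_{i_2*}\rho_{i_2}^*u_1^*\rho_{i_1*}$ has cone $j_{2\sharp}j_2^*u_1^*\rho_{i_1*}[1]$. Now the Cartesian hypothesis is used: (B--3) for the Cartesian square formed by $j_1,u_1,j_2,w_1$ yields $j_2^*u_1^*\cong w_1^*j_1^*$, and composing with $\rho_{i_1*}$ gives zero because $j_1^*\rho_{i_1*}=0$ by definition of $G(Z_1\to S_1)$.

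The only substantive step is the first one; the second part is a direct base change followed by a vanishing. The main obstacle to the first part is precisely that, absent the Cartesian hypothesis, one has to leverage (D--4) to produce the open immersion $j_2'$ and reduce the non-Cartesian situation to an application of (B--3) on an auxiliary Cartesian square, so that the $\sharp$-pushforward can be extracted and annihilated by $\rho_{i_2}^*$.
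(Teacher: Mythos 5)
Your proposal is correct and follows essentially the same argument as the paper: part (1) uses (D--4) to produce the auxiliary Cartesian square, applies (B--3) to rewrite $u_1^*j_{1\sharp}$, and kills the cone with $\rho_{i_2}^*j_{2\sharp}=0$ from (\ref{4.5}); part (2) uses base change on the genuine Cartesian square and $j_1^*\rho_{i_1*}=0$. The only cosmetic difference is that you make the cone computation via the localization triangle fully explicit, whereas the paper phrases it slightly more tersely.
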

\begin{none}\label{4.6}
  Now, we will construct $G$ introduced in (\ref{4.1}). Under the notations and hypotheses of (\ref{4.3}), for any morphism $\widetilde{f_1}$ in $\mathscr{C}$ given by a commutative diagram
  \[\begin{tikzcd}
    (Z_2\arrow[r,"i_2"]\arrow[d,"v_1"]&S_2\arrow[d,"u_1"])\\
    (Z_1\arrow[r,"i_1"]&S_1)
  \end{tikzcd}\]
  in $\mathcal{C}$, put
  \[G(\widetilde{f_1}):=\rho_{i_2}^*u_1^*\rho_{i_1*}.\]
  It is also denoted by $\widetilde{f_1}^*$ following the convention in (\ref{3.7}). We will first construct a monoidal structure for $G(\widetilde{f_1})$. Since $u_1^*$ and $\rho_{i_1*}$ are monoidal, it suffices to construct a monoidal structure for $\rho_{i_2}^*$. Let $j_2:U_2\rightarrow S_2$ denotes the complement of $i_2$, and let $K$ and $L$ be objects of $V(S_2)$. Consider the distinguished triangle
  \[\rho_{i_2*}\rho_{i_2}^*K\otimes_{S_2} j_{2\sharp}j_2^*L\stackrel{ad'}\longrightarrow \rho_{i_2*}\rho_{i_2}^*K\otimes_{S_2} L\stackrel{ad}\longrightarrow \rho_{i_2*}\rho_{i_2}^*K\otimes_{S_2} \rho_{i_2*}\rho_{i_2}^*L\longrightarrow \rho_{i_2*}\rho_{i_2}^*K\otimes_{S_2} j_{2\sharp}j_2^*L[1].\]
  in $V(S_2)$. The first object is zero by (B--4) for $V$ since $j_2^*\rho_{i_2*}=0$. Thus the natural transformation
  \begin{equation}\label{4.6.2}
    \rho_{i_2*}\rho_{i_2}^*K\otimes_{S_2} L\stackrel{ad}\longrightarrow \rho_{i_2*}\rho_{i_2}^*K\otimes_{S_2} \rho_{i_2*}\rho_{i_2}^*L
  \end{equation}
  is an isomorphism. Then consider the commutative diagram
  \[\begin{tikzcd}
    j_{2\sharp}j_2^*(K\otimes_{S_2} L)\arrow[d]\arrow[r,"ad'"]&K\otimes_{S_2} L\arrow[d,"{\rm id}"]\arrow[r,"ad"]&\rho_{i_2*}\rho_{i_2}^*(K\otimes_{S_2}L)\arrow[r]&j_{2\sharp}j_2^*(K\otimes_{S_2} L)\arrow[d][1]\\
    j_{2\sharp}j_2^*K\otimes_{S_2} L\arrow[r,"ad'"]&K\otimes_{S_2}L\arrow[r,"ad"]&\rho_{i_2*}\rho_{i_2}^*K\otimes_{S_2}L\arrow[r]&j_{2\sharp}j_2^*K\otimes_{S_2} L[1]
  \end{tikzcd}\]
  in $V(S_2)$ where the left side and the right side vertical arrows are given by
  \[j_{2\sharp}j_2^*(K\otimes_{S_2} L)\stackrel{\sim}\longrightarrow j_{2\sharp}(j_2^*K\otimes_{U_2}j_2^*L)\stackrel{Ex}\longrightarrow j_{2\sharp}j_2^*K\otimes_{S_2}L.\]
  By the proof of (\ref{4.5}) (more precisely, the proof of uniqueness of $b$ in (loc.\ cit)), there is a unique isomorphism
  \[\rho_{i_2*}\rho_{i_2}^*(K\otimes_{S_2}L)\longrightarrow \rho_{i_2*}\rho_{i_2}^*K\otimes_{S_2}L\]
  in $V(S_1)$ making the above diagram commutes. With (\ref{4.6.2}), we obtain the isomorphism
  \[\rho_{i_2*}\rho_{i_2}^*(K\otimes_{S_2}L)\longrightarrow \rho_{i_2*}\rho_{i_2}^*K\otimes_{S_2}\rho_{i_2*}\rho_{i_2}^*L\]
  in $V(S_1)$. Since $\rho_{i_2*}$ is monoidal and fully faithful, we obtain the isomorphism
  \[\rho_{i_2}^*(K\otimes_{S_2}L)\longrightarrow \rho_{i_2}^*K\otimes_{Z_2}\rho_{i_2}^*L\]
  in $G(\widetilde{X_2})$. This satisfies the coherence condition by the proof of (\ref{4.5}) again.
\end{none}
\begin{none}\label{4.12}
   Let us continue the argument in (\ref{4.6}). We will show that $G$ has a contravariant pseudofunctor structure. For any object $\widetilde{X_1}=(Z_1\stackrel{i_1}\rightarrow S_1)$ in $\widetilde{\mathcal{C}}$, consider the identity morphism ${\rm id}_{\widetilde{X_1}}:\widetilde{X_1}\rightarrow \widetilde{X_1}$ in $\widetilde{\mathcal{C}}$. The natural transformation
  \[G_{{\rm id}_{\widetilde{X_1}}}:{\rm id}_{\widetilde{X_1}}^*\longrightarrow {\rm id}\]
  is given by
  \[\rho_{i_1}^*\rho_{i_1*}\stackrel{ad'}\longrightarrow {\rm id},\]
  which is an isomorphism by (\ref{4.5}). For any morphisms $\widetilde{X_3}\stackrel{f_2}\longrightarrow \widetilde{X_2}\stackrel{f_1}\longrightarrow \widetilde{X_1}$ in $\widetilde{\mathcal{C}}$ given by a diagram
  \[\begin{tikzcd}
    (Z_3\arrow[d,"v_2"]\arrow[r,"i_3"]&S_3\arrow[d,"u_2"])\\
    (Z_2\arrow[r,"i_2"]\arrow[d,"v_1"]&S_2\arrow[d,"u_1"])\\
    (Z_1\arrow[r,"i_1"]&S_1)
  \end{tikzcd}\]
  in $\mathcal{C}$, the natural isomorphism
  \[G_{f_1,f_2}:\widetilde{f_2}^*\widetilde{f_1}^*\longrightarrow (\widetilde{f_1}\widetilde{f_2})^*\]
  is given by
  \[\rho_{i_3}^*u_2^*\rho_{i_2*}\rho_{i_2}^*u_1^*\rho_{i_1*}\stackrel{ad^{-1}}\longrightarrow \rho_{i_3}^*u_2^*u_1^*\rho_{i_1*}\stackrel{\sim}\longrightarrow \rho_{i_3}^*(u_1u_2)^*\rho_{i_1*}.\]
  Here, the first arrow is defined and an isomorphism by (\ref{4.8}). Now, we will verify the axioms of psedofunctors for $G$ as follows.
  \begin{enumerate}[(1)]
    \item Consider morphisms
    \[\widetilde{X_2}\stackrel{f_1}\longrightarrow \widetilde{X_1}\stackrel{{\rm id}_{\widetilde{X_1}}}\longrightarrow \widetilde{X_1}\]
    in $\mathscr{C}$ given by a commutative diagram
      \[\begin{tikzcd}
    (Z_2\arrow[d,"v_1"]\arrow[r,"i_2"]&S_2\arrow[d,"u_1"])\\
    (Z_1\arrow[r,"i_1"]\arrow[d,"{\rm id}"]&S_1\arrow[d,"{\rm id}"])\\
    (Z_1\arrow[r,"i_1"]&S_1)
    \end{tikzcd}\]
    in $\mathcal{C}$. We have to show that the diagram
    \[\begin{tikzcd}
      \widetilde{f_1}^*{\rm id}_{\widetilde{X_1}}^*\arrow[r,"G_{{\rm id}_{\widetilde{X_1}}}"]\arrow[d,"G_{{\rm id}_{\widetilde{X_1}},f_1}"]&\widetilde{f_1}^*\arrow[d,equal]\\
      ({\rm id}_{\widetilde{X_1}}\widetilde{f_1})^*\arrow[r,equal]&\widetilde{f_1}^*
    \end{tikzcd}\]
    of functors commutes. It is the diagram
    \[\begin{tikzcd}
      \rho_{i_2}^*u_1^*\rho_{i_1*}\rho_{i_1}^*\rho_{i_1*}\arrow[r,"ad'"]\arrow[d,"ad^{-1}"]&\rho_{i_2}^*u_1^*\rho_{i_1*}\arrow[d,equal]\\
      \rho_{i_2}^*u_1^*\rho_{i_1*}\arrow[r,equal]&\rho_{i_2}^*u_1^*\rho_{i_1*}
    \end{tikzcd}\]
    of functors, which commutes.
    \item Consider morphisms $\widetilde{X_2}\stackrel{{\rm id}_{\widetilde{X_2}}}\longrightarrow \widetilde{X_2}\stackrel{\widetilde{f_1}}\longrightarrow \widetilde{X_1}$ in $\widetilde{C}$. As in the above argument, the diagram
    \[\begin{tikzcd}
      {\rm id}_{\widetilde{X_2}}^*f_1^*\arrow[d,"G_{f_1,{\rm id}_{\widetilde{X_2}}}"]\arrow[r,"G_{{\rm id}_{\widetilde{X_2}}}"]&\widetilde{f_1}^*\arrow[d,equal]\\
      (f_1{\rm id}_{\widetilde{X_2}})^*\arrow[r,equal]&f_1^*
    \end{tikzcd}\]
    of functors commutes.
    \item Consider morphisms
    \[\widetilde{X_4}\stackrel{\widetilde{f_3}}\rightarrow \widetilde{X_3}\stackrel{\widetilde{f_2}}\rightarrow \widetilde{X_2} \stackrel{\widetilde{f_1}}\rightarrow \widetilde{X_1}\]
    in $\mathcal{C}$ given by a commutative diagram
    \[\begin{tikzcd}
    (Z_4\arrow[d,"v_3"]\arrow[r,"i_4"]&S_4\arrow[d,"u_3"])\\
    (Z_3\arrow[d,"v_2"]\arrow[r,"i_3"]&S_3\arrow[d,"u_2"])\\
    (Z_2\arrow[r,"i_2"]\arrow[d,"v_1"]&S_2\arrow[d,"u_1"])\\
    (Z_1\arrow[r,"i_1"]&S_1)
    \end{tikzcd}\]
    in $\mathcal{C}$. We have to show that the diagram
    \begin{equation}\label{4.12.1}\begin{tikzcd}
      \widetilde{f_3}^*\widetilde{f_2}^*\widetilde{f_1}^*\arrow[r,"G_{\widetilde{f_1},\widetilde{f_2}}"]\arrow[d,"G_{\widetilde{f_2},\widetilde{f_3}}"]&\widetilde{f_3}^* (\widetilde{f_1}\widetilde{f_2})^*\arrow[d,"G_{\widetilde{f_1}\widetilde{f_2},\widetilde{f_3}}"]\\
      (\widetilde{f_2}\widetilde{f_3})^*\widetilde{f_1}^*\arrow[r,"G_{\widetilde{f_1},\widetilde{f_2}\widetilde{f_3}}"]&(\widetilde{f_1}\widetilde{f_2}\widetilde{f_3})^*
    \end{tikzcd}\end{equation}
    of functors commutes. It is the big outside diagram of the diagram
    \[\begin{tikzcd}
      \rho_{i_4}^*u_3^*\rho_{i_3*}\rho_{i_3}^*u_2^*\rho_{i_2*}\rho_{i_2}^*u_1^*\rho_{i_1*}\arrow[d,"ad^{-1}"]\arrow[r,"ad^{-1}"]&
      \rho_{i_4}^*u_3^*\rho_{i_3*}\rho_{i_3}^*u_2^*u_1^*\rho_{i_1*}\arrow[r,"\sim"]\arrow[d,"ad^{-1}"]&
      \rho_{i_4}^*u_3^*\rho_{i_3*}\rho_{i_3}^*(u_1u_2)^*\rho_{i_1*}\arrow[d,"ad^{-1}"]\\
      \rho_{i_4}^*u_3^*u_2^*\rho_{i_2*}\rho_{i_2}^*u_1^*\rho_{i_1*}\arrow[d,"\sim"]\arrow[r,"ad^{-1}"]&
      \rho_{i_4}^*u_3^*u_2^*u_1^*\rho_{i_1*}\arrow[d,"\sim"]\arrow[r,"\sim"]&
      \rho_{i_4}^*u_3^*(u_1u_2)^*\rho_{i_1*}\arrow[d,"\sim"]\\
      \rho_{i_4}^*(u_2u_3)^*\rho_{i_2*}\rho_{i_2}^*u_1^*\rho_{i_1*}\arrow[r,"ad^{-1}"]&
      \rho_{i_4}^*(u_2u_3)^*u_1^*\rho_{i_1*}\arrow[r,"\sim"]&
      \rho_{i_4}^*(u_1u_2u_3)^*\rho_{i_1*}
    \end{tikzcd}\]
    of functors, whose small diagrams commute. Here, the arrows denoted by $ad^{-1}$ are defined and isomorphisms by (\ref{4.8}). Thus (\ref{4.12.1}) commutes.
  \end{enumerate}
  Thus we have verified all axioms of pseudofunctors for $G$, so $G$ is a pseudofunctor.
\end{none}
\begin{none}\label{4.10}
  Let us interpret (\ref{4.8}(2)) as follows. Consider a morphism $\widetilde{f_1}:\widetilde{X_2}\rightarrow \widetilde{X_1}$ given by a Cartesian diagram
  \[\begin{tikzcd}
    (Z_2\arrow[r,"i_2"]\arrow[d,"v_1"]&S_2\arrow[d,"u_1"])\\
    (Z_1\arrow[r,"i_1"]&S_1)
  \end{tikzcd}\]
  in $\mathcal{C}$. Then we have the Cartesian diagram
  \[\begin{tikzcd}
    \widetilde{X_2}\arrow[r,"\rho_{i_2}"]\arrow[d,"\widetilde{f_1}"]&S_2\arrow[d,"u_1"]\\
    \widetilde{X_1}\arrow[r,"\rho_{i_1}"]&S_1
  \end{tikzcd}\]
  in $\widetilde{\mathcal{C}}$. Using $G$ constructed in (\ref{4.12}), we have the exchange transformation
  \[u_1^*\rho_{i_1*}\stackrel{Ex}\longrightarrow \rho_{i_2*}\widetilde{f_1}^*\]
  is given by
  \[u_1^*\rho_{i_1*}\stackrel{ad}\longrightarrow \rho_{i_2*}\rho_{i_2}^*u_1^*\rho_{i_1*}\stackrel{G_{u_1,\rho_{i_2}}}\longrightarrow \rho_{i_2*}\widetilde{f_1}^*\rho_{i_1}^*\rho_{i_1*} \stackrel{ad'}\longrightarrow \rho_{i_2}^*\widetilde{f_1}^*.\]
  The first arrow is an isomorphism by (\ref{4.8}(2)), and the second arrow is an isomorphism since $G$ is a pseudofunctor. The third arrow is also an isomorphism by (\ref{4.5}(3)). Thus the exchange transformation is an isomorphism.
\end{none}
\begin{none}\label{4.9}
  Let us continue the argument in (\ref{4.12}). So far, we have constructed the contravariant pseudofunctor $G$. Now, we will show that $G$ is $\widetilde{\mathscr{P}}$-premotivic by verifying the axioms from (B--1) to (B--4) as follows.
  \begin{enumerate}[(1)]
    \item For any morphism $\widetilde{f_1}$ given by a commutative diagram
      \[\begin{tikzcd}
        (Z_2\arrow[r,"i_2"]\arrow[d,"v_1"]&S_2\arrow[d,"u_1"])\\
        (Z_1\arrow[r,"i_1"]&S_1)
      \end{tikzcd}\]
      in $\mathcal{C}$, the right adjoint of $\widetilde{f_1}^*$ is $\rho_{i_1}^!u_{1*}\rho_{i_2*}$. Thus (B--1) holds for $G$.
    \item For any $\widetilde{\mathscr{P}}$-morphism $\widetilde{f_1}$ in $\widetilde{C}$ given by a Cartesian diagram
    \[\begin{tikzcd}
        (Z_2\arrow[r,"i_2"]\arrow[d,"v_1"]&S_2\arrow[d,"u_1"])\\
        (Z_1\arrow[r,"i_1"]&S_1)
      \end{tikzcd}\]
    in $\mathcal{C}$, we will show that the left adjoint of $\widetilde{f_1}^*$ is $\rho_{i_1}^*u_{1\sharp}\rho_{i_2*}$. For this, we will show that $\widetilde{f_1}^*$ is isomorphic to
    \[\rho_{i_2}^!u_1^*\rho_{i_1*}.\]
    Consider the Cartesian diagram
    \[\begin{tikzcd}
      (Z_2\arrow[r,"i_2"]\arrow[d,"v_1"]&S_2\arrow[d,"u_1"])\\
      (Z_1\arrow[r,"i_1"]&S_1)
    \end{tikzcd}\]
    where $j_1$ (resp.\ $j_2$) denotes the complement of $i_1$ (resp.\ $i_2$). Since the natural transformations
    \[\rho_{i_2}^*\rho_{i_2*}\stackrel{ad'}\longrightarrow {\rm id},\quad {\rm id}\stackrel{ad}\longrightarrow \rho_{i_2}^!\rho_{i_2*}\]
    are isomorphisms by (\ref{4.5}(3)), we only need to show that the essential image of $u_1^*\rho_{i_1*}$ is in $G(Z_2\rightarrow S_2)$, which is true by (\ref{4.10}). Thus (B--2) holds for $G$.
    \item Consider a Cartesian diagram
    \[\begin{tikzcd}
      \widetilde{X_4}\arrow[d,"\widetilde{f_3}"]\arrow[r,"\widetilde{f_2}"]&\widetilde{X_2}\arrow[d,"\widetilde{f_1}"]\\
      \widetilde{X_3}\arrow[r,"\widetilde{f_2}"]&\widetilde{X_1}
    \end{tikzcd}\]
    in $\widetilde{C}$ given by a commutative diagram
    \[\begin{tikzcd}
      (Z_4\arrow[dd,"v_3"]\arrow[rr,"i_4"]\arrow[rd,"v_4"]&&S_4)\arrow[dd,"u_3",near start]\arrow[rd,"u_4"]\\
      &(Z_2\arrow[rr,"i_2",crossing over,near start]&&S_2)\arrow[dd,"u_1"]\\
      (Z_3\arrow[rr,"i_3",near start]\arrow[rd,"v_2"]&&S_3)\arrow[rd,"u_2"]\\
      &(Z_1\arrow[rr,"i_1"]\arrow[uu,leftarrow,crossing over,"v_1",near end]&&S_1)
    \end{tikzcd}\]
    in $\mathcal{C}$. Assume that $\widetilde{f_1}$ is a $\widetilde{\mathscr{P}}$-morphism. Consider the natural transformation
    \[\widetilde{f_1}^*\widetilde{f_2}_*\stackrel{Ex}\longrightarrow \widetilde{f_4}_*\widetilde{f_3}^*\]
    that is the left adjoint of the exchange transformation in (B--3). We will show that this is an isomorphism. By (\ref{4.7}(3)), $\rho_{i_2*}$ is fully faithful, so it suffices to show that the natural transformation
    \[\rho_{i_2*}\widetilde{f_1}^*\widetilde{f_2}_*\stackrel{Ex}\longrightarrow \rho_{i_2*}\widetilde{f_4}_*\widetilde{f_3}^*\]
    is an isomorphism.

    From the commutative diagram
    \[\begin{tikzcd}
      \widetilde{X_4}\arrow[dd,"\widetilde{f_3}"]\arrow[rr,"\rho_{i_4}"]\arrow[rd,"\widetilde{f_4}"]&&S_4\arrow[dd,"u_3",near start]\arrow[rd,"u_4"]\\
      &\widetilde{X_2}\arrow[rr,"\rho_{i_2}",crossing over,near start]&&S_2\arrow[dd,"u_1"]\\
      \widetilde{X_3}\arrow[rr,"\rho_{i_3}",near start]\arrow[rd,"\widetilde{f_2}"]&&S_3\arrow[rd,"u_2"]\\
      &\widetilde{X_1}\arrow[rr,"\rho_{i_1}"]\arrow[uu,leftarrow,crossing over,"\widetilde{f_1}",near end]&&S_1
    \end{tikzcd}\]
    in $\widetilde{\mathcal{C}}$, we have the commutative diagram
    \[\begin{tikzcd}
      u_1^*u_{2*}\rho_{i_3*}\arrow[r,"\sim"]\arrow[d,"Ex"]&u_1^*\rho_{i_1*}\widetilde{f_2}_*\arrow[r,"Ex"]&\rho_{i_2*}\widetilde{f_1}^*\widetilde{f_2}_*\arrow[d,"Ex"]\\
      u_{4*}u_3^*\rho_{i_3*}\arrow[r,"Ex"]&u_{4*}\rho_{i_4*}\widetilde{f_3}^*\arrow[r,"\sim"]&\rho_{i_2*}\widetilde{f_4}_*\widetilde{f_3}^*
    \end{tikzcd}\]
    of functors. The left vertical arrow is an isomorphism by (B--3) for $V$. Hence to show that the right vertical arrow is an isomorphism, it suffices to show that the upper right horizontal and lower left horizontal arrows are isomorphisms. This follows from (\ref{4.10}). Thus (B--3) holds for $G$.
    \item Let $\widetilde{f_1}:\widetilde{X_2}\rightarrow \widetilde{X_1}$ be a $\widetilde{\mathscr{P}}$-morphism in $\widetilde{C}$ given by a Cartesian diagram
    \[\begin{tikzcd}
        (Z_2\arrow[r,"i_2"]\arrow[d,"v_1"]&S_2\arrow[d,"u_1"])\\
        (Z_1\arrow[r,"i_1"]&S_1)
      \end{tikzcd}\]
    in $\mathcal{C}$. For any object $K$ of $G(\widetilde{X_2})$ and $L$ of $G(\widetilde{X_1})$, we will show that the exchange transformation
    \[\widetilde{f_1}_\sharp(K\otimes_{\widetilde{X_2}}\widetilde{f_1}^*L)\stackrel{Ex}\longrightarrow \widetilde{f_1}_\sharp K\otimes_{\widetilde{X_1}}L\]
    is an isomorphism. Since $\rho_{i_1}^*$ and $\rho_{i_2}^*$ are essentially surjective by (\ref{4.5}(3)), we can put
    \[K=\rho_{i_2}^*K',\quad L=\rho_{i_1}^*L'.\]
    Then consider the commutative diagram
    \[\begin{tikzcd}
      \widetilde{f_1}_\sharp(\rho_{i_2}^*K'\otimes_{\widetilde{X_2}}\widetilde{f_1}^*\rho_{i_1}^*L')\arrow[d,"\sim"]\arrow[r,"Ex"]&\widetilde{f_1}_\sharp\rho_{i_2}^*K' \otimes_{\widetilde{X_1}}\rho_{i_1}^*L'\arrow[d,"Ex"]\\
      \widetilde{f_1}_\sharp(\rho_{i_2}^*K'\otimes_{\widetilde{X_2}}\rho_{i_2}^*u_1^*L')\arrow[d,"\sim"]&\rho_{i_1}^*u_{1\sharp}K'\otimes_{\widetilde{X_1}}\rho_{i_1}^*L'\arrow[dd,"\sim"]\\
      \widetilde{f_1}_\sharp \rho_{i_2}^*(K'\otimes_{S_2}u_1^*L')\arrow[d,"Ex"]\\
      \rho_{i_1}^*u_{1\sharp}(K'\otimes_{S_2}u_1^*L')\arrow[r,"Ex"]&\rho_{i_1}^*(u_{1\sharp}K'\otimes_{S_1}L')
    \end{tikzcd}\]
    in $G(\widetilde{X_1})$. Here, the middle left vertical and the lower right vertical arrows are obtained by the fact that $\rho_{i_1}^*$ and $\rho_{i_2}^*$ are monoidal, which is proved in (\ref{4.6}). Since the bottom left vertical arrow and the upper right vertical arrows are isomorphisms by (B--3), to show that the upper horizontal arrow is an isomorphism, it suffices to show that the lower horizontal arrow is an isomorphism. This follows from (B--4) for $G$.
  \end{enumerate}
  Thus $G$ is a $\widetilde{\mathscr{P}}$-premotivic pseudofunctor.
\end{none}
\begin{none}\label{4.14}
  Let us continue the argument in (\ref{4.9}). We will show that $G$ satisfies (Loc). Recall from (\ref{2.5}) that a closed immersion in $\widetilde{\mathcal{C}}$ is of the form $\rho_{i_1}$ for some closed immersion $i_1:Z_1\rightarrow S_1$ in $\mathcal{C}$. The complement of $\rho_{i_1}$ in $\widetilde{\mathcal{C}}$ is $j_1$ where $j_1:U_1\rightarrow S_1$ denotes the complement of $i_1$ in $\mathcal{C}$.

  If $a:K\rightarrow K'$ is a morphism in $G(S_1)$ such that $\rho_{i_1}^*a$ and $j_1^*a$ are isomorphisms, then consider the commutative diagram
  \[\begin{tikzcd}
    j_{1\sharp}j_1^*K\arrow[d,"j_{1\sharp}j_1^*a"]\arrow[r,"ad'"]&K\arrow[r,"ad'"]\arrow[d,"a"]&\rho_{i_1*}\rho_{i_1}^*K\arrow[r]\arrow[d,"\rho_{i_1*}\rho_{i_1}^*a"] &j_{1\sharp}j_1^*K[1]\arrow[d,"j_{1\sharp}j_1^*a"]\\
    j_{1\sharp}j_1^*K'\arrow[r,"ad'"]&K'\arrow[r,"ad'"]&\rho_{i_1*}\rho_{i_1}^*K'\arrow[r]&j_{1\sharp}j_1^*K'[1]
  \end{tikzcd}\]
  in $G(S_1)$ where the two rows are distinguished triangles obtained by (\ref{4.5}(2)). By assumption, the first and third vertical arrows are isomorphisms, so the second vertical arrow is an isomorphism. Thus the pair of functors $(\rho_{i_1}^*,j_1^*)$ is conservative. By (\ref{4.5}(3)), the counit $\rho_{i_1}^*\rho_{i_1*}\stackrel{ad}\longrightarrow {\rm id}$ is an isomorphism. Thus $G$ satisfies $({\rm Loc}_{\rho_{i_1}})$.

  Consider the Cartesian diagram
  \[\begin{tikzcd}
    (\emptyset\rightarrow U_1)\arrow[d]\arrow[r]&U_1\arrow[d,"j_1"]\\
    (Z_1\stackrel{i_1}\rightarrow S_1)\arrow[r,"\rho_{i_1}"]&S_1
  \end{tikzcd}\]
  in $\widetilde{\mathcal{C}}$. By the construction of $G$, we have $G(\emptyset\rightarrow U_1)=0$. Thus we have verified all axioms of (Loc) for $G$, so $G$ satisfies (Loc).
\end{none}
\begin{none}\label{4.15}
  Let us continue the argument in (\ref{4.14}). We will construct a pseudonatural equivalence $\delta:V\rightarrow G\circ U$ in (\ref{4.1.1}). Recall from (\ref{4.3}) that $G(U(S_1))=V(S_1)$, and for any object $S_1$ of $\mathcal{E}$, put $\delta(S_1)$ as the identity functor
  \[V(S_1)\longrightarrow G(U(S_1)).\]
  For any morphism $f_1:S_2\rightarrow S_1$ in $\mathcal{E}$, put $\delta(S_1)$ as the natural transformation
  \[f_1^*\stackrel{\sim}\longrightarrow {\rm id}_{S_2}^*f_1^*{\rm id}_{S_1*}.\]
  We will verify the axioms of pseudonatural transformations for $\delta$ as follows.
  \begin{enumerate}[(1)]
    \item For any object $S_1$ of $\mathcal{E}$, we have to show that the diagram
    \[\begin{tikzcd}
      \delta(S_1)V({\rm id}_{S_1})\arrow[rd,"V_{{\rm id}_{S_1}}"']\arrow[rr,"\delta_{{\rm id}_{S_1}}"]&&G(U(S_1))\delta(S_1)\arrow[ld,"G_{U(S_1)}"]\\
      &\delta(S_1)
    \end{tikzcd}\]
    of functors commutes. It is the diagram
    \[\begin{tikzcd}
      {\rm id}_{S_1}^*\arrow[rdd,"{\rm id}"]\arrow[rr,"\sim"]&&{\rm id}_{S_1}^*{\rm id}_{S_1}^*{\rm id}_{S_1*}\arrow[d,"\sim"]\\
      &&{\rm id}_{S_1}^*{\rm id}_{S_1*}\arrow[ld,"ad'"]\\
      &{\rm id}
    \end{tikzcd}\]
    of functors, which commutes.
    \item For any morphisms $S_3\stackrel{f_2}\rightarrow S_2\stackrel{f_1}\rightarrow S_1$ in $\mathcal{E}$, we have to show that the diagram
    \[\begin{tikzcd}
      \delta(S_3)V(f_2)V(f_1)\arrow[d,"V_{f_1,f_2}"]\arrow[r,"\delta_{f_2}"]&G(U(f_2))\delta(S_2)V(f_1)\arrow[r,"\delta_{f_1}"]&G(U(f_2))G(U(f_1))\delta(S_1)\arrow[d, "G_{U(f_1),U(f_2)}"]\\
      \delta(S_3)V(f_1f_2)\arrow[rr,"\delta_{f_1f_2}"]&&G(U(f_2)U(f_1))\delta(S_1)
    \end{tikzcd}\]
    of functors commutes. It is the diagram
    \[\begin{tikzcd}
      f_2^*f_1^*\arrow[dd,"\sim"]\arrow[r,"\sim"]&{\rm id}_{S_2}^*f_2^*{\rm id}_{S_2*}f_1^*\arrow[r,"\sim"]&{\rm id}_{S_3}^*f_2^*{\rm id}_{S_2*}{\rm id}_{S_2}^*f_1^*{\rm id}_{S_1*} \arrow[d,"ad^{-1}"]\\
      &&{\rm id}_{S_3}^*f_2^*f_1^*{\rm id}_{S_1*}\arrow[d,"\sim"]\\
      {\rm id}_{S_3}^*f_2^*f_1^*{\rm id}_{S_1*}\arrow[rr,"\sim"]&&{\rm id}_{S_3}^*(f_1f_2)^*{\rm id}_{S_1*}
    \end{tikzcd}\]
    of functors, which commutes.
  \end{enumerate}
  We have verified all axioms of pseudonatural transformations for $\delta$, and $\delta(S_1)$ is an equivalence for any object $S_1$ of $\mathcal{C}$, so $\delta$ is a pseudonatural equivalence.

  So far, we have proved the following.
\end{none}
\begin{thm}\label{4.16}
  Under the notations and hypotheses of (\ref{4.1}), there is a $\widetilde{\mathscr{P}}$-premotivic pseudofunctor $G:\mathcal{E}\rightarrow {\rm Tri}^{\otimes}$ satisfying {\rm (Loc)} and a pseudonatural equivalence $\delta:V\rightarrow G\circ U$ such that the diagram
  \[\begin{tikzcd}
    \mathcal{E}\arrow[rd,"V"']\arrow[r,"U"]&\widetilde{\mathcal{C}}\arrow[d,"G"]\arrow[ld,phantom,"{\rotatebox[origin=c]{0}{$\Leftrightarrow$}}_{\delta}",very near start]\\
    \,&{\rm Tri}^\otimes
  \end{tikzcd}\]
  of $2$-categories commutes.
\end{thm}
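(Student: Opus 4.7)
The plan is to construct $G$ object-by-object, then on morphisms, then verify the pseudofunctor axioms, the four premotivic axioms, (Loc), and finally build $\delta$. For objects $(Z_1 \xrightarrow{i_1} S_1)$ of $\widetilde{\mathcal{C}}$, define $G(Z_1 \to S_1)$ to be the full subcategory of $V(S_1)$ consisting of objects $K$ with $j_1^* K = 0$, where $j_1$ is the complement of $i_1$. Since $j_1^*$ is triangulated and monoidal, this full subcategory inherits a triangulated symmetric monoidal structure from $V(S_1)$. The first technical step is to establish the basic adjunction: show that the inclusion $\rho_{i_1*} : G(Z_1 \to S_1) \hookrightarrow V(S_1)$ admits both a left adjoint $\rho_{i_1}^*$ and a right adjoint $\rho_{i_1}^!$, fitting into a distinguished triangle $j_{1\sharp}j_1^* \to \mathrm{id} \to \rho_{i_1*}\rho_{i_1}^* \to [1]$. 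The idea is to \emph{choose} a cone of $j_{1\sharp}j_1^* K \xrightarrow{ad'} K$; by $(\mathrm{Loc})$ for $V$ the identity $j_1^*\rho_{i_1*} = 0$ shows this cone lies in $G(Z_1\to S_1)$, and functoriality follows from the uniqueness-of-lift in triangulated categories, which holds because $\mathrm{Hom}(j_{1\sharp}j_1^*K[1], \rho_{i_1*}\rho_{i_1}^*L) = 0$ by adjunction and $j_1^*\rho_{i_1*}=0$.

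Next, on morphisms $\widetilde{f_1} : (Z_2 \to S_2) \to (Z_1 \to S_1)$ presented by $(u_1,v_1)$, set $G(\widetilde{f_1}) = \rho_{i_2}^* u_1^* \rho_{i_1*}$. The key local lemma, obtained from the triangle together with $\rho_{i_2}^* u_1^* j_{1\sharp} \cong \rho_{i_2}^* j_{2\sharp} j_{2\sharp}' w_1'^* = 0$ (via (B--3) for $V$ and (D--4)), is that $\rho_{i_2}^* u_1^* \xrightarrow{ad} \rho_{i_2}^* u_1^* \rho_{i_1*} \rho_{i_1}^*$ is an isomorphism; dually, for Cartesian squares one has $u_1^* \rho_{i_1*} \xrightarrow{ad} \rho_{i_2*} \rho_{i_2}^* u_1^* \rho_{i_1*}$ an isomorphism. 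These two cancellation isomorphisms are what makes all the upcoming diagram chases work. Using the uniqueness-of-lift principle again, transport the monoidal structure of $u_1^*$ through $\rho_{i_2*}$ to get a monoidal structure on $\rho_{i_2}^*$ and hence on $G(\widetilde{f_1})$. Define $G_{\widetilde{f_1}, \widetilde{f_2}}$ by the composition $\rho_{i_3}^* u_2^* \rho_{i_2*} \rho_{i_2}^* u_1^* \rho_{i_1*} \xrightarrow{ad^{-1}} \rho_{i_3}^* u_2^* u_1^* \rho_{i_1*} \xrightarrow{\sim} \rho_{i_3}^* (u_1 u_2)^* \rho_{i_1*}$, which makes sense by the cancellation lemma. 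The pseudofunctor axioms then reduce to diagram chases inside $V$ where every tile either commutes because $V$ is a pseudofunctor or because of the cancellation lemma.

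For the premotivic axioms, (B--1) follows by taking $\rho_{i_1}^! u_{1*} \rho_{i_2*}$ as right adjoint. For (B--2), when $\widetilde{f_1}$ is Cartesian-$\widetilde{\mathscr{P}}$, observe via the cancellation lemma (Cartesian version) that $\widetilde{f_1}^*$ is isomorphic to $\rho_{i_2}^! u_1^* \rho_{i_1*}$, so it admits the left adjoint $\rho_{i_1}^* u_{1\sharp} \rho_{i_2*}$. For (B--3) and (B--4), reduce using full faithfulness of $\rho_{i_2*}$ and essential surjectivity of $\rho_{i_1}^*$ to the corresponding exchange isomorphisms for $V$, modulo the cancellation lemma and compatibility of $\rho^*$ with $\otimes$. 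For $(\mathrm{Loc})$: the complement of $\rho_{i_1}$ in $\widetilde{\mathcal{C}}$ is the natural $j_1$, conservativity of $(\rho_{i_1}^*, j_1^*)$ follows from the basic distinguished triangle by the five-lemma, fully-faithfulness of $\rho_{i_1*}$ is immediate from construction, and the vanishing $G(\emptyset \to U_1) = 0$ is tautological.

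Finally, define $\delta(S_1) : V(S_1) \to G(U(S_1))$ as the identity functor (since $G(U(S_1)) = V(S_1)$ by construction), and $\delta_{f_1}$ as the tautological isomorphism $f_1^* \xrightarrow{\sim} \mathrm{id}_{S_2}^* f_1^* \mathrm{id}_{S_1*}$; the pseudonatural transformation axioms follow by unwinding definitions. The main obstacle is not any single step but rather the sheer volume of coherence verification for the pseudofunctor structure on $G$, particularly the associator pentagon for $G_{\widetilde{f_1},\widetilde{f_2}}$ across four composable morphisms; every such pentagon decomposes into small squares that commute either because $V$ is a pseudofunctor or because of the cancellation lemma, so what makes the proof go through is really the two cancellation isomorphisms combined with the uniqueness principle for morphisms out of $\rho_{i_1*}\rho_{i_1}^* K$.
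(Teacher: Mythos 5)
Your proposal follows the paper's own proof step by step: define $G$ on objects as the full subcategory with $j_1^*K=0$, establish the left and right adjoints to the inclusion via a cone of $j_{1\sharp}j_1^*K\to K$ and a uniqueness argument using ${\rm Hom}(j_{1\sharp}j_1^*K[1],\rho_{i_1*}\rho_{i_1}^*L)=0$, prove the two cancellation isomorphisms (the paper's (4.8)), then define $G(\widetilde f_1)=\rho_{i_2}^*u_1^*\rho_{i_1*}$ and verify the pseudofunctor axioms, (B--1)--(B--4), (Loc), and $\delta$ exactly as the paper does in (4.5)--(4.15). This is a correct reconstruction taking essentially the same route.
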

\begin{none}
  Now, we will prove that the above construction is functorial in the following sense.
\end{none}
\begin{thm}\label{4.13}
  Under the notations and hypotheses of {\rm (\ref{4.1})}, consider a diagram
  \[\begin{tikzcd}
    \mathcal{E}\arrow[ddrr,"V'"]\arrow[rr,"U"]\arrow[ddrr,bend right,"V"']&&\widetilde{\mathcal{C}}\arrow[dd,"G"']\arrow[dd,bend left,"G'"]\\
    \\
    &&{\rm Tri}^\otimes
  \end{tikzcd}\]
  of $2$-categories where $V$ and $V'$ are $\mathscr{P}'$-premotivic pseudofunctors satisfying {\rm (Loc)} and $G$ and $G'$ are $\widetilde{\mathscr{P}}$-premotivic pseudofunctors satisfying {\rm (Loc)}. Consider also a diagram
  \[\begin{tikzcd}
    V\arrow[r,"\delta"]\arrow[d,"\epsilon"]&G\circ U\\
    V'\arrow[r,"\delta'"]&G'\circ U
  \end{tikzcd}\]
  where $\epsilon$ is a $\mathscr{P}'$-premotivic pseudonatural transformations and $\delta$ and $\delta'$ are pseudonatural equivalences. Then there is a $\widetilde{\mathscr{P}}$-premotivic pseudonatural transformation
  \[\beta:G\rightarrow G'\]
  unique up to isomorphisms such that the induced $\mathscr{P}'$-premotivic pseudonatural transformation $G\circ U\rightarrow G'\circ U$ makes the above diagram commutative.
\end{thm}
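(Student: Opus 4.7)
The plan is to build $\beta$ by restricting $\epsilon$ along the fully faithful inclusions $\rho_{i_1*}:G(\widetilde{X_1})\hookrightarrow V(S_1)$ and $\rho_{i_1*}':G'(\widetilde{X_1})\hookrightarrow V'(S_1)$ provided by (\ref{4.5}). Mimicking the reduction in the proof of (\ref{1.8}), I will assume without loss of generality that $\delta$ and $\delta'$ are identity pseudonatural transformations, so that $G(U(S_1))=V(S_1)$ and $G'(U(S_1))=V'(S_1)$. For each object $\widetilde{X_1}=(Z_1\stackrel{i_1}\to S_1)$ with complementary open immersion $j_1$, the invertible $2$-morphism $\epsilon_{j_1}:\epsilon(U_1)j_1^*\to j_1^*\epsilon(S_1)$ coming from the pseudonatural structure on $\epsilon$ forces $j_1^*\epsilon(S_1)(K)=0$ whenever $j_1^*K=0$. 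Hence $\epsilon(S_1)$ restricts to a functor $\beta(\widetilde{X_1}):G(\widetilde{X_1})\to G'(\widetilde{X_1})$, characterized by $\rho_{i_1*}'\beta(\widetilde{X_1})=\epsilon(S_1)\rho_{i_1*}$ and equivalently given by $\rho_{i_1}^{\prime *}\epsilon(S_1)\rho_{i_1*}$.

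For a morphism $\widetilde{f_1}:\widetilde{X_2}\to \widetilde{X_1}$ arising from a square with vertical $u_1:S_2\to S_1$, I define the structure $2$-morphism $\beta_{\widetilde{f_1}}:\beta(\widetilde{X_2})\widetilde{f_1}^*\to \widetilde{f_1}^{\prime *}\beta(\widetilde{X_1})$ as the composite
\[\rho_{i_2}^{\prime *}\epsilon(S_2)\rho_{i_2*}\rho_{i_2}^*u_1^*\rho_{i_1*}\longrightarrow \rho_{i_2}^{\prime *}\epsilon(S_2)u_1^*\rho_{i_1*}\stackrel{\epsilon_{u_1}}\longrightarrow \rho_{i_2}^{\prime *}u_1^*\epsilon(S_1)\rho_{i_1*},\]
where the first arrow is the isomorphism obtained from the localization triangle $j_{2\sharp}j_2^*\to {\rm id}\to \rho_{i_2*}\rho_{i_2}^*$ of (\ref{4.5}(2)), using the vanishing $\rho_{i_2}^{\prime *}\epsilon(S_2)j_{2\sharp}\cong \rho_{i_2}^{\prime *}j_{2\sharp}\epsilon(U_2)=0$ from (\ref{4.5}(4)). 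The two pseudonatural transformation axioms for $\beta$ should follow, by applying the fully faithful $\rho_{i_1*}'$ to reduce everything to identities in $V'(S_1)$, from the pseudonaturality of $\epsilon$ together with the pseudofunctor axioms for $G$ and $G'$; the diagram chases will parallel those in (\ref{1.4}) and (\ref{4.12}).

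Next I would check that $\beta$ is $\widetilde{\mathscr{P}}$-premotivic. For (C--1), composing a right adjoint $R(S_1)$ of $\epsilon(S_1)$ (existing by (C--1) for $\epsilon$) with the adjoints of $\rho_{i_1*}$ and $\rho_{i_1*}'$ from (\ref{4.5}(1)) yields a right adjoint $\rho_{i_1}^!R(S_1)\rho_{i_1*}'$ to $\beta(\widetilde{X_1})$. For (C--2), applied to a $\widetilde{\mathscr{P}}$-morphism $\widetilde{f_1}$ (a Cartesian square with $u_1\in \mathscr{P}$), the description $\widetilde{f_{1\sharp}}=\rho_{i_1}^*u_{1\sharp}\rho_{i_2*}$ from (\ref{4.9}(2)) and its counterpart for $G'$ reduce the required exchange to the (C--2) exchange for $\epsilon$ along $u_1$, modulo a chase through (\ref{4.8}) and the characterization $\rho_{i_1*}'\beta=\epsilon\rho_{i_1*}$. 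Commutativity of the specified outer diagram of pseudofunctors is then tautological from this characterization.

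For uniqueness, suppose $\beta'':G\to G'$ is another $\widetilde{\mathscr{P}}$-premotivic pseudonatural transformation satisfying $\alpha'\beta\cong \beta''\alpha$, where $\alpha,\alpha'$ are the given pseudonatural equivalences (so that $\beta''(S_1)\cong \epsilon(S_1)\cong \beta(S_1)$ on objects in the image of $U$). For a general $\widetilde{X_1}=(Z_1\to S_1)$, the closed immersion $\rho_{i_1}:\widetilde{X_1}\to S_1$ in $\widetilde{\mathcal{C}}$ together with the pseudonaturality of $\beta''$ and $\beta$ yields $\beta''(\widetilde{X_1})\rho_{i_1}^*\cong \rho_{i_1}^{\prime *}\beta''(S_1)\cong \rho_{i_1}^{\prime *}\beta(S_1)\cong \beta(\widetilde{X_1})\rho_{i_1}^*$, and the essential surjectivity of $\rho_{i_1}^*$ from (\ref{4.5}(3)) upgrades this to a canonical invertible $2$-morphism $\beta''(\widetilde{X_1})\cong \beta(\widetilde{X_1})$, which will assemble into an invertible modification by the argument at the end of the proof of (\ref{1.8}). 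The main technical obstacle should be the composition axiom for $\beta_{\widetilde{f_1}}$, because it simultaneously juggles the pseudonaturality of $\epsilon$ across $u_1u_2$, the pseudofunctoriality of $G$ and $G'$, and the vanishing/identity statements for the reflections $\rho_{i*}\rho_i^*$; however, every small square in the resulting large pasting diagram should reduce mechanically to one of these three inputs.
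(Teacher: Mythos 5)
Your proposal is correct and follows essentially the same route as the paper's proof: define $\beta(\widetilde{X_1})$ via $\rho_{i_1}^{\prime *}\epsilon(S_1)\rho_{i_1*}$ (equivalently your strict restriction), define $\beta_{\widetilde{f_1}}$ by the same three-step composite, check pseudonaturality and (C--1)/(C--2) by diagram chases parallel to (\ref{1.4}), (\ref{4.12}), (\ref{4.9}), and establish uniqueness by transporting a modification along the closed immersion $\rho_{i_1}:\widetilde{X_1}\to S_1$ using the counit isomorphism $\rho_{i_1}^*\rho_{i_1*}\stackrel{\sim}\to{\rm id}$ from (\ref{4.5}(3)). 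Two small remarks: your justification of the first arrow of $\beta_{\widetilde{f_1}}$ via the localization triangle of (\ref{4.5}(2)) and the vanishing $\rho_{i_2}^{\prime *}j_{2\sharp}'=0$ of (\ref{4.5}(4)) directly handles an arbitrary commutative (not merely Cartesian) square, which is cleaner than invoking (\ref{2.8}); and in the uniqueness step, the phrase ``essential surjectivity of $\rho_{i_1}^*$'' is not by itself enough---what you actually use is the isomorphism $\rho_{i_1}^*\rho_{i_1*}\cong{\rm id}$ (which is what (\ref{4.5}(3)) supplies), so the citation is right even if the wording is a bit loose.
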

\begin{proof}
  We may assume that $\delta$ and $\delta'$ are the identities. We will first construct $\beta:G\rightarrow G'$. For any object $\widetilde{X_1}=(Z_1\stackrel{i_1}\rightarrow S_1)$ of $\widetilde{\mathcal{C}}$, put $\beta(\widetilde{X_1})$ as the composition
  \[G(\widetilde{X_1})\stackrel{\rho_{i_1*}}\longrightarrow V(S_1)\stackrel{\epsilon(S_1)}\longrightarrow V'(S_1)\stackrel{\rho_{i_1}^*}\longrightarrow G'(\widetilde{X_1})\]
  of functors. For any morphism $\widetilde{f_1}:\widetilde{X_2}\rightarrow \widetilde{X_1}$ in $\mathcal{C}$ given by a commutative diagram
  \[\begin{tikzcd}
        (Z_2\arrow[r,"i_2"]\arrow[d,"v_1"]&S_2\arrow[d,"u_1"])\\
        (Z_1\arrow[r,"i_1"]&S_1)
  \end{tikzcd}\]
  in $\mathcal{C}$, put $\beta(\widetilde{f_1})$ as the natural transformation
  \[\beta(\widetilde{X_2})G(\widetilde{f_1})\longrightarrow G'(\widetilde{f_1})\beta(\widetilde{X_1})\]
  given by the composition
  \[\rho_{i_2}^*\epsilon(S_2)\rho_{i_2*}\widetilde{f_1}^*\stackrel{Ex^{-1}}\longrightarrow \rho_{i_2}^*\epsilon(S_2)u_1^*\rho_{i_1*}\stackrel{\epsilon_{u_1}}\longrightarrow \rho_{i_2}^*u_1^*\epsilon(S_1)\rho_{i_1*}\stackrel{\sim}\longrightarrow \widetilde{f_1}^*\rho_{i_1}^*\epsilon(S_1)\rho_{i_1*}.\]
  Here, the first arrow is defined and an isomorphism by (\ref{2.8}).

  Now, we will verify the axioms of pseudonatural transformations for $\beta$.
  \begin{enumerate}[(1)]
    \item For any object $\widetilde{X_1}=(Z_1\stackrel{i_1}\rightarrow S_1)$ in $\widetilde{\mathcal{C}}$, we have to show that the diagram
    \[\begin{tikzcd}
      \beta(\widetilde{X_1})G({\rm id}_{\widetilde{X_1}})\arrow[rr,"\beta({\rm id}_{\widetilde{X_1}})"]\arrow[rd,"G_{{\rm id}_{\widetilde{X_1}}}"']&&G'({\rm id}_{\widetilde{X_1}})\beta(\widetilde{X_1})\arrow[ld,"G_{{\rm id}_{\widetilde{X_1}}}'"]\\
      &\beta(\widetilde{X_1})
    \end{tikzcd}\]
    of functors commutes. It is true since it is the big outside diagram of the diagram
    \[\begin{tikzcd}
      \rho_{i_1}^*\epsilon(S_1)\rho_{i_1*}{\rm id}_{\widetilde{X_1}}^*\arrow[r,"Ex^{-1}"]\arrow[rrd,"G_{{\rm id}_{\widetilde{X_1}}}"']&\rho_{i_1}^*\epsilon(S_1){\rm id}_{S_1}^*\rho_{i_1*}\arrow[rd,"V_{{\rm id}_{S_1}}"]\arrow[r,"\epsilon_{{\rm id}_{S_1}}"]&\rho_{i_1}^*{\rm id}_{S_1}^*\epsilon(S_1)\rho_{i_1*}\arrow[d,"V_{{\rm id}_{S_1}}'"] \arrow[r,"\sim"]& {\rm id}_{\widetilde{X_1}}^*\rho_{i_1}^*\epsilon(S_1)\rho_{i_1*}\arrow[ld,"G_{{\rm id}_{\widetilde{X_1}}}'"]\\
      &&\rho_{i_1}^*\epsilon(S_1)\rho_{i_1*}
    \end{tikzcd}\]
    of functors, whose small diagrams commute.
    \item For any morphisms $\widetilde{X_3}\stackrel{\widetilde{f_2}}\longrightarrow \widetilde{X_2}\stackrel{\widetilde{f_1}}\longrightarrow \widetilde{X_1}$ in $\widetilde{\mathcal{C}}$ given by a commutative diagram
    \[\begin{tikzcd}
      (Z_3\arrow[d,"v_2"]\arrow[r,"i_3"]&S_3\arrow[d,"u_2"])\\
      (Z_2\arrow[r,"i_2"]\arrow[d,"v_1"]&S_2\arrow[d,"u_1"])\\
      (Z_1\arrow[r,"i_1"]&S_1)
    \end{tikzcd}\]
    in $\mathcal{C}$, we have to show that the diagram
    \[\begin{tikzcd}
      \beta(\widetilde{X_3})G(\widetilde{f_2})G(\widetilde{f_1})\arrow[d,"\beta_{\widetilde{f_2}}"]\arrow[r,"G_{\widetilde{f_1},\widetilde{f_2}}"]& \beta(\widetilde{X_3})G(\widetilde{f_1}\widetilde{f_2})\arrow[dd,"\beta_{\widetilde{f_1},\widetilde{f_2}}"]\\
      G'(\widetilde{f_2})\beta(\widetilde{X_2})G(\widetilde{f_1})\arrow[d,"\beta_{\widetilde{f_1}}"]\\
      G'(\widetilde{f_2})G'(\widetilde{f_1})\beta(\widetilde{X_1})\arrow[r,"G_{\widetilde{f_1},\widetilde{f_2}}'"]&G'(\widetilde{f_1}\widetilde{f_2})\beta(\widetilde{X_1})
    \end{tikzcd}\]
    of functors commutes. It is true since it is the big outside diagram of the diagram
    \[\begin{tikzcd}
      \rho_{i_3}^*\epsilon(S_3)\rho_{i_3*}\widetilde{f_2}^*\widetilde{f_1}^*\arrow[rr,"\sim"]\arrow[d,"Ex^{-1}"]&&\rho_{i_3}^*\epsilon(S_3)\rho_{i_3*}(\widetilde{f_1}\widetilde{f_2})^* \arrow[d,"Ex^{-1}"]\\
      \rho_{i_3}^*\epsilon(S_3)u_2^*\rho_{i_2*}\widetilde{f_1}^*\arrow[d,"\epsilon_{u_2}"]\arrow[r,"Ex^{-1}"]&\rho_{i_3}^*\epsilon(S_3)u_2^*u_1^*\rho_{i_1*}\arrow[d,"\epsilon_{u_2}"] \arrow[r,"\sim"]&\rho_{i_3}^*\epsilon(S_3)(u_1u_2)^*\rho_{i_1*}\arrow[dddd,"\epsilon_{u_1u_2}"]\\
      \rho_{i_3}^*u_2^*\epsilon(S_2)\rho_{i_2*}\widetilde{f_1}^*\arrow[d,"\sim"]\arrow[r,"Ex^{-1}"]&\rho_{i_3}^*u_2^*\epsilon(S_2)u_1^*\rho_{i_1*}\arrow[ddd,"\epsilon_{u_1}"]\\
      \widetilde{f_2}^*\rho_{i_2}^*\epsilon(S_2)\rho_{i_2*}\widetilde{f_1}^*\arrow[d,"Ex^{-1}"]\\
      \widetilde{f_2}^*\rho_{i_2}^*\epsilon(S_2)u_1^*\rho_{i_1*}\arrow[d,"\epsilon_{u_1}"]\\
      \widetilde{f_2}^*\rho_{i_2}^*u_1^*\epsilon(S_1)\rho_{i_1*}\arrow[d,"\sim"]\arrow[r,"\sim"]&\rho_{i_3}^*u_2^*u_1^*\epsilon(S_1)\rho_{i_1*}\arrow[r,"\sim"] & \rho_{i_3}^*(u_1u_2)^*\epsilon(S_1)\rho_{i_1*}\arrow[d,"\sim"]\\
      \widetilde{f_2}^*\widetilde{f_1}^*\rho_{i_1}^*\epsilon(S_1)\rho_{i_1*}\arrow[rr,"\sim"]&&(\widetilde{f_1}\widetilde{f_2})^*\rho_{i_1}^*\epsilon(S_1)\rho_{i_1*}
    \end{tikzcd}\]
    of functors, whose small diagrams commute.
  \end{enumerate}
  We have verified all axioms of pseudonatural transformations for $\beta$, so $\beta$ is a pseudonatural transformation. The commutativity of the diagram
  \[\begin{tikzcd}
    V\arrow[d,"\epsilon"]\arrow[r,"\delta"]&G\circ U\arrow[d,"\beta"]\\
    V'\arrow[r,"\delta'"]&G'\circ U
  \end{tikzcd}\]
  of pseudofunctors follows from construction. Now, we will verify the axioms (C--1) and (C--2) for $\beta$. Let $\widetilde{X_1}=(Z_1\stackrel{i_1}\rightarrow S_1)$ be an object of $\widetilde{\mathcal{C}}$. Since the functor $\epsilon(S_1)$ has a right adjoint by (C--1) for $\epsilon$, the functor
  \[\beta(\widetilde{X_1})=\rho_{i_1}^*\epsilon(S_1)\rho_{i_1*}\]
  has a right adjoint. Thus $\beta$ satisfies (C--1). Let $f_1:\widetilde{X_2}\rightarrow \widetilde{X_1}$ be a morphism in $\widetilde{\mathscr{P}}$ given by a Cartesian diagram
  \[\begin{tikzcd}
    Z_2\arrow[d,"v_1"]\arrow[r,"i_2"]&S_2\arrow[d,"u_1"]\\
    Z_1\arrow[r,"i_1"]&S_1
  \end{tikzcd}\]
  in $\mathcal{C}$. Then we have the Cartesian diagram
  \[\begin{tikzcd}
    \widetilde{X_2}\arrow[r,"\rho_{i_2}"]\arrow[d,"f_1"]&S_2\arrow[d,"u_1"]\\
    \widetilde{X_1}\arrow[r,"\rho_{i_1}"]&S_1
  \end{tikzcd}\]
  in $\widetilde{\mathcal{C}}$, and we have the commutative diagram
  \[\begin{tikzpicture}[baseline= (a).base]
    \node[scale=.92] (a) at (0,0)
    {\begin{tikzcd}
    f_{1\sharp}\rho_{i_2}^*\epsilon(S_2)\rho_{i_2*}\arrow[d,"Ex"]\arrow[r,"ad"]&f_{1\sharp}\rho_{i_2}^*\epsilon(S_2)\rho_{i_2*}f_1^*f_{1\sharp}\arrow[d,"Ex"] \arrow[rr,"Ex^{-1}"]&& f_{1\sharp}\rho_{i_2}^*\epsilon(S_2)u_1^*\rho_{i_2*}f_{1\sharp}\arrow[d,"\epsilon_{u_1}"]\arrow[lldd,"Ex"]\\
    \rho_{i_1}^*u_{1\sharp}\epsilon(S_2)\rho_{i_2*}\arrow[d,"ad"]\arrow[r,"ad"]& \rho_{i_1}^*u_{1\sharp}\epsilon(S_2)\rho_{i_2*}f_1^*f_{1\sharp}\arrow[d,"Ex^{-1}"]&&f_{1\sharp}\rho_{i_2}^* u_1^*\epsilon(S_1)\rho_{i_2*}f_{1\sharp}\arrow[d,"\sim"]\arrow[ld,"Ex"]\\
    \rho_{i_1}^*u_{1\sharp}\epsilon(S_2)u_1^*u_{1\sharp}\rho_{i_2*}\arrow[d,"\epsilon_{u_1}"]\arrow[r,"Ex_{\sharp *}"]&\rho_{i_1}^*u_{1\sharp}\epsilon(S_2)u_1^*f_{1\sharp}\rho_{i_1*} \arrow[r,"\epsilon_{u_1}"]&\rho_{i_1}^*u_{1\sharp}u_1^*\epsilon(S_1)f_{1\sharp}\rho_{i_1*}\arrow[rd,"ad'"]&f_{1\sharp}f_1^*\rho_{i_2}^*\epsilon(S_1)\rho_{i_2*}f_{1\sharp} \arrow[d,"ad'"]\\
    \rho_{i_1}^*u_{1\sharp}u_1^*\epsilon(S_1)u_{1\sharp}\rho_{i_2*}\arrow[r,"ad'"]&\rho_{i_1}^*\epsilon(S_1)u_{1\sharp}\rho_{i_2*}\arrow[rr,"Ex_{\sharp *}"]&&\rho_{i_2}^*\epsilon(S_1) \rho_{i_2*}f_{1\sharp}
  \end{tikzcd}};
  \end{tikzpicture}\]
  of functors. Here, the arrows denoted by $Ex_{\sharp*}$ are induced from (\ref{2.16}). The composition of the five arrows in the upper right route is the exchange transformation
  \begin{equation}\label{4.13.1}
    f_{1\sharp}\beta(S_2)\stackrel{Ex}\longrightarrow \beta(S_1)f_{1\sharp}.
  \end{equation}
  To show that this is an isomorphism, it suffices to show that the composition
  \[\begin{split}
    f_{1\sharp}\rho_{i_2}^*\epsilon(S_2)\rho_{i_2*}&\stackrel{Ex}\longrightarrow \rho_{i_1}^*u_{1\sharp}\epsilon(S_2)\rho_{i_2*}\stackrel{ad}\longrightarrow \rho_{i_1}^*u_{1\sharp}\epsilon(S_2)u_1^*u_{1\sharp}\rho_{i_2*}\stackrel{\epsilon_{u_1}}\longrightarrow \rho_{i_1}^*u_{1\sharp}u_1^*\epsilon(S_1)u_{1\sharp}\rho_{i_2*} \\
  &\stackrel{ad'}\longrightarrow \rho_{i_1}^*\epsilon(S_1)u_{1\sharp}\rho_{i_2*} \stackrel{Ex_{\sharp *}}\longrightarrow \rho_{i_2}^*\epsilon(S_1) \rho_{i_2*}f_{1\sharp}
  \end{split}\]
  of the natural transformations in the lower left route is an isomorphism. The composition of the second, third, and fourth arrows is an isomorphism by (C--2) for $\epsilon$, and the first arrow is an isomorphism by (B--3) for $G'$. Since the fifth arrow is an isomorphism by (\ref{2.16}), (\ref{4.13.1}) is an isomorphism. Thus $\beta$ satisfies the axiom (C--2). We have verified the axioms (C--1) and (C--2) for $\beta$, so $\beta$ is a $\widetilde{\mathscr{P}}$-premotivic pseudonatural transformation.

  Now, we will show the uniqueness of $\beta$. If $\beta':G\rightarrow G'$ be another $\widetilde{\mathscr{P}}$-premotivic pseudonatural transformation such that the diagram
  \[\begin{tikzcd}
    V\arrow[r,"\delta"]\arrow[d,"\epsilon"]&G\circ U\arrow[d,"\beta'"]\\
    V'\arrow[r,"\delta'"]&G'\circ U
  \end{tikzcd}\]
  commutes, we have to show that there is an isomorphism $\Phi:\beta\rightarrow \beta'$. We will construct it as follows. For any object $\widetilde{X_1}=(Z_1\stackrel{i_1}\rightarrow S_1)$ of $\widetilde{\mathcal{C}}$, put $\Phi_{\widetilde{X_1}}$ as the composition
  \[\begin{split}
    \beta(\widetilde{X_1})&\stackrel{ad'^{-1}}\longrightarrow \beta(\widetilde{X_1})\rho_{i_1}^*\rho_{i_1*}\stackrel{\beta_{\rho_{i_1}}}\longrightarrow \rho_{i_1}^*\beta(S_1)\rho_{i_1*} \stackrel{\sim}\longrightarrow \rho_{i_1}^*\epsilon(S_1)\rho_{i_1*}\\
    &\stackrel{\sim}\longrightarrow \rho_{i_1}^*\beta'(S_1)\rho_{i_1*}\stackrel{\beta_{\rho_{i_1}}^{-1}}\longrightarrow \beta'(\widetilde{X_1})\rho_{i_1}^*\rho_{i_1*}\stackrel{ad'}\longrightarrow \beta'(\widetilde{X_1}).
  \end{split}\]
  Here, the first arrow is defined and an isomorphism by (Loc), and the sixth arrow is also an isomorphism by (Loc). Thus $\Phi_{\widetilde{X_1}}$ is an isomorphism. The axiom of modifications for $\Phi$ is that for any morphism $\widetilde{f_1}:\widetilde{X_2}\rightarrow \widetilde{X_1}$ in $\widetilde{\mathcal{C}}$ given by a commutative diagram
  \[\begin{tikzcd}
    (Z_2\arrow[r,"i_2"]\arrow[d,"v_1"]&S_2\arrow[d,"u_1"])\\
    (Z_1\arrow[r,"i_1"]&S_1)
  \end{tikzcd}\]
  in $\mathcal{C}$, the diagram
  \[\begin{tikzcd}
    \beta(\widetilde{X_2})\widetilde{f_1}^*\arrow[r,"\beta_{f_1}"]\arrow[d,"\Phi_{\widetilde{X_2}}"]&\widetilde{f_1}^*\beta(\widetilde{X_1})\arrow[d,"\Phi_{\widetilde{X_1}}"]\\
    \beta(\widetilde{X_2})'\widetilde{f_1}^*\arrow[r,"\beta_{f_1}'"]&\widetilde{f_1}^*\beta'(\widetilde{X_1})
  \end{tikzcd}\]
  of functors commutes. This is true since the diagram is the big outside diagram of the diagram
  \[\begin{tikzcd}
    \beta(\widetilde{X_2})\widetilde{f_1}^*\arrow[d,"ad'^{-1}"]\arrow[rrr,"\beta_{\widetilde{f_1}}"]\arrow[rrd,"ad'^{-1}"]&&&\widetilde{f_1}^*\beta(\widetilde{X_1})\arrow[d,"ad'^{-1}"]\\
    \beta(\widetilde{X_2})\rho_{i_2}^*\rho_{i_2*}\widetilde{f_1}^*\arrow[d,"\beta_{\rho_{i_2}}"]\arrow[r,"Ex^{-1}"]&\beta(\widetilde{X_2})\rho_{i_2}^*u_1^*\rho_{i_1*} \arrow[d,"\beta_{\rho_{i_2}}"]\arrow[r,"\sim"]&\beta(\widetilde{X_2})\widetilde{f_1}^*\rho_{i_1}^*\rho_{i_1*}\arrow[r,"\beta_{\widetilde{f_1}}"]&\widetilde{f_1}^* \beta(\widetilde{X_1})\rho_{i_1}^*\rho_{i_1*}\arrow[d,"\beta_{\rho_{i_1}}"]\\
    \rho_{i_2}^*\beta(S_2)\rho_{i_2*}\widetilde{f_1}\arrow[d,"\sim"]\arrow[r,"Ex^{-1}"]&\rho_{i_2}^*\beta(S_2)u_1^*\rho_{i_1*}\arrow[d,"\sim"]\arrow[rr,"\beta_{u_1}"]&& \widetilde{f_1}^*\rho_{i_1}^*\beta(S_1)\rho_{i_1*}\arrow[d,"\sim"]\\
    \rho_{i_2}^*\epsilon(S_2)\rho_{i_2*}\widetilde{f_1}\arrow[d,"\sim"]\arrow[r,"Ex^{-1}"]&\rho_{i_2}^*\epsilon(S_2)u_1^*\rho_{i_1*}\arrow[d,"\sim"]\arrow[rr,"\epsilon_{u_1}"]&& \widetilde{f_1}^*\rho_{i_1}^*\epsilon(S_1)\rho_{i_1*}\arrow[d,"\sim"]\\
    \rho_{i_2}^*\beta'(S_2)\rho_{i_2*}\widetilde{f_1}\arrow[d,"\beta_{\rho_{i_2}}'"]\arrow[r,"Ex^{-1}"]&\rho_{i_2}^*\beta(S_2)u_1^*\rho_{i_1*}\arrow[d,"\beta_{\rho_{i_2}}'"] \arrow[rr,"\beta_{u_1}'"]&& \widetilde{f_1}^*\rho_{i_1}^*\beta(S_1)\rho_{i_1*}\arrow[d,"\beta_{\rho_{i_1}}'"]\\
    \beta'(\widetilde{X_2})\rho_{i_2}^*\rho_{i_2*}\widetilde{f_1}^*\arrow[d,"ad'"]\arrow[r,"Ex^{-1}"]&\beta'(\widetilde{X_2})\rho_{i_2}^*u_1^*\rho_{i_1*}\arrow[r,"\sim"]& \beta'(\widetilde{X_2})\widetilde{f_1}^*\rho_{i_1}^*\rho_{i_1*}\arrow[r,"\beta_{\widetilde{f_1}}'"]\arrow[lld,"ad'"]&\widetilde{f_1}^*\beta'(\widetilde{X_1})\rho_{i_1}^* \rho_{i_1*}\arrow[d,"ad'"]\\
    \beta'(\widetilde{X_2})\widetilde{f_1}^*\arrow[rrr,"\beta_{\widetilde{f_1}}'"]&&&\widetilde{f_1}^*\beta'(\widetilde{X_1})
  \end{tikzcd}\]
  of functors, whose small diagrams commute. Here, the arrows denoted by $ad'^{-1}$ are defined and isomorphisms by (Loc), and the arrows denoted by $Ex^{-1}$ are defined and an isomorphism by (\ref{2.8}). Thus $\beta$ is isomorphic to $\beta'$, so we have proved the uniqueness of $\beta$.
\end{proof}
\section{Proof of (\ref{0.4}), part III}
\begin{none}\label{5.1}
  Under the notations and hypotheses of (\ref{2.5}), assume $\mathcal{C}=\mathscr{S}$ and $\mathcal{E}=\mathscr{S}^{sm}$, and consider a commutative diagram
  \[\begin{tikzcd}
    \mathcal{E}\arrow[rd,"V"']\arrow[r,"U"]&\widetilde{\mathcal{C}}\arrow[d,"G"]\arrow[ld,phantom,"{\rotatebox[origin=c]{0}{$\Leftrightarrow$}}_{\delta}",very near start]\arrow[r,"F"]&\mathcal{C}\\
    \,&{\rm Tri}^\otimes
  \end{tikzcd}\]
  of $2$-categories where
  \begin{enumerate}[(i)]
    \item $V$ is a $\mathscr{P}'$-premotivic pseudofunctor satisfying (Loc).
    \item $G$ is a $\widetilde{\mathscr{P}}$-premotivic pseudofunctor satisfying (Loc),
    \item $\delta$ is a pseudonatural equivalence.
  \end{enumerate}
  In this section, we will show that for any morphism $\widetilde{f_1}:\widetilde{X_2}\rightarrow \widetilde{X_1}$ given by a commutative diagram
  \[\begin{tikzcd}
    (Z_1\arrow[d,"{\rm id}"]\arrow[r,"i_2"]&S_2)\arrow[d,"u_1"]\\
    (Z_1\arrow[r,"i_1"]&S_1)
  \end{tikzcd}\]
  in $\mathcal{C}$, $\widetilde{f_1}^*$ is an equivalence.
\end{none}
\begin{prop}\label{5.2}
  Under the notations and hypotheses of (\ref{5.1}), if $u_1$ is a closed immersion, then $\widetilde{f_1}^*$ is an equivalence.
\end{prop}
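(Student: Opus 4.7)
The plan is to observe that both $G(\widetilde{X_1})$ and $G(\widetilde{X_2})$ are canonically equivalent to $V(Z_1)$ via push-forward along the closed immersions $i_1$ and $i_2$, and to show that $\widetilde{f_1}^*$ corresponds under these equivalences to the identity on $V(Z_1)$.

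First I would verify that $G(\widetilde{X_1}) \subseteq V(S_1)$ coincides with the essential image of $i_{1*} : V(Z_1) \to V(S_1)$. This follows from {\rm (Loc)} for $V$: on one hand $j_1^* i_{1*} = 0$ is a consequence of {\rm (Loc)}; on the other hand any $K \in V(S_1)$ with $j_1^* K = 0$ satisfies $K \cong i_{1*} i_1^* K$ by the localization triangle (\ref{2.7}(2)), since the cone $j_{1\sharp} j_1^* K[1]$ is zero. Hence $i_{1*} : V(Z_1) \to G(\widetilde{X_1})$ is an equivalence (full faithfulness is part of {\rm (Loc)}). The same reasoning gives an equivalence $i_{2*} : V(Z_1) \to G(\widetilde{X_2})$.

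Next, since $u_1$ is a closed immersion, {\rm (Loc)} for $V$ gives that $u_{1*}$ is fully faithful, so the counit $u_1^* u_{1*} \to {\rm id}$ is an isomorphism. Combined with $i_{1*} \cong u_{1*} i_{2*}$---the right adjoint of the pseudofunctor isomorphism $V_{i_2, u_1} : i_2^* u_1^* \to i_1^*$ attached to $i_1 = u_1 i_2$---we obtain $u_1^* i_{1*} \cong u_1^* u_{1*} i_{2*} \cong i_{2*}$. Hence $u_1^*$ sends the essential image of $i_{1*}$ to that of $i_{2*}$; under the two equivalences of the preceding paragraph, the restriction of $u_1^*$ to $G(\widetilde{X_1})$ corresponds to the identity on $V(Z_1)$, so it is itself an equivalence $G(\widetilde{X_1}) \to G(\widetilde{X_2})$.

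Finally I would identify $\widetilde{f_1}^* = \rho_{i_2}^* u_1^* \rho_{i_1*}$ with this restriction of $u_1^*$: the embedding $\rho_{i_1*}$ is tautological, and for $K \in G(\widetilde{X_1})$ the object $u_1^* \rho_{i_1*} K \in V(S_2)$ lies in the essential image of $\rho_{i_2*}$ by the above, so (\ref{4.5}(3)) gives a natural isomorphism $\rho_{i_2*} \widetilde{f_1}^* K \cong u_1^* \rho_{i_1*} K$; since $\rho_{i_2*}$ is fully faithful, this transports the equivalence of the previous paragraph to $\widetilde{f_1}^*$. I do not foresee a genuine obstacle: the proof is essentially the observation that all three triangulated categories in sight are models for $V(Z_1)$, and the claim reduces to $u_1^* u_{1*} \cong {\rm id}$.
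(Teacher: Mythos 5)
Your opening claim already breaks down: you work with $V(Z_1)$, with functors $i_1^*:V(S_1)\to V(Z_1)$ and $i_{1*}:V(Z_1)\to V(S_1)$, and with the localization triangle for $i_1$ inside $V$. But in the setting of (\ref{5.1}), $V$ is a pseudofunctor on $\mathcal{E}=\mathscr{S}^{sm}$ only, while $Z_1$ is an arbitrary quasi-projective $T$-scheme that need not be smooth. So $V(Z_1)$, and hence $i_{1*}$ and $i_1^*$, simply do not exist, and the localization triangle of (\ref{2.7}(2)) applied to $i_1$ is not available either. The whole purpose of the section is to manufacture a stand-in for the nonexistent category $V(Z_1)$ --- namely $G(\widetilde{X_1})\subseteq V(S_1)$ and $G(\widetilde{X_2})\subseteq V(S_2)$ --- and to show the two candidates agree; your argument presupposes the thing being constructed.

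That said, the heuristic you articulate (``the claim reduces to $u_1^*u_{1*}\cong\mathrm{id}$'') is exactly right, and it can be formalized without invoking $V(Z_1)$. The correct packaging is what the paper does: one checks directly that the unit and counit of the adjunction $\widetilde{f_1}^*\dashv\widetilde{f_1}_*=\rho_{i_1}^!u_{1*}\rho_{i_2*}$ are isomorphisms, using only the internally defined $\rho_{i_1*},\rho_{i_1}^*,\rho_{i_1}^!$ and their analogues for $i_2$. For the counit, the relevant computation is $\rho_{i_2}^*u_1^*\rho_{i_1*}\rho_{i_1}^!u_{1*}\rho_{i_2*}\to\rho_{i_2}^*u_1^*u_{1*}\rho_{i_2*}\to\rho_{i_2}^*\rho_{i_2*}\to\mathrm{id}$, where the first arrow is (\ref{4.8}(1)) and the middle arrow is exactly your $u_1^*u_{1*}\cong\mathrm{id}$ from (Loc). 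For the unit, one uses (\ref{2.8}) to show $\rho_{i_1*}\to u_{1*}u_1^*\rho_{i_1*}$ is an isomorphism: morally, this is because any $K$ with $\rho_{i_1*}K\in G(\widetilde{X_1})$ dies on $S_1\setminus Z_1\supseteq S_1\setminus S_2$, hence lies in the essential image of $u_{1*}$ --- the step you would need in place of ``$K\cong i_{1*}i_1^*K$.'' If you rewrite your proof entirely in terms of $\rho_{i_1*}$, $\rho_{i_2*}$, $u_{1*}$, $u_1^*$ and the exchange isomorphism of (\ref{4.10}) (the square here \emph{is} Cartesian since $Z_1\subseteq S_2\subseteq S_1$), you land on essentially the paper's argument.
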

\begin{proof}
  It suffices to show that the counit and unit
  \[\widetilde{f_1}^*\widetilde{f_1}_*\stackrel{ad'}\longrightarrow {\rm id},\quad {\rm id}\stackrel{ad}\longrightarrow \widetilde{f_1}_*\widetilde{f_1}^*\]
  are isomorphisms. The counit is the composition
  \[\rho_{i_2}^*u_1^*\rho_{i_1*}\rho_{i_1}^!u_{1*}\rho_{i_2*}\stackrel{ad'}\longrightarrow \rho_{i_2}^*u_1^*u_{1*}\rho_{i_2*}\stackrel{ad'}\longrightarrow \rho_{i_2}^*\rho_{i_2*} \stackrel{ad'}\longrightarrow {\rm id}.\]
  The first arrow is an isomorphism by (\ref{4.8}(1)), and the second and third arrows are isomorphisms by (Loc). Thus the composition is also an isomorphism.

  The unit ${\rm id}\stackrel{ad}\longrightarrow \widetilde{f_1}_*\widetilde{f_1}^*$ is the composition
  \begin{equation}\label{5.2.1}
    {\rm id}\stackrel{ad}\longrightarrow \rho_{i_1}^!\rho_{i_1*}\stackrel{ad}\longrightarrow \rho_{i_1}^!u_{1*}u_1^*\rho_{i_1*}\stackrel{ad}\longrightarrow \rho_{i_1}^!u_{1*}\rho_{i_2*}\rho_{i_2}^*u_1^*\rho_{i_1*}.
  \end{equation}
  The first arrow is an isomorphism by (Loc), and the third arrow is an isomorphism by (\ref{4.8}(2)). Thus the remaining is to show that the second arrow is an isomorphism.

  Consider the commutative diagram
  \[\begin{tikzcd}
    \rho_{i_1*}\arrow[rd,"\sim"']\arrow[r,"ad"]&u_{1*}u_1^*\rho_{i_1*}\arrow[d,"Ex"]\\
    &u_{1*}\rho_{i_2*}{\rm id}^*
  \end{tikzcd}\]
  of functors. The vertical arrow is an isomorphism by (\ref{2.8}), so the horizontal arrow is an isomorphism. Thus the second arrow of (\ref{5.2.1}) is an isomorphism.
\end{proof}
\begin{prop}\label{5.3}
  Under the notations and hypotheses of (\ref{5.1}), if $u_1$ is an open immersion, then $\widetilde{f_1}^*$ is an equivalence.
\end{prop}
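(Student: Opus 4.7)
My plan is to show that $\widetilde{f_1}_\sharp$ is a quasi-inverse of $\widetilde{f_1}^*$. First, because $u_1$ is an open immersion and the closed subscheme $Z_1$ is already contained in $S_2$, the defining square of $\widetilde{f_1}$ is automatically Cartesian: $Z_1 \times_{S_1} S_2 = Z_1$. Since $u_1$ is smooth, $u_1 \in Sm = \mathscr{P}$, so $\widetilde{f_1} \in \widetilde{\mathscr{P}}$, and by (\ref{4.9}(2)) the left adjoint of $\widetilde{f_1}^*$ is $\widetilde{f_1}_\sharp = \rho_{i_1}^* u_{1\sharp} \rho_{i_2*}$. I will check that both the counit $\widetilde{f_1}_\sharp \widetilde{f_1}^* \to {\rm id}$ and the unit ${\rm id} \to \widetilde{f_1}^* \widetilde{f_1}_\sharp$ of this adjunction are isomorphisms.

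For the counit, I first apply (\ref{4.8}(2)) to collapse $\rho_{i_2*}\rho_{i_2}^* u_1^* \rho_{i_1*} \stackrel{\sim}\longrightarrow u_1^* \rho_{i_1*}$, reducing the claim to $\rho_{i_1}^* u_{1\sharp} u_1^* \rho_{i_1*} \cong {\rm id}$. The crucial geometric input is that $S_2$ together with $U_1 := S_1 \setminus Z_1$ covers $S_1$, because $Z_1 \subset S_2$. Applying the Mayer--Vietoris distinguished triangle (\ref{2.15}) for this cover to $\rho_{i_1*}K$, the two outer terms vanish --- the $j_{1\sharp}j_1^*$ term because $j_1^* \rho_{i_1*} = 0$, and the $j_{12\sharp}j_{12}^*$ term because $j_{12}:U_2 \to S_1$ factors through $j_1$ --- so that the counit of $u_{1\sharp} \dashv u_1^*$ yields an isomorphism $u_{1\sharp} u_1^* \rho_{i_1*} \stackrel{\sim}\longrightarrow \rho_{i_1*}$. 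Composing with the isomorphism $\rho_{i_1}^* \rho_{i_1*} \stackrel{\sim}\longrightarrow {\rm id}$ from (\ref{4.5}(3)) finishes this step.

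For the unit, I apply base change (B--3) for $V$ to the Cartesian square with $w_1 : U_2 \to U_1$ on top, giving $j_1^* u_{1\sharp} \rho_{i_2*} \cong w_{1\sharp} j_2^* \rho_{i_2*} = 0$. By (\ref{4.5}(2)) this forces $u_{1\sharp} \rho_{i_2*} \stackrel{\sim}\longrightarrow \rho_{i_1*} \rho_{i_1}^* u_{1\sharp} \rho_{i_2*}$, so $\widetilde{f_1}^* \widetilde{f_1}_\sharp \cong \rho_{i_2}^* u_1^* u_{1\sharp} \rho_{i_2*}$. A second application of (B--3) to the Cartesian square
\[\begin{tikzcd}
S_2 \arrow[r,"{\rm id}"]\arrow[d,"{\rm id}"] & S_2\arrow[d,"u_1"]\\
S_2 \arrow[r,"u_1"] & S_1
\end{tikzcd}\]
(which is Cartesian because $u_1$ is a monomorphism) yields $u_1^* u_{1\sharp} \cong {\rm id}$, and $\rho_{i_2}^* \rho_{i_2*} \cong {\rm id}$ from (\ref{4.5}(3)) completes the argument. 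The main obstacle I anticipate is pure bookkeeping --- identifying each abstract isomorphism above with the canonical unit or counit of the adjunction $\widetilde{f_1}_\sharp \dashv \widetilde{f_1}^*$, which will require commutative-diagram chases entirely parallel to those in the proof of (\ref{5.2}), with $u_{1\sharp}$ playing the role that $u_{1*}$ played there.
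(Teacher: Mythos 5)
Your proof is correct, but it takes a genuinely different route from the paper's. The paper works with the right adjoint $\widetilde{f_1}_*=\rho_{i_1}^!u_{1*}\rho_{i_2*}$, which exists unconditionally, and checks that the unit and counit of $\widetilde{f_1}^*\dashv\widetilde{f_1}_*$ are isomorphisms. You instead use the left adjoint $\widetilde{f_1}_\sharp=\rho_{i_1}^*u_{1\sharp}\rho_{i_2*}$, which requires your preliminary (and correct) observation that when $u_1$ is an open immersion the defining square is automatically Cartesian, so $\widetilde{f_1}\in\widetilde{Sm}$ and (\ref{4.9}(2)) applies. The two arguments are then essentially dual: both hinge on the geometric fact that $S_2$ and $U_1=S_1\setminus Z_1$ Zariski-cover $S_1$, but you apply the $\sharp$-form of the Mayer--Vietoris triangle from (\ref{2.15}) to the composite $\widetilde{f_1}_\sharp\widetilde{f_1}^*$, while the paper applies the $*$-form to $\widetilde{f_1}_*\widetilde{f_1}^*$; and where the paper invokes (\ref{2.7}(5)) to collapse $u_1^*u_{1*}$, you re-derive the $\sharp$-analogue $u_1^*u_{1\sharp}\cong{\rm id}$ from (B--3) on a degenerate Cartesian square, which is in fact exactly the content of (\ref{2.7}(5)) and could be cited directly. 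The paper's route has the small advantage of not needing to recognize $\widetilde{f_1}$ as a $\widetilde{Sm}$-morphism and of avoiding any $\sharp$-functors; yours avoids $\rho_{i_1}^!$ entirely. One remark on your closing caveat: since $\widetilde{f_1}_\sharp\dashv\widetilde{f_1}^*$ and you have exhibited some isomorphisms $\widetilde{f_1}_\sharp\widetilde{f_1}^*\cong{\rm id}$ and $\widetilde{f_1}^*\widetilde{f_1}_\sharp\cong{\rm id}$, it already follows formally that $\widetilde{f_1}^*$ is an equivalence, so the bookkeeping of matching your isomorphisms to the canonical unit and counit, while good hygiene, is not logically required.
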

\begin{proof}
    It suffices to show that the counit and unit
  \[\widetilde{f_1}^*\widetilde{f_1}_*\stackrel{ad'}\longrightarrow {\rm id},\quad {\rm id}\stackrel{ad}\longrightarrow \widetilde{f_1}_*\widetilde{f_1}^*\]
  are isomorphisms. The counit is the composition
  \[\rho_{i_2}^*u_1^*\rho_{i_1*}\rho_{i_1}^!u_{1*}\rho_{i_2*}\stackrel{ad'}\longrightarrow \rho_{i_2}^*u_1^*u_{1*}\rho_{i_2*}\stackrel{ad'}\longrightarrow \rho_{i_2}^*\rho_{i_2*}\stackrel{ad'}\longrightarrow {\rm id}.\]
  The first arrow is an isomorphism by (\ref{4.8}(1)), and the second arrow is an isomorphism by (\ref{2.7}(5)). The third arrow is also an isomorphism by (Loc). Thus the composition is an isomorphism.

  The unit ${\rm id}\stackrel{ad}\longrightarrow \widetilde{f_1}_*\widetilde{f_1}^*$ is the composition
  \[{\rm id}\stackrel{ad}\longrightarrow \rho_{i_1}^!\rho_{i_1*}\stackrel{ad}\longrightarrow \rho_{i_1}^!u_{1*}u_1^*\rho_{i_1*}\stackrel{ad}\longrightarrow \rho_{i_1}^!u_{1*}\rho_{i_2*}\rho_{i_2}^*u_1^*\rho_{i_1*}.\]
  The first arrow is an isomorphism by (Loc), and the third arrow is an isomorphism by (\ref{4.8}(1)). Thus the remaining is to show that the second arrow is an isomorphism. For this, by definition, it suffices to show that the morphism
  \[K\stackrel{ad}\longrightarrow u_{1*}u_1^*K\]
  in $V(S_1)$ is an isomorphism for any object $K$ of $V(S_1)$ such that $j_1^*K=0$. Here, $j_1$ denotes the complement of $i_1$.

  Consider the Cartesian diagram
  \[\begin{tikzcd}
    S_2\arrow[d,"u_1"]\arrow[r,"j_2",leftarrow]&U_2\arrow[d,"w_1"]\\
    S_1\arrow[r,"j_1",leftarrow]&U_1
  \end{tikzcd}\]
  in $\mathcal{C}$. Then we have the Mayer-Vietoris distinguish triangle
  \[K\longrightarrow u_{1*}u_1^*K\oplus j_{1*}j_1^*K\longrightarrow (j_1w_1)_*(j_1w_1)^*K\longrightarrow K[1]\]
  in $V(S_1)$. Since $j_1^*K=0$, we have $(j_1w_1)^*K=0$. Thus the morphism $K\stackrel{ad}\longrightarrow u_{1*}u_1^*K$ is an isomorphism.
\end{proof}
\begin{prop}\label{5.4}
  Under the notations and hypotheses of {\rm (\ref{5.1})}, the question that $\widetilde{f_1}^*$ is an equivalence is Zariski local on $S_1$.
\end{prop}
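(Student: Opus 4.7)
The plan is to use the Mayer--Vietoris conservativity (\ref{2.15}) for $V$ to check the unit and counit of the adjunction $\widetilde{f_1}^*\dashv \widetilde{f_1}_*$ over each element of a Zariski cover of $S_1$, identifying each restriction with the unit/counit of a base-changed morphism $\widetilde{f_{1,\alpha}}$ (which is an equivalence by assumption).

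Given a finite Zariski cover $\{j_\alpha:V_\alpha\to S_1\}_{\alpha\in A}$ such that each pulled-back $\widetilde{f_{1,\alpha}}^*$ is an equivalence, form $V_{2,\alpha}=S_2\times_{S_1}V_\alpha$ and $W_\alpha=Z_1\times_{S_1}V_\alpha$, giving objects $\widetilde{X_{1,\alpha}}=(W_\alpha\to V_\alpha)$ and $\widetilde{X_{2,\alpha}}=(W_\alpha\to V_{2,\alpha})$ of $\widetilde{\mathcal{C}}$, morphisms $\pi_\alpha:\widetilde{X_{1,\alpha}}\to \widetilde{X_1}$ and $\pi'_\alpha:\widetilde{X_{2,\alpha}}\to \widetilde{X_2}$ lying in $\widetilde{\mathscr{P}}$ (since open immersions are in $\mathscr{P}$), and a Cartesian square
\[\begin{tikzcd}
\widetilde{X_{2,\alpha}}\arrow[d,"\widetilde{f_{1,\alpha}}"']\arrow[r,"\pi'_\alpha"]&\widetilde{X_2}\arrow[d,"\widetilde{f_1}"]\\
\widetilde{X_{1,\alpha}}\arrow[r,"\pi_\alpha"]&\widetilde{X_1}
\end{tikzcd}\]
in $\widetilde{\mathcal{C}}$. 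A routine induction on $|A|$, regrouping subsets of $A$ into a pair of opens, reduces to the case $|A|=2$; write $V_{12}=V_1\cap V_2$ and $j_{12}$ for the inclusion into $S_1$.

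Since $\rho_{i_1*}:G(\widetilde{X_1})\hookrightarrow V(S_1)$ is fully faithful, a morphism in $G(\widetilde{X_1})$ is invertible iff its image in $V(S_1)$ is; as $V$ satisfies {\rm (Loc)} by hypothesis, (\ref{2.15}) makes the family $\{j_1^*,j_2^*,j_{12}^*\}$ on $V(S_1)$ conservative, and the analogous statement holds for $V(S_2)$ with the cover $\{V_{2,\alpha}\to S_2\}_{\alpha\in\{1,2,12\}}$. Thus it suffices to show that $\pi_\alpha^*(ad)$ is an isomorphism for the unit $ad:{\rm id}\to \widetilde{f_1}_*\widetilde{f_1}^*$ and $(\pi'_\alpha)^*(ad')$ is an isomorphism for the counit $ad':\widetilde{f_1}^*\widetilde{f_1}_*\to {\rm id}$, for each $\alpha\in\{1,2,12\}$. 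For the unit, the base change isomorphism
\[\pi_\alpha^*\widetilde{f_1}_*\stackrel{\sim}\longrightarrow \widetilde{f_{1,\alpha}}_*(\pi'_\alpha)^*\]
obtained by passing to right adjoints from the {\rm (B--3)} exchange iso for the Cartesian square above (valid since $\pi_\alpha,\pi'_\alpha\in\widetilde{\mathscr{P}}$), combined with the pseudofunctoriality constraint identifying $(\pi'_\alpha)^*\widetilde{f_1}^*\cong \widetilde{f_{1,\alpha}}^*\pi_\alpha^*$, identifies $\pi_\alpha^*(ad)$ with the whiskering of the unit of $\widetilde{f_{1,\alpha}}^*\dashv \widetilde{f_{1,\alpha}}_*$ by $\pi_\alpha^*$, which is an isomorphism by assumption. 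The counit is handled symmetrically using $(\pi'_\alpha)^*$.

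The principal technical step is the Beck--Chevalley-type diagram chase verifying that $\pi_\alpha^*(ad)$ agrees, under the base change and pseudofunctoriality coherence isomorphisms, with the whiskered unit for $\widetilde{f_{1,\alpha}}^*\dashv \widetilde{f_{1,\alpha}}_*$. This is a formal consequence of the construction of the base change as the mate of the {\rm (B--3)} exchange, but requires careful tracking of the coherence constraints $G_{-,-}$ of the pseudofunctor $G$, in the same spirit as the large diagrams appearing in the proof of (\ref{4.13}). Once this compatibility is in place, the conservativity argument of the previous paragraph completes the proof.
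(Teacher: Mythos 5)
Your proof is correct and follows essentially the same route as the paper: restrict the unit and counit of $\widetilde{f_1}^*\dashv\widetilde{f_1}_*$ along the cover, identify the restrictions with those of the pulled-back equivalences via the base-change isomorphism coming from (B--3), and then use the Mayer--Vietoris consequence (\ref{2.15}) of (Loc) together with full faithfulness of $\rho_{i_1*}$ and $\rho_{i_2*}$ to descend. The paper organizes this by applying $\rho_{i_1*}$ and $\rho_{i_2*}$ first and then running a Mayer--Vietoris induction in $V(S_1)$ and $V(S_2)$ on the pieces $j_\sharp j^*$, while you phrase it as a conservativity statement for $\{\pi_\alpha^*\}$; these are the same argument in different dress.
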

\begin{proof}
  Let $t_1:S_1'\rightarrow S_1$ be a Zariski cover induced by open immersions
  \[j_1:U_1\rightarrow S_1,\ldots,j_r:U_r\rightarrow S_1,\]
  and consider the diagram
  \[\begin{tikzcd}
      \widetilde{X_2}'\arrow[dd,"\widetilde{f_1}'"]\arrow[rr,"\rho_{i_2'}"]\arrow[rd,"\widetilde{g_1}'"]&&S_2'\arrow[dd,"u_1'",near start]\arrow[rd,"t_2"]\\
      &\widetilde{X_2}\arrow[rr,"\rho_{i_2}",crossing over,near start]&&S_2\arrow[dd,"u_1"]\\
      \widetilde{X_1}'\arrow[rr,"\rho_{i_1'}",near start]\arrow[rd,"\widetilde{g_1}"]&&S_1'\arrow[rd,"t_1"]\\
      &\widetilde{X_1}\arrow[rr,"\rho_{i_1}"]\arrow[uu,leftarrow,crossing over,"\widetilde{f_1}",near end]&&S_1
  \end{tikzcd}\]
  in $\widetilde{\mathcal{C}}$ where each square is Cartesian. The statement is that the equivalence of $\widetilde{f_1'}$ implies the equivalence of $\widetilde{f_1}$. Hence assume that $\widetilde{f_1'}$ is an equivalence.

  In the commutative diagrams
  \[\begin{tikzcd}
    \widetilde{g_2}^*\widetilde{f_1}^*\widetilde{f_1}_*\arrow[r,"ad'"]\arrow[d,"\sim"]&\widetilde{g_2}^*\\
    \widetilde{f_1'}^*\widetilde{g_1}^*\widetilde{f_1}_*\arrow[d,"Ex"]\\
    \widetilde{f_1'}^*\widetilde{f_1'}_*\widetilde{g_2}^*\arrow[ruu,"ad'"']
  \end{tikzcd}\quad
  \begin{tikzcd}
    \widetilde{g_1}^*\arrow[r,"ad"]\arrow[rdd,"ad"']&\widetilde{g_1}^*\widetilde{f_1*}\widetilde{f_1}^*\arrow[d,"Ex"]\\
    &\widetilde{f_1'}_*\widetilde{g_2}^*\widetilde{f_1}^*\arrow[d,"\sim"]\\
    &\widetilde{f_1'}_*\widetilde{f_1'}^*\widetilde{g_1}
  \end{tikzcd}\]
  of functors, the diagonal arrows are isomorphisms since $\widetilde{f_1'}$ is an equivalence. The arrows denoted by $Ex$ are also isomorphisms by (B--3). Thus the horizontal arrows are isomorphisms.

  Then the natural transformations
  \[\rho_{i_2'*}\widetilde{g_2}^*\widetilde{f_1}^*\widetilde{f_1}_*\stackrel{ad'}\longrightarrow \rho_{i_2'*}\widetilde{g_2}^*,\quad  \rho_{i_1*}'\widetilde{g_1}^*\stackrel{ad}\longrightarrow \rho_{i_1*}'\widetilde{g_1}^*\widetilde{f_1*}\widetilde{f_1}^*\]
  are isomorphisms, so by (\ref{2.8}), the natural transformations
  \[t_2^*\rho_{i_2*}\widetilde{f_1}^*\widetilde{f_1}_*\stackrel{ad'}\longrightarrow t_2^*\rho_{i_2*},\quad  t_1^*\rho_{i_1*}\stackrel{ad}\longrightarrow t_1^*\rho_{i_1*}\widetilde{f_1*}\widetilde{f_1}^*\]
  are isomorphisms. Then for any open immersion $j:U\rightarrow S_1$ that is an intersection of $j_1,\ldots,j_n$, the natural transformations
  \[j_\sharp j^*\rho_{i_2*}\widetilde{f_1}^*\widetilde{f_1}_*\stackrel{ad'}\longrightarrow j_\sharp j^*\rho_{i_2*},\quad  j_\sharp j^*\rho_{i_1*}\stackrel{ad}\longrightarrow j_\sharp j^*\rho_{i_1*}\widetilde{f_1*}\widetilde{f_1}^*\]
  are isomorphisms. Applying the Mayer-Vietoris distinguished triangle repeatedly, we see that the natural transformations
  \[\rho_{i_2*}\widetilde{f_1}^*\widetilde{f_1}_*\stackrel{ad'}\longrightarrow \rho_{i_2*},\quad  \rho_{i_1*}\stackrel{ad}\longrightarrow \rho_{i_1*}\widetilde{f_1*}\widetilde{f_1}^*\]
  are isomorphisms. Since $\rho_{i_1*}$ and $\rho_{i_2*}$ are fully faithful by (Loc), the counit and unit
  \[\widetilde{f_1}^*\widetilde{f_1}_*\stackrel{ad'}\longrightarrow {\rm id},\quad {\rm id}\stackrel{ad}\longrightarrow\widetilde{f_1*}\widetilde{f_1}^*\]
  are isomorphism. Thus $\widetilde{f_1}$ is an equivalence.
\end{proof}
\begin{prop}\label{5.5}
  Under the notations and hypotheses of (\ref{5.1}), consider morphisms
  \[\begin{tikzcd}
    \widetilde{X_2}\arrow[r,shift left,"\widetilde{f_1}"]\arrow[r,shift right,"\widetilde{f_2}"']&\widetilde{X_1}
  \end{tikzcd}\]
  in $\widetilde{\mathcal{C}}$ given by a diagram
  \[\begin{tikzcd}
    Z_1\arrow[d,shift left,"{\rm id}"]\arrow[d,shift right,"{\rm id}"']\arrow[r,"i_2"]&S_2\arrow[d,shift left,"u_1"]\arrow[d,shift right,"u_2"']\\
    Z_1\arrow[r,"i_1"]&S_1
  \end{tikzcd}\]
  in $\mathcal{C}$. Then there is a natural isomorphism
  \[\widetilde{f_1}^*\cong \widetilde{f_2}^*.\]
\end{prop}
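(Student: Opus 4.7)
The plan is to factor both $\widetilde{f_1}$ and $\widetilde{f_2}$ through a common auxiliary object in $\widetilde{\mathcal{C}}$ and to reduce the comparison to (\ref{5.2}).

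First, I would introduce the object $\widetilde{X_{12}}=(Z_1\stackrel{(i_1,i_2)}{\longrightarrow}S_1\times S_2)$ in $\widetilde{\mathcal{C}}$. Because $\mathcal{C}=\mathscr{S}$ consists of quasi-projective (hence separated) $T$-schemes, (\ref{2.4}) guarantees that $(i_1,i_2)$ is a closed immersion, so $\widetilde{X_{12}}$ is a legitimate object. The projections $\pi_1,\pi_2$ of $S_1\times S_2$ satisfy $\pi_1\circ(i_1,i_2)=i_1$ and $\pi_2\circ(i_1,i_2)=i_2$, hence induce morphisms $\widetilde{\pi_1}:\widetilde{X_{12}}\rightarrow\widetilde{X_1}$ and $\widetilde{\pi_2}:\widetilde{X_{12}}\rightarrow\widetilde{X_2}$ in $\widetilde{\mathcal{C}}$.

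Next, for $k=1,2$, I would consider the graph $\gamma_k=(u_k,\mathrm{id}):S_2\rightarrow S_1\times S_2$. The identity $\gamma_k\circ i_2=(u_ki_2,i_2)=(i_1,i_2)$ produces a morphism $\widetilde{\gamma_k}:\widetilde{X_2}\rightarrow\widetilde{X_{12}}$, and one checks directly that $\widetilde{\pi_1}\widetilde{\gamma_k}=\widetilde{f_k}$ and $\widetilde{\pi_2}\widetilde{\gamma_k}=\mathrm{id}_{\widetilde{X_2}}$. The key technical point is that $\gamma_k$ is itself a closed immersion: the square
\[\begin{tikzcd}
S_2\arrow[r,"\gamma_k"]\arrow[d,"u_k"]&S_1\times S_2\arrow[d,"\mathrm{id}\times u_k"]\\
S_1\arrow[r,"\Delta_{S_1}"]&S_1\times S_1
\end{tikzcd}\]
is Cartesian, realizing $\gamma_k$ as a pullback of the diagonal of $S_1$; since $S_1$ is separated and closed immersions are stable under pullback by (D--3), $\gamma_k$ is a closed immersion. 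Therefore (\ref{5.2}) applies to both $\widetilde{\gamma_1}$ and $\widetilde{\gamma_2}$, yielding that $\widetilde{\gamma_1}^*$ and $\widetilde{\gamma_2}^*$ are equivalences.

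The remaining step is formal. From $\widetilde{\pi_2}\widetilde{\gamma_k}=\mathrm{id}_{\widetilde{X_2}}$ the pseudofunctor structure of $G$ gives $\widetilde{\gamma_k}^*\widetilde{\pi_2}^*\cong\mathrm{id}$, so $\widetilde{\pi_2}^*$ is a common quasi-inverse to both of the equivalences $\widetilde{\gamma_1}^*$ and $\widetilde{\gamma_2}^*$. By uniqueness of quasi-inverses up to isomorphism one gets $\widetilde{\gamma_1}^*\cong\widetilde{\gamma_2}^*$ naturally, and combining with $\widetilde{f_k}^*\cong\widetilde{\gamma_k}^*\widetilde{\pi_1}^*$ (again from the pseudofunctor structure on $G$) gives $\widetilde{f_1}^*\cong\widetilde{f_2}^*$. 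The main obstacle is identifying the correct auxiliary object $\widetilde{X_{12}}$ together with the sections $\widetilde{\gamma_k}$; once this geometric setup is in place, the argument depends only on (\ref{5.2}) and the formalism of quasi-inverses.
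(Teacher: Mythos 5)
Your proof is correct, and the second half is genuinely different from --- and considerably shorter than --- the paper's argument. Both proofs begin with the same graph trick: the paper also factors $u_k$ as $(u_k,\mathrm{id})\colon S_2\rightarrow S_1\times S_2$ followed by the projection, using the separatedness of $S_1$ to make the graph a closed immersion, which puts things into the situation covered by (\ref{5.2}). Where the proofs diverge is in how this is exploited. The paper then passes to triple products $S_1\times S_1\times S_2$ and $S_1\times S_1\times S_1$, introduces graph morphisms $g_1,\dots,g_4$ and transposition-twisted maps $h_1,h_2,h_3$ arranged so that $h_1g_4=h_2g_4$, and builds an explicit eleven-step chain of natural transformations from $\widetilde{f_1}_*$ to $\widetilde{f_2}_*$ out of (Loc) adjunction units/counits and two applications of (\ref{5.2}). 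Your observation is that this is unnecessary: since $\widetilde{\pi_2}\widetilde{\gamma_k}=\mathrm{id}_{\widetilde{X_2}}$ for \emph{both} $k$, the single functor $\widetilde{\pi_2}^*$ is simultaneously a right quasi-inverse to the two equivalences $\widetilde{\gamma_1}^*$ and $\widetilde{\gamma_2}^*$ furnished by (\ref{5.2}), which forces $\widetilde{\gamma_1}^*\cong\widetilde{\gamma_2}^*$ by uniqueness of quasi-inverses, and hence $\widetilde{f_1}^*\cong\widetilde{\gamma_1}^*\widetilde{\pi_1}^*\cong\widetilde{\gamma_2}^*\widetilde{\pi_1}^*\cong\widetilde{f_2}^*$. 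This is a cleaner and more conceptual argument, and since (\ref{5.5}) is only invoked in the proof of (\ref{5.6}) to transfer the property of being an equivalence from $\widetilde{f_2}^*$ to $\widetilde{f_1}^*$, the abstract isomorphism you produce is exactly what is needed; the paper's explicit chain, while more concrete, buys nothing extra here.
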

\begin{proof}
  The morphisms $u_1$ and $u_2$ have the factorizations
  \[\begin{tikzcd}
    S_2\arrow[r,shift left,"{(u_1,{\rm id})}"]\arrow[r,shift right,"{(u_2,{\rm id})}"']&S_1\times S_2\arrow[r]&S_1
  \end{tikzcd}\]
  where the right arrow is the projection. The left arrows are closed immersions since $S_1$ is separated. Hence replacing $S_1$ by $S_1\times S_2$, we may assume that $u_1$ and $u_2$ are closed immersions.

  Then consider the diagram
  \[\begin{tikzcd}
    &\widetilde{X_2}\arrow[r,shift left,"\widetilde{g_1}"]\arrow[r,shift right,"\widetilde{g_2}"']\arrow[d,"\widetilde{g_3}"]&\widetilde{Y_1}\arrow[d,"\widetilde{h_3}"]\arrow[r,"\widetilde{p_1}"]&\widetilde{X_1}\\
    \widetilde{X_2}\arrow[r,"\widetilde{g_4}"]&\widetilde{Y_1}\arrow[r,shift left,"\widetilde{h_1}"]\arrow[r,shift right,"\widetilde{h_2}"']&\widetilde{Y_2}
  \end{tikzcd}\]
  in $\widetilde{\mathcal{C}}$ given by the diagram
  \[\begin{tikzcd}
    &&Z_1\arrow[dd,"{\rm id}"]\arrow[rr,"{\rm id}"]\arrow[rd,"i_2"]&&Z_1\arrow[rd,"i_3"]\arrow[dd,"{\rm id}",near start]\arrow[rr,"{\rm id}"]&&Z_1\arrow[rd,"i_1"]\\
    &&&S_2\arrow[rr,shift left,"g_1",crossing over]\arrow[rr,shift right,"g_2"',crossing over]&&S_1\times S_1\times S_2\arrow[dd,"h_3",near start]\arrow[rr,"p_1"]&&S_1\\
    Z_1\arrow[rr,"{\rm id}"]\arrow[rd,"i_2"]&&Z_1\arrow[rd,"i_3"]\arrow[rr,"{\rm id}",near start]&&Z_1\arrow[rd,"i_4"]\\
    &S_2\arrow[rr,"g_4"]&&S_1\times S_1\times S_2\arrow[uu,leftarrow,crossing over,"g_3",near start] \arrow[rr,shift left,"h_1"]\arrow[rr,shift right,"h_2"']&&S_1\times S_1\times S_1
  \end{tikzcd}\]
  in $\mathcal{C}$ where
  \begin{enumerate}[(i)]
    \item $i_3=(i_1,i_1,i_2)$, $i_4=(i_1,i_1,i_1)$,
    \item $g_1=(u_1,u_2,{\rm id})$, $g_2=(u_2,u_1,{\rm id})$, $g_3=(u_1,u_1,{\rm id})$, $g_4=(u_2,u_2,{\rm id})$,
    \item $p_1$ denotes the first projection,
    \item $h_1=\tau_{23}\circ ({\rm id}\times {\rm id}\times u_2)$, $h_2=\tau_{13}\circ ({\rm id}\times {\rm id}\times u_2)$, $h_3={\rm id}\times {\rm id}\times u_1$ where $\tau_{ij}$ denotes the transposition of $i$-th and $j$-th $S_1$.
  \end{enumerate}
  Note that $g_4$ is a closed immersion since $S_1$ is separated. We have the natural transformations
  \[\begin{split}
    \widetilde{f_1}_*&\stackrel{\sim}\longrightarrow \widetilde{p_1}_*\widetilde{g_1}_*\stackrel{ad'^{-1}}\longrightarrow  \widetilde{p_1}_*\widetilde{h_3}^*\widetilde{h_3}_* \widetilde{g_1}_*\stackrel{\sim}\longrightarrow  \widetilde{p_1}_*\widetilde{h_3}^*\widetilde{h_1}_*\widetilde{g_3}_*\stackrel{ad}\longrightarrow  \widetilde{p_1}_*\widetilde{h_3}^*\widetilde{h_1}_*\widetilde{g_4}_*\widetilde{g_4}^*\widetilde{g_3}_*\\
    &\stackrel{\sim}\longrightarrow \widetilde{p_1}_*\widetilde{h_3}^*(\widetilde{h_1}\widetilde{g_4})_*\widetilde{g_4}^*\widetilde{g_3}_*\stackrel{\sim}\longrightarrow
    \widetilde{p_1}_*\widetilde{h_3}^*(\widetilde{h_2}\widetilde{g_4})_*\widetilde{g_4}^*\widetilde{g_3}_*\stackrel{\sim}\longrightarrow \widetilde{p_1}_*\widetilde{h_3}^*\widetilde{h_2}_*\widetilde{g_4}_*\widetilde{g_4}^*\widetilde{g_3}_*\\
    &\stackrel{ad^{-1}}\longrightarrow
    \widetilde{p_1}_*\widetilde{h_3}^*\widetilde{h_2}_*\widetilde{g_3}_*\stackrel{\sim}\longrightarrow \widetilde{p_1}_*\widetilde{h_3}^*\widetilde{h_3}_*\widetilde{g_2}_*\stackrel{ad}\longrightarrow \widetilde{p_1}_*\widetilde{g_2}_*\stackrel{\sim}\longrightarrow \widetilde{f_2}_*.
  \end{split}\]
  Here, the second arrow is defined and an isomorphism by (Loc), and the tenth arrow is an isomorphism by the same reason. The fourth arrow is an isomorphism by (\ref{5.2}) since $g_4$ is a closed immersion, and the eighth arrow is defined and an isomorphism by the same reason. Thus the composition is an isomorphism. Its left adjoint is $\widetilde{f_1}^*\cong \widetilde{f_2}^*$.
\end{proof}
\begin{thm}\label{5.6}
  Under the notations and hypotheses of {\rm (\ref{5.1})}, $\widetilde{f_1}^*$ is an equivalence.
\end{thm}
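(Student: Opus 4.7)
The plan is to reduce the general case to (\ref{5.2}) via two successive graph-type factorizations, ultimately arriving at a ``retraction'' configuration in which pullback admits a manifest quasi-inverse. All intermediate morphisms will have $u$-parts that are closed immersions, so every reduction step will be justified by (\ref{5.2}).

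First, I would factor $u_1=\pi_1\circ \Gamma_{u_1}$ via the graph $\Gamma_{u_1}=(u_1,{\rm id}_{S_2}):S_2\rightarrow S_1\times S_2$, which is a closed immersion since $S_1$ is separated. Setting $\widetilde{Y}=(Z_1\rightarrow S_1\times S_2)$ with structure map $(i_1,i_2)$, a closed immersion by (\ref{2.4}) since $Z_1$ is separated, one obtains $\widetilde{f_1}=\widetilde{a}\circ \widetilde{c}$ in $\widetilde{\mathcal{C}}$, where $\widetilde{c}:\widetilde{X_2}\rightarrow \widetilde{Y}$ has $u$-part $\Gamma_{u_1}$ and $\widetilde{a}:\widetilde{Y}\rightarrow \widetilde{X_1}$ has $u$-part $\pi_1$. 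Since $\widetilde{c}^*$ is an equivalence by (\ref{5.2}), it suffices to show $\widetilde{a}^*$ is an equivalence.

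Next, I would iterate, exploiting the closed subscheme $Z_1\times S_2\hookrightarrow S_1\times S_2$. Introduce $\widetilde{Y}'=(Z_1\rightarrow Z_1\times S_2)$ with structure map $({\rm id},i_2)$, and the morphism $\widetilde{d}:\widetilde{Y}'\rightarrow \widetilde{Y}$ with $u$-part $i_1\times {\rm id}_{S_2}$, a closed immersion. The composition $\widetilde{a}\widetilde{d}$ has $u$-part $i_1\circ \pi_{Z_1}$, where $\pi_{Z_1}:Z_1\times S_2\rightarrow Z_1$ is the first projection; this $u$-part factors through $Z_1$, giving $\widetilde{a}\widetilde{d}=\widetilde{e_1}\circ \widetilde{e_2}$, where $\widetilde{e_2}:\widetilde{Y}'\rightarrow (Z_1\rightarrow Z_1)$ has $u$-part $\pi_{Z_1}$ and $\widetilde{e_1}:(Z_1\rightarrow Z_1)\rightarrow \widetilde{X_1}$ has $u$-part $i_1$. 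By (\ref{5.2}), $\widetilde{d}^*$ and $\widetilde{e_1}^*$ are equivalences, and the pseudofunctor coherence isomorphism $\widetilde{d}^*\widetilde{a}^*\cong \widetilde{e_2}^*\widetilde{e_1}^*$ reduces the problem to showing $\widetilde{e_2}^*$ is an equivalence.

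Finally, observe that $\widetilde{e_2}$ is in a retraction configuration: $\pi_{Z_1}\circ ({\rm id},i_2)={\rm id}_{Z_1}$. Define the reverse morphism $\widetilde{g}:(Z_1\rightarrow Z_1)\rightarrow \widetilde{Y}'$ with $u$-part $({\rm id},i_2)$, a closed immersion by (\ref{2.4}); hence $\widetilde{g}^*$ is an equivalence by (\ref{5.2}). The composition $\widetilde{e_2}\circ \widetilde{g}$ is literally the identity morphism on $(Z_1\rightarrow Z_1)$ in $\widetilde{\mathcal{C}}$, because both the $u$-part $\pi_{Z_1}\circ ({\rm id},i_2)$ and the $Z$-part collapse to ${\rm id}_{Z_1}$. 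Pulling back yields $\widetilde{g}^*\widetilde{e_2}^*\cong {\rm id}$, and since $\widetilde{g}^*$ is an equivalence, $\widetilde{e_2}^*$ must be isomorphic to a quasi-inverse of $\widetilde{g}^*$ and is therefore itself an equivalence. The main point requiring care is the verification that the structure maps $(i_1,i_2)$, $({\rm id},i_2)$ and the $u$-parts $\Gamma_{u_1}$, $i_1\times {\rm id}_{S_2}$ are all closed immersions; this follows from (\ref{2.4}) and the separatedness of all objects of $\mathscr{S}$. Notably, this approach bypasses (\ref{5.3}), (\ref{5.4}), and (\ref{5.5}) entirely.
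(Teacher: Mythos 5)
Your opening move — factoring $u_1=p_1\circ q_1$ through the graph $q_1:S_2\rightarrow S_1\times S_2$ and reducing to the projection via (\ref{5.2}) — is exactly the paper's first step. After that the two arguments diverge: the paper passes to the pushout $Y$ from \cite[3.9]{Sch05} and then localizes via (\ref{5.3}), (\ref{5.4}), (\ref{5.5}), whereas you iterate a second graph factorization through $Z_1\times S_2$ and finish with a retraction involving $(Z_1\rightarrow Z_1)$.

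There is a genuine gap in your second and third steps. The objects of $\widetilde{\mathcal{C}}$ on which $G$ is actually defined are closed immersions $(Z\rightarrow S)$ with $S\in\mathcal{E}=\mathscr{S}^{sm}$: by (\ref{4.3}), $G(Z\rightarrow S)$ is a full subcategory of $V(S)$, and $V$ is only a pseudofunctor on $\mathscr{S}^{sm}$. In the situation of (\ref{5.1}), $Z_1$ is an arbitrary object of $\mathscr{S}$; it need not be smooth over $T$, so neither $Z_1$ nor $Z_1\times_T S_2$ lies in $\mathscr{S}^{sm}$ in general. Consequently $\widetilde{Y}'=(Z_1\rightarrow Z_1\times S_2)$ and $(Z_1\rightarrow Z_1)$ are not objects of $\widetilde{\mathcal{C}}$, $G$ is not defined on them, and (\ref{5.2}) — whose proof uses $u_1^*,u_{1*}$ from $V$ and the $\rho$-functors of (\ref{4.5}) on both ambient schemes — cannot be applied to $\widetilde{d}$, $\widetilde{e_1}$, or $\widetilde{g}$. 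Your caveat "the main point requiring care" checks that the various structure maps are closed immersions, but misses the separate and more basic constraint that their targets must be smooth.

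This is not a repairable detail; it is the structural obstacle the construction is designed around. The entire point of passing through $\widetilde{\mathcal{C}}$ is to build a theory on $\mathscr{S}$ out of one on $\mathscr{S}^{sm}$ by always working relative to a smooth ambient, so $Z_1$ can never serve as its own ambient scheme. Your observation that the argument "bypasses (\ref{5.3}), (\ref{5.4}), and (\ref{5.5}) entirely" is in fact a warning sign: the Schwede gluing and the Zariski locality results exist precisely because a direct reduction of this type is unavailable.
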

\begin{proof}
  Consider the diagram
  \[\begin{tikzcd}
    Z_1\arrow[r,"i_3"]\arrow[d,"i_1"]&S_1\times S_2\\
    S_1
  \end{tikzcd}\]
  in $\mathcal{C}$ where $i_3$ denotes the morphism induced by $i_1$ and $i_2$. By \cite[3.9]{Sch05}, this can be extended to a Cartesian diagram
  \[\begin{tikzcd}
    Z_1\arrow[r,"i_3"]\arrow[d,"i_1"]&S_1\times S_2\arrow[d,"b"]\\
    S_1\arrow[r,"a"]&Y
  \end{tikzcd}\]
  of schemes such that $a$ and $b$ are closed immersions. Choose a Zariski cover $\{Y_i\}_{i\in I}$ of $Y$ such that $I$ is finite and each $Y_i$ is affine, and put
  \[U_i=a^{-1}(Y_i),\quad V_i=q_1^{-1}b^{-1}(Y_i),\quad W_i=i_1^{-1}(U_i)\]
  where $q_1:S_2\rightarrow S_1\times S_2$ denotes the graph morphism of $u_1$. Then we have the commutative diagram
  \[\begin{tikzcd}
      W_i\arrow[d,"c_i"]\arrow[r,"d_i"]&U_i\times V_i\arrow[d,"b_i"]\\
      U_i\arrow[r,"a_i"]&Y_i
    \end{tikzcd}\]
  in $\mathcal{C}$ where $W_i=i_1^{-1}(U_i)$, $c_i$ and $d_i$ are closed immersions induced by $i_1$ and $i_2$, and $a_i$ and $b_i$ are morphisms induced by $a$ and $b$. Note that $a_i$ is a closed immersion and that $b_i$ is an immersion.

  The morphism $u_1:S_2\rightarrow S_1$ has the factorization
  \[S_2\stackrel{q_1}\rightarrow S_1\times S_2\stackrel{p_1}\rightarrow S_1\]
  where $p_1$ denotes the projection. Since $S_1$ is separated, $q_1$ is a closed immersion. Hence by (\ref{5.2}), we reduce to the case when $\widetilde{f_1}$ is given by the commutative diagram
  \[\begin{tikzcd}
    (Z_1\arrow[d,"{\rm id}"]\arrow[r,"i_3"]&S_1\times S_2)\arrow[d,"p_1"]\\
    (Z_1\arrow[r,"i_1"]&S_1)
  \end{tikzcd}\]
  in $\mathcal{C}$. Then by (\ref{5.4}), we reduce to the case when $\widetilde{f_1}$ is given by the commutative diagram
  \[\begin{tikzcd}
    (W_i\arrow[d,"{\rm id}"]\arrow[r,"d_i'"]&U_i\times S_2)\arrow[d,"r_i"]\\
    (W_i\arrow[r,"c_i"]&U_i)
  \end{tikzcd}\]
  in $\mathcal{C}$ where $d_i'$ denotes the morphism induced by $i_3$ and $r_i$ denotes the projection. Since $d_i'$ has the factorization
  \[W_i\stackrel{d_i}\rightarrow U_i\times V_i\rightarrow U_i\times S_2\]
  where the second arrow is the open immersion induced by the open immersion $V_i\rightarrow S_2$, by (\ref{5.3}), we reduce to the case when $\widetilde{f_1}$ is given by the commutative diagram
  \[\begin{tikzcd}
    (W_i\arrow[d,"{\rm id}"]\arrow[r,"d_i"]&U_i\times V_i)\arrow[d,"r_i'"]\\
    (W_i\arrow[r,"c_i"]&U_i)
  \end{tikzcd}\]
  in $\mathcal{C}$ where $r_i'$ denotes the projection. By (\ref{5.2}), since $a_i$ is a closed immersion, we reduce to the case when $\widetilde{f_1}$ is given by the commutative diagram
  \[\begin{tikzcd}
    (W_i\arrow[d,"{\rm id}"]\arrow[r,"d_i"]&U_i\times V_i)\arrow[d,"a_ir_i'"]\\
    (W_i\arrow[r,"a_ic_i"]&Y_i)
  \end{tikzcd}\]
  in $\mathcal{C}$. Consider the morphism $\widetilde{f_2}$ given by the commutative diagram
  \[\begin{tikzcd}
    (W_i\arrow[d,"{\rm id}"]\arrow[r,"d_i"]&U_i\times V_i)\arrow[d,"b_i"]\\
    (W_i\arrow[r,"a_ic_i"]&Y_i)
  \end{tikzcd}\]
  in $\mathcal{C}$. By (\ref{5.2}) and (\ref{5.3}), $\widetilde{f_2}^*$ is an equivalence since $b_i$ is an immersion, and by (\ref{5.5}), $\widetilde{f_1}^*\cong \widetilde{f_2}^*$. Thus $\widetilde{f_1}^*$ is an equivalence.
\end{proof}
\section{Proof of (\ref{0.4}), part IV}
\begin{thm}\label{6.1}
  Consider a commutative diagram
  \[\begin{tikzcd}
    &&U_{12}\arrow[lld,"u_1"']\arrow[rd,"u_2"]\arrow[lddd,"f_{12}"',near start]\\
    U_1\arrow[rdd,"f_1"']\arrow[rd,"j_1"]&&&U_2\arrow[lld,"j_2"',crossing over]\arrow[lldd,"f_2"]\\
    &X\arrow[d,"f"']\\
    &S
  \end{tikzcd}\]
  in $\mathscr{S}$ where the small inner square is Cartesian and $j_1$ and $j_2$ are open immersions. Let $\mathscr{P}$ be a class of morphisms in $\mathscr{S}$ containing all open immersions and stable by compositions and pullbacks, and let $H:\mathscr{S}\rightarrow {\rm Tri}^\otimes$ be a $\mathscr{P}$-motivic pseudofunctor satisfying {\rm (Loc)}. If $f_1^*$, $f_2^*$, and $f_{12}^*$ have left adjoints, then $f^*$ also has a left adjoint.
\end{thm}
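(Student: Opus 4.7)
The plan is to construct a left adjoint $f_\sharp$ object-by-object by a representability argument based on the Mayer--Vietoris triangle of (\ref{2.15}) attached to the open cover $\{j_1,j_2\}$ of $X$. First I note that $u_1, u_2, j_1, j_2$, and $j_{12}=j_1u_1=j_2u_2$ are all open immersions (open immersions being stable under pullback), so they lie in $\mathscr{P}$ and possess left adjoints $u_{i\sharp}, j_{i\sharp}, j_{12\sharp}$, while $f_{1\sharp}, f_{2\sharp}, f_{12\sharp}$ exist by hypothesis. By uniqueness of adjoints and the factorizations $f_i^* = j_i^* f^*$ and $f_{12}^* = j_{12}^* f^*$, any left adjoint of $f^*$ would satisfy $f_\sharp j_{i\sharp} \cong f_{i\sharp}$ and $f_\sharp j_{12\sharp} \cong f_{12\sharp}$; this dictates how such an adjoint must act on the Mayer--Vietoris triangle of each $K$.

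For each object $K$ of $H(X)$, I would therefore choose a distinguished triangle
\[
f_{12\sharp} j_{12}^* K \longrightarrow f_{1\sharp} j_1^* K \oplus f_{2\sharp} j_2^* K \longrightarrow f_\sharp K \longrightarrow f_{12\sharp} j_{12}^* K[1],
\]
in which the first arrow is built component-wise from the isomorphisms $f_{12\sharp} j_{12}^* K \cong f_{i\sharp} u_{i\sharp} u_i^* j_i^* K$ followed by the counits $u_{i\sharp} u_i^* \to \mathrm{id}$, with opposite signs for $i=1,2$ --- exactly parallel to the construction of the Mayer--Vietoris differential in (\ref{2.15}). Applying $\mathrm{Hom}_{H(S)}(-,L)$ to this triangle and rewriting each outer term via the adjunctions $f_{i\sharp} \dashv j_i^* f^*$ and $f_{12\sharp} \dashv j_{12}^* f^*$ yields a long exact sequence whose outer terms coincide with those obtained by applying $\mathrm{Hom}_{H(X)}(-, f^*L)$ to the Mayer--Vietoris triangle for $K$. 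The five lemma then produces a bijection $\mathrm{Hom}_{H(S)}(f_\sharp K, L) \cong \mathrm{Hom}_{H(X)}(K, f^*L)$ natural in $L$, and Yoneda promotes the assignment $K \mapsto f_\sharp K$ to a well-defined functor left adjoint to $f^*$.

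The main obstacle will be the compatibility verification in the second step: I must check that, after passing through the adjunction isomorphisms, the connecting map $\mathrm{Hom}(f_{1\sharp} j_1^* K \oplus f_{2\sharp} j_2^* K, L) \to \mathrm{Hom}(f_{12\sharp} j_{12}^* K, L)$ truly agrees with the Mayer--Vietoris differential $\mathrm{Hom}(j_{1\sharp} j_1^* K \oplus j_{2\sharp} j_2^* K, f^*L) \to \mathrm{Hom}(j_{12\sharp} j_{12}^* K, f^*L)$. This reduces to a diagram chase verifying that the counit $u_{i\sharp} u_i^* \to \mathrm{id}$ interacts correctly with the pseudofunctor isomorphism $f_{i\sharp} u_{i\sharp} \cong f_{12\sharp}$ under the adjunction $f_{i\sharp} \dashv f_i^*$; once this is established, the five-lemma comparison is routine and functoriality of $f_\sharp$ in $K$ follows automatically from the Yoneda characterization of the representing object.
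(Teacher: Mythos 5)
Your construction of $f_\sharp K$ as the cone of the Mayer--Vietoris-type map
\[
\eta : f_{12\sharp} j_{12}^* K \longrightarrow f_{1\sharp} j_1^* K \oplus f_{2\sharp} j_2^* K
\]
matches step (I) of the paper's proof, and the final Yoneda step is also sound once you have a natural-in-$L$ bijection $\mathrm{Hom}_{H(S)}(f_\sharp K, L) \cong \mathrm{Hom}_{H(X)}(K, f^*L)$. The gap is in how you propose to get that bijection. You write that the five lemma ``produces'' it from the two long exact sequences attached to the triangle for $f_\sharp K$ and the Mayer--Vietoris triangle for $K$. But the five lemma compares two exact rows \emph{given} a commutative ladder including a vertical arrow in the middle position; it does not create that arrow. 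With only matching outer terms, the middle terms of two long exact sequences of abelian groups need not be isomorphic (they are extensions with the same sub and quotient but possibly different extension classes), and even when they happen to be, there is no canonical isomorphism. So ``natural in $L$'' --- which you need for Yoneda --- is exactly what this argument cannot deliver. The compatibility diagram chase you flag as ``the main obstacle'' is a legitimate verification, but it is secondary to the fact that you have no candidate middle map at all.

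The paper resolves this by first constructing the comparison morphism explicitly. In step (II) it introduces natural transformations $\mu_i : j_{i\sharp} \to f^* f_{i\sharp}$ (left adjoints of $f_i^* f_* \xrightarrow{\sim} j_1^* f^* f_* \xrightarrow{ad'} j_1^*$, and similarly for $i = 2, 12$), observes that these fit into a morphism from the Mayer--Vietoris triangle of $K$ in $H(X)$ to $f^*$ applied to the defining triangle of $f_\sharp K$, and invokes the triangulated-category axiom TR3 to \emph{choose} a filling $ad : K \to f^* f_\sharp K$. That chosen unit is what defines $\varphi : \mathrm{Hom}_{H(S)}(f_\sharp K, L) \to \mathrm{Hom}_{H(X)}(K, f^*L)$, and only then does a five-lemma-style reduction (to $K$ in the essential image of $j_{1\sharp}$, $j_{2\sharp}$, or $j_{12\sharp}$) establish bijectivity. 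To repair your proposal you need to insert this construction of the unit before invoking the five lemma; without it, the representability and functoriality claims do not follow.
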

\begin{proof}
  Put $j_{12}=j_1u_1$, and we denote by $f_{1\sharp}$ (resp.\ $f_{2\sharp}$, resp.\ $f_{12\sharp}$) the left adjoint of $f_1^*$ (resp.\ $f_2^*$, resp.\ $f_{12}^*$).\\[4pt]
  (I) {\it Construction of $f_\sharp K$.} Consider the natural transformation
  \[\eta:f_{12\sharp}j_{12}^*\longrightarrow f_{1\sharp}j_1^*\oplus f_{2\sharp}j_2^*\]
  given by the left adjoint of the composition
  \[\begin{split}
    j_{1*}f_1^*\oplus j_{2*}f_2^*&\stackrel{\sim}\longrightarrow j_{1*}j_1^*f^*\oplus j_{2*}j_2^*f^*\\
    &\stackrel{ad\oplus ad}\longrightarrow j_{1*}u_{1*}u_1^*j_1^*f^*\oplus j_{2*}u_{2*}u_2^* j_2^*f^*\stackrel{\sim}\longrightarrow j_{12*}f_{12}^*\oplus j_{12*}f_{12}^*\longrightarrow j_{12*}f_{12}^*
  \end{split}\]
  where the fourth arrow is the summation. For any object $K$ of $H(X)$, using an axiom of triangulated categories, {\it choose} $f_\sharp K$ as a cone of $f_{12\sharp}j_{12}^*\stackrel{\eta}\longrightarrow f_{1\sharp}j_1^*\oplus f_{2\sharp}j_2^*$. Then we have a distinguished triangle
  \begin{equation}\label{6.1.1}
    f_{12\sharp}j_{12}^*K\stackrel{\eta}\longrightarrow f_{1\sharp}j_1^*K\oplus f_{2\sharp}j_2^*K\longrightarrow f_\sharp K\longrightarrow f_{12\sharp}j_{12}^*K\longrightarrow f_{1\sharp}j_1^*[1].
  \end{equation}
  (II) {\it Construction of $K\stackrel{ad}\longrightarrow f^*f_\sharp K$.}
  Let $K$ be an object of $H(X)$. Consider the natural transformation
  \[\mu_1:j_{1\sharp}\longrightarrow f^*f_{1\sharp}\]
  given by the left adjoint of the composition
  \[f_1^*f_*\stackrel{\sim}\longrightarrow j_1^*f^*f_*\stackrel{ad'}\longrightarrow j_1^*.\]
  We similarly have the natural transformations
  \[\mu_2:j_{2\sharp}\longrightarrow f^*f_{2\sharp},\]
  \[\mu_{12}:j_{12\sharp}\longrightarrow f^*f_{12\sharp}.\]
  Then we have the commutative diagram
  \begin{equation}\label{6.1.2}\begin{tikzcd}
    j_{12\sharp}j_{12}^*K\arrow[d,"\mu_{12}"]\arrow[r]&j_{1\sharp}j_1^*K\oplus j_{2\sharp}j_2^*K\arrow[d,"\mu_1\oplus \mu_2"]\arrow[r]&K\arrow[r]&j_{12\sharp}j_{12}^*K[1] \arrow[d,"\mu_{12}"]\\
    f^*f_{12\sharp}j_{12}^*K\arrow[r,"\eta"]&f^*f_{1\sharp}j_1^*K\oplus f^*f_{2\sharp}j_2^*K\arrow[r]&f^*f_\sharp K\arrow[r]&f^*f_{12\sharp}j_{12}^*K[1]
  \end{tikzcd}\end{equation}
  in $H(X)$ where the first row is the Mayer-Vietoris distinguished triangle and the second row is induced by the distinguished triangle (\ref{6.1.1}). Using an axiom of triangulated categories, {\it choose} a morphism $K\stackrel{ad}\longrightarrow f^*f_\sharp K$ in $H(X)$ making the above diagram commutative.\\[4pt]
  (III) {\it Construction of ${\rm Hom}_{H(S)}(f_\sharp K,L)\stackrel{\sim}\longrightarrow {\rm Hom}_{H(X)}(K,f^*L)$.} Consider the homomorphism
  \[\varphi:{\rm Hom}_{H(S)}(f_\sharp K,L)\longrightarrow {\rm Hom}_{H(X)}(K,f^*L)\]
  of abelian groups given by
  \[(f_\sharp K\stackrel{\alpha}\longrightarrow L)\mapsto (K\stackrel{ad}\longrightarrow f^*f_\sharp K\stackrel{\alpha}\longrightarrow f^*L).\]
  We will show that $\varphi$ is bijective. Using the Mayer-Vietoris distinguished triangle
  \[j_{12\sharp}j_{12}^*K\longrightarrow j_{1\sharp}j_1^*K\oplus j_{2\sharp}j_2^*K\longrightarrow K\longrightarrow j_{12\sharp}j_{12}^*K[1],\]
  we reduce to the case when $K$ is in the essential image of $j_{1\sharp}$, $j_{2\sharp}$, or $j_{12\sharp}$. We will only consider the case when $K$ is in the essential image of $j_{1\sharp}$ since the proofs for the other two cases are the same.

  If $K=j_{1\sharp} K'$ for some object $K'$ of $H(X_1)$, using (\ref{2.7}(5)) to (\ref{6.1.1}), we have the isomorphism
  \[\psi:f_{1\sharp}K'\longrightarrow f_{\sharp}j_{1\sharp}K'.\]
  Then using (\ref{2.7}(5)) to (\ref{6.1.2}), we have the commutative diagram
  \begin{equation}\label{6.1.3}\begin{tikzcd}
    &j_{1\sharp}K'\arrow[ld,"\mu_1"']\arrow[d,"ad"]\\
    f^*f_{1\sharp}K'\arrow[r,"\psi"]&f^*f_{\sharp}j_{1\sharp}K'
  \end{tikzcd}\end{equation}
  in $H(X)$.

  Let $L$ be an object of $H(S)$, and consider the diagram
  \begin{equation}\label{6.1.6}\begin{tikzcd}
    {\rm Hom}_{H(S)}(f_\sharp j_{1\sharp}K',L)\arrow[r,"\varphi"]\arrow[d,"\psi"]&{\rm Hom}_{H(X)}( j_{1\sharp}K',f^*L)\arrow[r,"\nu'"]&{\rm Hom}_{H(X_1)}(K',j_1^*f^*L)\arrow[d,"\sim"]\\
    {\rm Hom}_{H(S)}(f_{1\sharp}K',L)\arrow[rr,"\nu"]&&{\rm Hom}_{H(X_1)}(K',f_1^*L)
  \end{tikzcd}\end{equation}
  of abelian groups where $\nu$ (resp.\ $\nu'$) denotes the isomorphisms induced by the adjunction of $f_{1\sharp}$ and $f_1^*$ (resp.\ $j_{1\sharp}$ and $j_1^*$). Then consider an element $\alpha\in {\rm Hom}_{H(S)}(f_{\sharp}j_{1\sharp}K',L)$. Its image in ${\rm Hom}_{H(X_1)}(K',f_1^*L)$ via the upper right route and the lower left route are the compositions
  \[K'\stackrel{ad}\longrightarrow j_1^*j_{1\sharp}K'\stackrel{ad}\longrightarrow j_1^*f^*f_\sharp j_{1\sharp}K'\stackrel{\alpha}\longrightarrow j_1^*f^*L\stackrel{\sim}\longrightarrow f_1^*L,\]
  \[K'\stackrel{ad}\longrightarrow f_1^*f_{1\sharp}K'\stackrel{\psi^{-1}}\longrightarrow f_1^*f_\sharp j_{1\sharp}K'\stackrel{\alpha}\longrightarrow f_1^*L\]
  respectively. They are equal since the diagram
  \[\begin{tikzcd}
    K'\arrow[r,"ad"]\arrow[rdd,"ad"']&j_1^*j_{1\sharp}K'\arrow[d,"\mu"]\arrow[r,"ad"]&j_1^*f^*f_\sharp j_{1\sharp}K'\arrow[dd,"\sim"]\arrow[r,"\alpha"]&j_1^*f^*L\arrow[dd,"\sim"]\\
    &j_1^*f^*f_{1\sharp}K'\arrow[ru,"\psi"]\arrow[d,"\sim"]\\
    &f_1^*f_{1\sharp}K'\arrow[r,"\psi"]&f_1^*f_\sharp j_{1\sharp}K'\arrow[r,"\alpha"]&f_1^*L
  \end{tikzcd}\]
  in $H(S)$ commutes by the commutativity of (\ref{6.1.3}). Thus (\ref{6.1.6}) commutes. This means that the homomorphism
  \[\varphi:{\rm Hom}_{H(S)}(f_\sharp K,L)\longrightarrow {\rm Hom}_{H(X)}(K,f^*L)\]
  is an isomorphism.\\[4pt]
  (IV) {\it Construction of $f_\sharp K\longrightarrow f_\sharp L$.} Let $K$ and $L$ be objects of $H(X)$. By (III), the homomorphism
  \[\varphi:{\rm Hom}_{H(S)}(f_\sharp K,f_\sharp L)\longrightarrow {\rm Hom}_{H(X)}(K,f^*f_\sharp L)\]
  given by
  \[(f_\sharp K\stackrel{\alpha'}\longrightarrow f_\sharp L)\mapsto (K\stackrel{ad}\longrightarrow f^*f_\sharp K\stackrel{\alpha'}\longrightarrow f^*f_\sharp L)\]
  is an isomorphism. This means that for any morphism $\alpha:K\rightarrow L$ in $H(X)$, in the diagram
  \[\begin{tikzcd}
    K\arrow[d,"ad"]\arrow[r,"\alpha"]&L\arrow[d,"ad"]\\
    f^*f_\sharp K&f^*f_\sharp L
  \end{tikzcd}\]
  in $H(X)$, there is a unique morphism $f_\sharp K\longrightarrow f_\sharp L$ in $H(S)$ such that the induced morphism $f^*f_\sharp K\longrightarrow f^*f_\sharp L$ makes the above diagram commutative. This morphism is denoted by $f_\sharp \alpha:f_\sharp K\rightarrow f_\sharp L$.\\[4pt]
  (V) {\it Functoriality of $f_\sharp$.} Let $\alpha:K\rightarrow L$ and $\beta:L\rightarrow M$ be morphisms in $H(S)$. Consider the diagram
  \[\begin{tikzcd}
    K\arrow[d,"ad"]\arrow[r,"\alpha"]&L\arrow[d,"ad"]\arrow[r,"\beta"]&M\arrow[d,"ad"]\\
    f^*f_\sharp K&f^*f_\sharp L&f^*f_\sharp M
  \end{tikzcd}\]
  in $H(S)$. By definition, $f_\sharp \alpha$ (resp.\ $f_\sharp \beta$, resp.\ $f_\sharp (\beta\alpha)$) is a unique morphism such that the induced morphism $f^*f_\sharp K\longrightarrow f^*f_\sharp L$ (resp.\ $f^*f_\sharp K\longrightarrow f^*f_\sharp L$, resp.\ $f^*f_\sharp K\longrightarrow f^*f_\sharp M$) makes the above diagram commutative. Thus
  \[f_\sharp \beta\circ f_\sharp \alpha=f_\sharp (\beta\circ \alpha),\]
  so $f_\sharp$ is a functor.\\[4pt]
  (VI) {\it Final step of the proof.} We will show that $f_\sharp$ is left adjoint to $f^*$ to complete the proof. For this, it suffices to show that the isomorphism
  \[\varphi:{\rm Hom}_{H(S)}(f_\sharp K,L)\longrightarrow {\rm Hom}_{H(X)}(K,f^*L)\]
  is functorial on $K$ and $L$. Let $\beta:K\rightarrow K'$ be a morphism in $H(X)$, and consider the diagram
  \begin{equation}\label{6.1.4}\begin{tikzcd}
    {\rm Hom}_{H(S)}(f_\sharp K',L)\arrow[r,"\varphi"]\arrow[d,"\beta"]&{\rm Hom}_{H(X)}(K',f^*L)\arrow[d,"\beta"]\\
    {\rm Hom}_{H(S)}(f_\sharp K,L)\arrow[r,"\varphi"]&{\rm Hom}_{H(X)}(K,f^*L)
  \end{tikzcd}\end{equation}
  of abelian groups. For an element $\alpha\in {\rm Hom}_{H(S)}(f_\sharp K',L)$, its image in ${\rm Hom}_{H(X)}(K,f^*L)$ via the upper right route and lower left route are the compositions
  \[K\stackrel{\beta}\longrightarrow K'\stackrel{ad}\longrightarrow f^*f_\sharp K'\stackrel{\alpha}\longrightarrow f^*L,\]
  \[K\stackrel{ad}\longrightarrow f^*f_\sharp K\stackrel{\beta}\longrightarrow f^*f_\sharp K'\stackrel{\alpha}\longrightarrow f^*L\]
  respectively. They are equal since the diagram
  \[\begin{tikzcd}
    K\arrow[d,"ad"]\arrow[r,"\beta"]&K'\arrow[d,"ad"]\arrow[r,"ad"]&f^*f_\sharp K'\arrow[d,"\alpha"]\\
    f^*f_\sharp K\arrow[r,"\beta"]&f^*f_\sharp K'\arrow[r,"\alpha"]&f^*L
  \end{tikzcd}\]
  in $H(X)$ commutes. Thus (\ref{6.1.4}) commutes, so $\varphi$ is functorial on $K$.

  Let $\gamma:L\rightarrow L'$ be a morphism in $H(X)$, and consider the diagram
  \begin{equation}\label{6.1.5}\begin{tikzcd}
    {\rm Hom}_{H(S)}(f_\sharp K,L)\arrow[r,"\varphi"]\arrow[d,"\gamma"]&{\rm Hom}_{H(X)}(K,f^*L)\arrow[d,"\gamma"]\\
    {\rm Hom}_{H(S)}(f_\sharp K,L')\arrow[r,"\varphi"]&{\rm Hom}_{H(X)}(K,f^*L')
  \end{tikzcd}\end{equation}
  of abelian groups. For any element $\alpha\in {\rm Hom}_{H(S)}(f_\sharp K,L)$, its image in ${\rm Hom}_{H(X)}(K,f^*L')$ via the upper right route and lower left route are both equal to the composition
  \[K\stackrel{ad}\longrightarrow f^*f_\sharp K\stackrel{\alpha}\longrightarrow f^*L\stackrel{\beta}\longrightarrow f^*L'.\]
  Thus (\ref{6.1.5}) commutes, so $\varphi$ is functorial on $L$. This completes the proof that $f_\sharp$ is left adjoint to $f^*$.
\end{proof}
\begin{prop}\label{6.4}
  Under the notations and hypotheses of {\rm (\ref{6.1})}, let $H':\mathscr{S}\rightarrow {\rm Tri}^{\otimes}$ be another pseudofunctor satisfying {\rm (Loc)}, and let $\gamma:H\rightarrow H'$ be a pseudonatural transformation satisfying {\rm (C--1)}. Assume that $f^*$, $f_1^*$, $f_2^*$, and $f_{12}^*$ have left adjoints denoted by $f_\sharp$, $f_{1\sharp}$, $f_{2\sharp}$, and $f_{3\sharp}$ respectively. If the exchange transformations
  \[f_{1\sharp}\gamma(U_1)\stackrel{Ex}\longrightarrow \gamma(S)f_{1\sharp},\quad f_{2\sharp}\gamma(U_2)\stackrel{Ex}\longrightarrow \gamma(S)f_{2\sharp},\quad f_{12\sharp}\gamma(U_{12})\stackrel{Ex}\longrightarrow \gamma(S)f_{12\sharp}\]
  are isomorphisms, then the exchange transformation
  \[f_{\sharp}\gamma(X)\stackrel{Ex}\longrightarrow \gamma(S)f_{\sharp}\]
  is an isomorphism.
\end{prop}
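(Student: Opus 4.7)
The plan is to show that the exchange transformation $Ex\colon f_\sharp' \gamma(X) \to \gamma(S) f_\sharp$ (where $f_\sharp'$ denotes the left adjoint in $H'$, which exists by hypothesis) fits as the third vertical arrow in a morphism between two distinguished triangles whose other two vertical arrows are the hypothesized exchange isomorphisms for $f_1$, $f_2$, $f_{12}$; the two-out-of-three principle for morphisms of distinguished triangles will then give that $Ex$ is itself an isomorphism. Applying the triangulated functor $\gamma(S)$ to the defining triangle (\ref{6.1.1}) for $f_\sharp K$ produces a distinguished triangle in $H'(S)$ with third term $\gamma(S) f_\sharp K$, while performing the construction of (\ref{6.1}) inside $H'$ on $\gamma(X) K$ produces a second distinguished triangle with third term $f_\sharp' \gamma(X) K$ and first two terms $f_{12\sharp}' j_{12}^* \gamma(X) K$ and $f_{1\sharp}' j_1^* \gamma(X) K \oplus f_{2\sharp}' j_2^* \gamma(X) K$.

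The first two terms of these triangles are identified by composing the pseudonaturality $2$-isomorphisms $\gamma_{j_i}\colon j_i^* \gamma(X) \stackrel{\sim}{\longrightarrow} \gamma(U_i) j_i^*$ (and likewise for $j_{12}$) with the hypothesized exchange isomorphisms $f_{i\sharp}' \gamma(U_i) \stackrel{\sim}{\longrightarrow} \gamma(S) f_{i\sharp}$ and $f_{12\sharp}' \gamma(U_{12}) \stackrel{\sim}{\longrightarrow} \gamma(S) f_{12\sharp}$. The key compatibility to verify is that the two squares resulting from these identifications and from $Ex$ commute. Since $\eta$ and its $H'$-analogue $\eta'$ are given as left adjoints of explicit composites built from the units $ad\colon {\rm id} \to u_{i*} u_i^*$, the coherence isomorphisms for $j_1 u_1 = j_{12} = j_2 u_2$, and the summation $j_{12*} f_{12}^* \oplus j_{12*} f_{12}^* \to j_{12*} f_{12}^*$, translating these squares under adjunction reduces them to commutativity statements among the pseudonaturality $2$-morphisms $\gamma_{j_i}$, $\gamma_{j_{12}}$, $\gamma_{u_i}$, $\gamma_f$, $\gamma_{f_i}$, and $\gamma_{f_{12}}$, which hold by the axioms of a pseudonatural transformation together with the compatibility of $Ex$ with composition.

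Once these squares commute, the two-out-of-three principle for morphisms of distinguished triangles gives that $Ex$ is an isomorphism, as required. The main obstacle is the compatibility check in the second paragraph: since $\eta$ is defined only indirectly via the adjunction $f_\sharp \dashv f^*$, one must pass to the right-adjoint description and carefully track pseudonaturality coherences across the compositions $j_1 u_1 = j_2 u_2 = j_{12}$. All the ingredients, however, are standard pseudonaturality axioms and the hypothesized exchange isomorphisms, so no further input is required.
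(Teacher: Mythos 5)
Your route is genuinely different from the paper's. The paper does not touch the triangle (\ref{6.1.1}) at all; instead it transposes to the right-adjoint exchange $f^*\gamma'(S)\to\gamma'(X)f^*$, where $\gamma'$ denotes the termwise right adjoint of $\gamma$ furnished by (C--1), observes that $j_i^*f^*\gamma'(S)\to j_i^*\gamma'(X)f^*$ is an isomorphism for each of $j_1$, $j_2$, $j_{12}$ (combining the hypothesized exchanges for $f_1,f_2,f_{12}$ with those for the open immersions $j_i$), and then applies the Mayer--Vietoris distinguished triangle $j_{12\sharp}j_{12}^*\to j_{1\sharp}j_1^*\oplus j_{2\sharp}j_2^*\to{\rm id}\to[1]$ of endofunctors of $H(X)$ to the two functors $f^*\gamma'(S)$ and $\gamma'(X)f^*$. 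Because the transposed $Ex$ is a single natural transformation of functors $H'(S)\to H(X)$, the resulting ladder between the two Mayer--Vietoris triangles commutes \emph{automatically} by naturality, and there is no compatibility square left to check; two-out-of-three then finishes.

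Your proposal instead stays on the $f_\sharp$ side and tries to exhibit $Ex\colon f'_\sharp\gamma(X)\to\gamma(S)f_\sharp$ as the third leg of a morphism from the $H'$-analogue of (\ref{6.1.1}) to $\gamma(S)$ applied to (\ref{6.1.1}). This can be made to work, but your sketch of the ``key compatibility'' does not actually close the gap. The trouble is the square involving the arrows $f_{1\sharp}j_1^*K\oplus f_{2\sharp}j_2^*K\to f_\sharp K$ and its $H'$-analogue: in the proof of (\ref{6.1}) that arrow is part of a \emph{chosen} cone, not the left adjoint of an explicit composite, so the device you invoke --- translating under adjunction and reducing to pseudonaturality coherences --- applies to $\eta$ and $\eta'$ but not, as stated, to this arrow. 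To repair this you would first want to replace the ad hoc triangles by canonical ones: once $f_\sharp\dashv f^*$ and $f'_\sharp\dashv f'^*$ are in hand, apply the triangulated left adjoints $f_\sharp$, $f'_\sharp$ to the natural Mayer--Vietoris triangle for ${\rm id}$ and use $f_\sharp j_{i\sharp}\cong f_{i\sharp}$; only then are all horizontal arrows built from units, counits and pseudofunctor coherences, making the pseudonaturality computation you describe available. A further small point: ``performing the construction of (\ref{6.1}) inside $H'$'' conflates the left adjoint $f'_\sharp$ assumed in the hypotheses of (\ref{6.4}) with one you construct; these would need to be identified. None of this is a wrong idea, but the paper's transposition to $f^*\gamma'\to\gamma'f^*$ exists precisely to sidestep all of these bookkeeping obligations.
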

\begin{proof}
  For each object $Y$ of $\mathcal{C}$, let $\gamma'(Y)$ denote the right adjoint of $\gamma$. Consider the right adjoints of the above natural transformations
  \[f_1^*\gamma'(S)\stackrel{Ex}\longrightarrow \gamma'(U_1)f_1^*,\quad f_2^*\gamma'(S)\stackrel{Ex}\longrightarrow \gamma'(U_2)f_2^*,\quad f_{12}^*\gamma'(S)\stackrel{Ex}\longrightarrow \gamma'(U_1)f_{12}^*,\quad f^*\gamma'(S)\stackrel{Ex}\longrightarrow \gamma'(X)f^*.\]
  The first three natural transformations are isomorphisms by assumption. Consider the commutative diagram
  \[\begin{tikzcd}
    j_1^*f^*\gamma'(S)\arrow[d,"\sim"]\arrow[r,"Ex"]&j_1^*\gamma'(X)f^*\arrow[r,"Ex"]&\gamma'(U_1)j_1^*f^*\arrow[d,"\sim"]\\
    f_1^*\gamma'(S)\arrow[rr,"Ex"]&&\gamma'(U_1)f_1^*
  \end{tikzcd}\]
  of functors. The lower horizontal arrow is an isomorphism by assumption, and the upper right horizontal arrow is an isomorphism since $\mathscr{P}$ contains all open immersions. Thus the upper left horizontal arrow is an isomorphism. Similarly we have isomorphisms
  \[j_{2}^*f^*\gamma'(S)\stackrel{Ex}\longrightarrow j_{2}^*\gamma'(X)f^*,\quad j_{12}^*f^*\gamma'(S)\stackrel{Ex}\longrightarrow j_{12}^*\gamma'(X)f^*\]
  are isomorphisms. Consider the commutative diagram
  \[\begin{tikzcd}
    j_{12\sharp}j_{12}^*f^*\gamma'(S)\arrow[r]\arrow[d,"Ex"]&j_{1\sharp}j_1^*f^*\gamma'(S)\oplus j_{2\sharp}j_2^*f^*\gamma'(S)\arrow[r]\arrow[d,"Ex\oplus Ex"]&f^*\gamma'(S)\arrow[r]\arrow[d,"Ex"]& j_{12\sharp}j_{12}^*f^*\gamma'(S)[1]\arrow[d,"Ex"]\\
    j_{12\sharp}j_{12}^*\gamma'(X)f^*\arrow[r]&j_{1\sharp}j_1^*\gamma'(X)f^*\oplus j_{2\sharp}j_2^*\gamma'(X)f^*\arrow[r]&f^*\gamma'(S)\arrow[r]& j_{12\sharp}j_{12}^*\gamma'(X)f^*[1]\\
  \end{tikzcd}\]
  of functors where each row is induced by the Mayer-Vietoris distinguished triangle. The first and second columns are isomorphisms, so the third column is an isomorphism.
\end{proof}
\begin{none}\label{6.2}
  Now, we prove the main theorem (\ref{0.4}).
\end{none}
\begin{thm}\label{6.3}
  Consider a diagram
  \[\begin{tikzcd}
    \mathscr{S}^{sm}\arrow[rd,"V"']\arrow[rr,"W"]&&\mathscr{S}\\
    &{\rm Tri}^{\otimes}
  \end{tikzcd}\]
  of $2$-categories where
  \begin{enumerate}[{\rm (i)}]
    \item $V$ is a $Sm$-premotivic pseudofunctor satisfying {\rm (Loc)},
    \item $W$ denotes the inclusion functor.
  \end{enumerate}
  Then there is a contravariant pseudofunctor $H:\mathscr{S}\rightarrow Tri^{\otimes}$ satisfying {\rm (Loc)} and a pseudonatural equivalence $\kappa:V\rightarrow H\circ W$ making the above diagram commutes.

  The above construction is functorial in the following sense. Suppose that we have a commutative diagram
  \[\begin{tikzcd}
    \mathscr{S}^{sm}\arrow[rd,"V'"]\arrow[rd,bend right,"V"']\arrow[rr,"W"]&&\mathscr{S}\arrow[ld,"H"']\arrow[ld,"H'",bend left]\\
    &{\rm Tri}^{\otimes}&\;
  \end{tikzcd}\]
  of $2$-categories and a diagram
  \[\begin{tikzcd}
    V\arrow[d,"\epsilon"]\arrow[r,"\kappa"]&H\circ W\\
    V'\arrow[r,"\kappa'"]&H'\circ W
  \end{tikzcd}\]
  of pseudofunctors where $V'$ and $H'$ are $Sm$-premotivic pseudofunctors satisfying {\rm (Loc)}, $\kappa'$ is a pseudonatural equivalence, and $\epsilon$ is a $Sm$-premotivic pseudonatural transformation. Then there exists a $Sm$-premotivic pseudonatural transformation $H\rightarrow H'$ unique up to isomorphism such that the induced $Sm$-pseudonatural transformation $H\circ W\rightarrow H'\circ W$ makes the diagram {\rm (\ref{0.4.1})} commutative.
\end{thm}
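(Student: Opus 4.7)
The approach is to factor the construction of $H$ through the intermediate pseudofunctor $G$ on $\widetilde{\mathcal{C}}$, descend along the essentially surjective functor $F: \widetilde{\mathcal{C}} \to \mathcal{C}$ using the machinery of Section 3, and finally extend $Sm$-premotivicity from the image $F(\widetilde{\mathscr{P}})$ to all of $Sm$ by Zariski descent via (\ref{6.1}).

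Concretely, I would first specialize (\ref{2.5}) by setting $\mathcal{C} = \mathscr{S}$, $\mathcal{E} = \mathscr{S}^{sm}$, and $\mathscr{P} = Sm$, so that $F \circ U = W$ on the nose. Applied to $V$, (\ref{4.16}) produces a $\widetilde{\mathscr{P}}$-premotivic pseudofunctor $G: \widetilde{\mathcal{C}} \to {\rm Tri}^{\otimes}$ satisfying (Loc) together with a pseudonatural equivalence $\delta: V \to G \circ U$. The conditions (A--1)--(A--3) hold for $F$ by the explicit fiber-product construction in (\ref{2.2}), and by (\ref{5.6}) the functor $g^*$ is an equivalence for every $g$ in the class $\mathscr{W}$ of morphisms inverted by $F$. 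Thus (\ref{1.7}) applies, yielding a pseudofunctor $H: \mathcal{C} \to {\rm Tri}^{\otimes}$ together with a pseudonatural equivalence $\alpha: G \to H \circ F$. I then define $\kappa: V \to H \circ W$ as the whiskering $\alpha U$ composed with $\delta$.

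Next, for (Loc) on $H$: every closed immersion $i: Z \to S$ in $\mathscr{S}$ is $F(\rho_i)$ for a closed immersion $\rho_i$ in $\widetilde{\mathcal{C}}$ whose complement lies over the complement of $i$, and $\alpha$ transports the full-faithfulness of $\rho_{i*}$ and the conservativity of the relevant pair from $G$ (where these are verified in (\ref{4.14})) to $H$. For $Sm$-premotivicity, (\ref{3.3}) gives that $H$ is $F(\widetilde{\mathscr{P}})$-premotivic, but $F(\widetilde{\mathscr{P}})$ consists only of those smooth morphisms $v: Z \to S$ admitting a smooth extension $u$ in a Cartesian square with closed immersions. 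For a general smooth $f: X \to S$ in $\mathscr{S}$, I would apply (\ref{6.1}) Zariski-locally on $X$: because $f$ is smooth and $S$ is quasi-projective, $X$ admits a finite Zariski cover by opens $U_i$ such that each $U_i \hookrightarrow X \to S$ factors as a closed immersion into some $\mathbb{A}_S^{n_i}$ followed by the smooth projection to $S$, both of which lie in $F(\widetilde{\mathscr{P}})$. The hypotheses of (\ref{6.1}) are then satisfied and produce the left adjoint $f_\sharp$; the exchange transformations (B--3) and (B--4) for $f$ are then checked by reduction to the Mayer--Vietoris distinguished triangles of (\ref{2.15}), in each case reducing to the known case of $F(\widetilde{\mathscr{P}})$.

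For the functoriality statement, given the data $V', H', \kappa', \epsilon$, I would apply (\ref{4.13}) to produce a $\widetilde{\mathscr{P}}$-premotivic pseudonatural transformation $\beta: G \to G'$, unique up to isomorphism, compatible with $\epsilon$ via $\delta$ and $\delta'$. Then (\ref{1.8}) produces a pseudonatural transformation $\gamma: H \to H'$, unique up to isomorphism, compatible with $\beta$ via $\alpha$ and $\alpha'$. By (\ref{3.13}), $\gamma$ is automatically $F(\widetilde{\mathscr{P}})$-premotivic; extending (C--2) from $F(\widetilde{\mathscr{P}})$ to all smooth morphisms uses (\ref{6.4}) applied to the same Zariski-local factorizations as above. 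The hardest step will be this final extension to full $Sm$-premotivicity for both $H$ and $\gamma$: one must verify that the locally constructed $f_\sharp$ and the locally defined exchange transformations assemble coherently despite the non-canonical choice of cones in (\ref{6.1}), which requires careful diagram-chasing with Mayer--Vietoris triangles to see that the resulting global data is independent (up to canonical isomorphism) of the chosen open covers and factorizations.
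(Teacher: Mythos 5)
Your overall strategy is exactly the paper's: specialize $(\ref{2.5})$ to $\mathcal{C}=\mathscr{S}$, $\mathcal{E}=\mathscr{S}^{sm}$, $\mathscr{P}=Sm$; build $G$ and $\delta$ via $(\ref{4.16})$; invoke $(\ref{5.6})$ to verify the invertibility hypothesis on $\mathscr{W}$ and then $(\ref{1.7})$ to descend to $H$ and $\alpha$; get $F(\widetilde{Sm})$-premotivicity from $(\ref{3.3})$ and upgrade to full $Sm$-premotivicity by a Zariski-local argument plus $(\ref{6.1})$; and do functoriality by chaining $(\ref{4.13})$, $(\ref{1.8})$, $(\ref{3.13})$, and $(\ref{6.4})$. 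All of this matches the paper.

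One step, however, is stated incorrectly. You claim that for a smooth $f:X\to S$ the open $U_i\subset X$ admits a factorization $U_i\to\mathbb{A}^{n_i}_S\to S$ with the first map a \emph{closed immersion}, and that both factors lie in $F(\widetilde{Sm})$. Neither assertion holds: a smooth $S$-scheme cannot be a closed subscheme of $\mathbb{A}^n_S$ unless it is finite over $S$, and in any case a closed immersion is not smooth, hence not in $F(\widetilde{Sm})$ (which consists only of smooth morphisms fitting in Cartesian squares with smooth morphisms in $\mathscr{S}^{sm}$). Furthermore $\mathbb{A}^n_S$ need not lie in $\mathscr{S}^{sm}$ when $S$ does not. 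What you actually need, and what the paper uses, is the EGA lifting result (\cite[IV.18.1.1]{EGA}): after choosing a closed immersion $Z_1\to S_1$ with $S_1\in\mathscr{S}^{sm}$, the smooth $v_1:Z_2\to Z_1$ extends Zariski-locally on $Z_2$ to a Cartesian square with a smooth morphism $u_1:S_2\to S_1$; this places $v_1|_{U_i}$ directly in $F(\widetilde{Sm})$, and quasi-compactness of $Z_2$ plus $(\ref{6.1})$ finishes. Your concern at the end about the non-canonical cone choices in $(\ref{6.1})$ is not an issue since left adjoints are unique up to unique isomorphism; the legitimate point buried there is that $(\text{B--3})$ and $(\text{B--4})$ for the newly constructed $f_\sharp$ also require a Zariski-local Mayer--Vietoris reduction, which the paper leaves implicit.
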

\begin{proof}
  Consider the diagram
  \[\begin{tikzcd}
    \mathcal{E}\arrow[rd,"V"']\arrow[r,"U"]&\widetilde{\mathcal{C}}\arrow[r,"F"]&\mathscr{S}\\
    &{\rm Tri}^{\otimes}
  \end{tikzcd}\]
  of $2$-categories where $\mathcal{C}=\mathscr{S}$, $\mathcal{E}=\mathscr{S}^{sm}$, and $\widetilde{\mathcal{C}}$, $F$, and $U$ are given by (\ref{2.5}). By (\ref{4.16}), there are a $\widetilde{Sm}$-premotivic pseudofunctor $G:\widetilde{\mathcal{C}}\rightarrow {\rm Tri}^{\otimes}$ satisfying (Loc) and a pseudonatural equivalence $\delta:V\rightarrow G\circ U$ such that the diagram
  \[\begin{tikzcd}
    \mathcal{E}\arrow[rd,"V"']\arrow[r,"U"]&\widetilde{\mathcal{C}}\arrow[d,"G"]\arrow[ld,phantom,"{\rotatebox[origin=c]{0}{$\Leftrightarrow$}}_{\delta}",very near start]\arrow[r,"F"]&\mathcal{C}\\
    \,&{\rm Tri}^\otimes
  \end{tikzcd}\]
  of $2$-categories commutes. Then by (\ref{5.6}), for any morphism $\widetilde{f_1}:\widetilde{X_2}\rightarrow \widetilde{X_1}$ given by a commutative diagram
  \[\begin{tikzcd}
    (Z_1\arrow[d,"{\rm id}"]\arrow[r,"i_2"]&S_2)\arrow[d,"u_1"]\\
    (Z_1\arrow[r,"i_1"]&S_1)
  \end{tikzcd}\]
  in $\mathcal{C}$, $\widetilde{f_1}^*$ is an equivalence. Thus by (\ref{1.7}), there are a contravariant pseudofunctor $H:\mathcal{C}\rightarrow {\rm Tri}^{\otimes}$ and a pseudonatural equivalence $\alpha:G\rightarrow H\circ F$ such that the diagram
  \[\begin{tikzcd}
    \mathcal{E}\arrow[rd,"V"']\arrow[r,"U"]&\widetilde{\mathcal{C}}\arrow[d,"G"]\arrow[ld,phantom,"{\rotatebox[origin=c]{0}{$\Leftrightarrow$}}_{\delta}",very near start]\arrow[r,"F"]\arrow[rd,phantom,"{\rotatebox[origin=c]{0}{$\Leftrightarrow$}}_\alpha",very near start]&\mathcal{C}\arrow[ld,"H"]\\
    \,&{\rm Tri}^\otimes&\,
  \end{tikzcd}\]
  of $2$-categories commutes, and by (\ref{3.3}), $H$ is $F(\widetilde{Sm})$-premotivic where $\widetilde{Sm}$ is defined in (\ref{2.5}). Thus we have a commutative diagram
  \[\begin{tikzcd}
    \mathcal{E}\arrow[rd,"V"']\arrow[rr,"W"]\arrow[rrd,phantom,"{\rotatebox[origin=c]{0}{$\Leftrightarrow$}}_{\kappa}"]&&\mathcal{C}\arrow[ld,"H"]\\
    \,&{\rm Tri}^\otimes&\,
  \end{tikzcd}\]
  of $2$-categories where $W=F\circ U$ and $\kappa=(\alpha\circ U)\circ \delta$. The remaining of the first part of the statement is showing that $H$ is $Sm$-premotivic. For this, consider a smooth morphism $v_1:Z_2\rightarrow Z_1$ in $\mathcal{C}$, and choose a closed immersion $i_1:Z_1\rightarrow S_1$ where $S_1$ is in $\mathcal{E}$. Then by \cite[IV.18.1.1]{EGA}, Zariski locally on $Z_2$, there is a Cartesian diagram
  \[\begin{tikzcd}
    Z_2\arrow[d,"v_1"]\arrow[r,"i_2"]&S_2\arrow[d,"u_1"]\\
    Z_1\arrow[r,"i_1"]&S_1
  \end{tikzcd}\]
  in $\mathcal{C}$ such that $u_1$ is smooth. Thus Zariski locally on $Z_2$, $v_1$ is a $F(\widetilde{Sm})$-morphism, so Zariski locally on $Z_1$, $v_1^*$ has a left adjoint. Then (\ref{6.1}) implies that $v_1$ has a left adjoint since $Z_2$ is quasi-compact. Thus we have proven the first part of the statement.

  Consider a diagram
  \[\begin{tikzcd}
    V\arrow[d,"\epsilon"]\arrow[r,"\delta"]&G\circ U\arrow[r,"\alpha"]&H\circ W\\
    V'\arrow[r,"\delta'"]&G'\circ U\arrow[r,"\alpha'"]&H'\circ W
  \end{tikzcd}\]
  of pseudofunctors. By (\ref{4.13}), there is a $\widetilde{Sm}$-premotivic pseudonatural transformation $\beta:G\rightarrow G'$ such that the induced pseudonatural transformation $G\circ U\rightarrow G'\circ U$ makes the above diagram commutative, and then by (\ref{1.8}) and (\ref{3.13}), there is a $F(\widetilde{Sm})$-premotivic pseudonatural transformation $\gamma:H\rightarrow H'$ such that the induced pseudonatural transformation $H\circ W\rightarrow H'\circ W$ makes the above diagram commutes. By (\ref{6.4}), $\gamma$ is $Sm$-premotivic pseudonatural transformation as in the above paragraph. Hence the remaining to show that $\gamma$ is unique up to isomorphism. If $\gamma':H\rightarrow H'$ is another $Sm$-premotivic pseudonatural transformation such that the induced pseudonatural transformation $H\circ W\rightarrow H'\circ W$ makes the above diagram commutative, then by (\ref{4.13}), the two pseudonatural transformations
  \[\alpha'^{-1}\circ (\gamma\circ F)\circ \alpha:G\rightarrow G',\quad \alpha'^{-1}\circ (\gamma'\circ F)\circ \alpha:G\rightarrow G'\]
  are isomorphic. Thus by (\ref{1.8}), $\gamma$ and $\gamma'$ are isomorphic since we have the commutative diagrams
  \[\begin{tikzcd}
    G\arrow[d,"\alpha'^{-1}\circ (\gamma\circ F)\circ \alpha"']\arrow[r,"\alpha"]&H\circ F\arrow[d,"\gamma"]\\
    G'\arrow[r,"\alpha'"]&H'\circ F
  \end{tikzcd}
  \quad
  \begin{tikzcd}
    G\arrow[d,"\alpha'^{-1}\circ (\gamma'\circ F)\circ \alpha"']\arrow[r,"\alpha"]&H\circ F\arrow[d,"\gamma'"]\\
    G'\arrow[r,"\alpha'"]&H'\circ F
  \end{tikzcd}
  \]
  of pseudonatural transformations.
\end{proof}
\section{Proof of (\ref{0.7})}
\begin{thm}\label{7.1}
  Consider a commutative diagram
  \[\begin{tikzcd}
    \mathscr{S}^{sm}\arrow[rd,"V"']\arrow[rrd,phantom,"{\rotatebox[origin=c]{0}{$\Leftrightarrow$}}_{\kappa}"]\arrow[rr,"W"]&&\mathscr{S}\arrow[ld,"H"]\\
    &{\rm Tri}^{\otimes}&\;
  \end{tikzcd}\]
  where
  \begin{enumerate}[{\rm (i)}]
    \item $W$ denotes the inclusion functor,
    \item $H$ is a $\mathscr{P}$-premotivic pseudofunctor satisfying {\rm (Loc)},
    \item $V$ is a $\mathscr{P}'$-premotivic pseudofunctor satisfying {\rm (Loc)},
    \item $\kappa:V\rightarrow H\circ W$ is a pseudonatural equivalence.
  \end{enumerate}
  Then we have the followings.
  \begin{enumerate}[{\rm (1)}]
    \item If $V$ satisfies {\rm (B--5)}, then $H$ satisfies {\rm (B--5)}.
    \item If $V$ satisfies {\rm (B--6)}, then $H$ satisfies {\rm (B--6)}.
    \item If $V$ satisfies {\rm (B--6)} and {\rm (B--7)}, then $H$ satisfies {\rm (B--7)}.
    \item If $V$ satisfies {\rm (B--6)} and {\rm (B--8)}, then $H$ satisfies {\rm (B--8)}.
  \end{enumerate}
\end{thm}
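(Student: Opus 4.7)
The unifying strategy is to exploit the embedding provided by (Loc): any $S\in\mathscr{S}$ is quasi-projective over $T$, hence admits a closed immersion $i:S\rightarrow S_1$ with $S_1\in\mathscr{S}^{sm}$ (e.g.\ into a projective space over $T$). By $\kappa$ we may identify $H(S_1)=V(S_1)$. Axiom (Loc) then gives a fully faithful $i_*:H(S)\rightarrow H(S_1)$ with $i^*i_*\cong{\rm id}$ and a distinguished triangle $j_\sharp j^*\rightarrow{\rm id}\rightarrow i_*i^*\rightarrow[1]$, whose essential image is the subcategory of objects killed by $j^*$ (where $j$ denotes the complement of $i$). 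Consequently $i_*$ commutes with small sums (sums in $V(S_1)$ of objects killed by $j^*$ are again killed by $j^*$, since $j^*$ is a left adjoint). This is the mechanism by which we pull each axiom down from $V(S_1)$ to $H(S)$.

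For (1) and (2), fix $K\in H(S)$ and form the Cartesian square with projection $p:\mathbb{A}^1_S\rightarrow S$, $p_1:\mathbb{A}^1_{S_1}\rightarrow S_1$, and closed immersion $i':\mathbb{A}^1_S\rightarrow\mathbb{A}^1_{S_1}$ base-changed from $i$. Using the exchange $i^*p_{1\sharp}\cong p_\sharp i'^*$ of (B--3), the exchange $i'^*a_{1*}\cong a_*i^*$ of (\ref{2.8}) for the Cartesian square involving the zero sections $a_1$, $a$, and the fact that $i^*$ is monoidal (so $i^*1_{S_1}=1_S$), I would show $i^*(1_{S_1}(1))\cong 1_S(1)$. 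Then (B--6) for $V$ transfers directly: if $1_{S_1}(1)\otimes L\cong 1_{S_1}$, applying $i^*$ yields an inverse of $1_S(1)$. For (B--5), apply $i_*$ to the counit $p_\sharp p^*K\rightarrow K$; using (\ref{2.16}) one gets $i_*p_\sharp p^*K\cong p_{1\sharp}i'_*p^*K\cong p_{1\sharp}p_1^*i_*K$, where the last isomorphism uses that $p_1^*i_*K$ is killed by the restriction to the open complement of $i'$ (since $i_*K$ is killed by $j^*$ and $p_1$ commutes with this base change), combined with the fact that $i'_*\rho_{i'}^*\cong{\rm id}$ on such objects. The counit then becomes an isomorphism by (B--5) for $V$, and fully faithfulness of $i_*$ finishes.

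For (3), assume $V$ satisfies (B--6) and (B--7). Let $K\in H(S)$ be nonzero; then $i_*K\in H(S_1)$ is nonzero, so there exist a smooth $g:Y\rightarrow S_1$ and $(d,n)\in\mathbb{Z}\times\mathbb{Z}$ with ${\rm Hom}(g_\sharp 1_Y(d)[n],i_*K)\neq 0$. Base-change $g$ along $i$ to obtain a smooth $g':Y'\rightarrow S$ and closed immersion $i':Y'\rightarrow Y$. Using (\ref{2.8}) for the resulting Cartesian square, $g^*i_*K\cong i'_*g'^*K$, and then by adjunction
\[
{\rm Hom}(g_\sharp 1_Y(d)[n],i_*K)\cong{\rm Hom}(1_Y(d)[n],i'_*g'^*K)\cong{\rm Hom}(1_{Y'}(d)[n],g'^*K)\cong{\rm Hom}(g'_\sharp 1_{Y'}(d)[n],K),
\]
where I use that $i'^*$ is monoidal and sends $1_Y(d)[n]$ to $1_{Y'}(d)[n]$ (by the analogue of the computation in paragraph 2, applied at the level of $Y$). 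Thus the family $\mathcal{F}_S$ of (B--7) generates $H(S)$.

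For (4), assume $V$ satisfies (B--6) and (B--8). Let $f:X\rightarrow S$ be smooth in $\mathscr{S}$. Since $f^*$ has the right adjoint $f_*$ by (B--1), it commutes with small sums, so by the adjunction ${\rm Hom}(f_\sharp 1_X(d)[n],-)\cong{\rm Hom}(1_X(d)[n],f^*-)$ we reduce to compactness of $1_X(d)[n]$ in $H(X)$. Apply the embedding technique to $X$: choose a closed immersion $i:X\rightarrow X_1$ with $X_1\in\mathscr{S}^{sm}$. Since $i_*$ is fully faithful and preserves small sums, and since $i^*1_{X_1}(d)[n]\cong 1_X(d)[n]$ (by paragraph 2), for any family $\{K_\alpha\}$ in $H(X)$:
\[
{\rm Hom}(1_X(d)[n],\bigoplus K_\alpha)\cong{\rm Hom}(1_{X_1}(d)[n],i_*\bigoplus K_\alpha)\cong{\rm Hom}(1_{X_1}(d)[n],\bigoplus i_*K_\alpha),
\]
which by compactness of $1_{X_1}(d)[n]$ in $V(X_1)$ breaks up as $\bigoplus{\rm Hom}(1_{X_1}(d)[n],i_*K_\alpha)\cong\bigoplus{\rm Hom}(1_X(d)[n],K_\alpha)$. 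The main obstacle throughout is the careful verification in paragraph 2 that $i^*(1_{S_1}(1))\cong 1_S(1)$ via the compatibility of the zero-section and projection exchanges, as every subsequent reduction depends on the pullback stability of the Tate twist.
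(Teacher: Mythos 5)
Your proof is correct and follows the same strategy as the paper's: choose a closed immersion $i:Z\rightarrow S_1$ with $S_1\in\mathscr{S}^{sm}$, use the exchange isomorphisms of (\ref{2.8}) and (\ref{2.16}) to transport each axiom along $i^*$ and $i_*$, and invoke full faithfulness of $i_*$ from (Loc) to descend. The minor organizational differences (conjugating by $i_*$ rather than $i^*$ in the proof of (B--5), phrasing generation in (B--7) via detection of nonzero objects rather than conservativity, and reducing to compactness of $1_X(d)[n]$ before embedding rather than after) are cosmetic; the key ingredients and the structure of the reduction are identical to those in the paper.
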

\begin{proof}
  (1) Let $i:Z\rightarrow S$ be a closed immersion in $\mathscr{S}$ where $S$ is in $\mathscr{S}^{sm}$. Consider the Cartesian diagram
  \[\begin{tikzcd}
    Z\times \mathbb{A}^1\arrow[d,"i'"]\arrow[r,"p'"]&Z\arrow[d,"i"]\\
    S\times \mathbb{A}^1\arrow[r,"p"]&S
  \end{tikzcd}\]
  in $\mathscr{S}$ where $p$ denotes the projection. Then we have the commutative diagram
  \[\begin{tikzcd}
    p_\sharp'i'^*i_*'p'^*\arrow[d,"Ex"]\arrow[r,"ad'"]&p_\sharp'p'^*\arrow[ddd,"ad'"]\\
    i^*p_\sharp i_*'p'^*\arrow[d,"Ex^{-1}"]\\
    i^*p_\sharp p^*i_*\arrow[d,"ad'"]\\
    i^*i_*\arrow[r,"ad'"]&{\rm id}
  \end{tikzcd}\]
  of functors. Here, the left middle vertical arrow is defined and an isomorphism by (\ref{2.8}). The left top vertical arrow is an isomorphism by (B--3), and the lower horizontal and upper horizontal arrows are isomorphisms by (Loc). Thus the right vertical arrow is an isomorphism since the left bottom vertical arrow is an isomorphism by (B--5) for $V$.\\[4pt]
  (2) Let $i:Z\rightarrow S$ be a closed immersion in $\mathcal{C}$ such that $S\in \mathscr{S}^{sm}$. If $V$ satisfies (B--6), then there is an object $\tau$ of $H(S)$ such that $1_S(1)\otimes \tau\cong 1_S$. Then
  \[1_Z(1)\otimes i^*\tau\cong i^*(1_S(1)\otimes \tau)\cong i^*1_S\cong 1_Z,\]
  so $1_Z(1)$ is $\otimes$-invertible in $H(Z)$. Thus $H$ satisfies (B--6).\\[4pt]
  (3) Suppose that $V$ satisfies (B--6) and (B--7). Then by (2), $H$ satisfies (B--6). Let $i:Z\rightarrow S$ be a closed immersion in $\mathcal{C}$ such that $S\in \mathscr{S}^{sm}$. For any object $X$ of $\mathscr{S}$, let $\mathcal{F}_X$ denote the family consisting of objects
  \[f_\sharp 1_{X'}(d)[n]\]
  in $H(S)$ where $(d,n)\in \mathbb{Z}\times \mathbb{Z}$ and $g:X'\rightarrow X$ is a smooth morphism in $\mathscr{S}$. The remaining is to show that $\mathcal{F}_Z$ generates $H(Z)$. Let $L\rightarrow L'$ be a morphism in $H(Z)$ such that the induced homomorphism
  \[{\rm Hom}_{H(Z)}(K',L)\rightarrow {\rm Hom}_{H(Z)}(K',L')\]
  is an isomorphism for any object $K'$ of $\mathcal{F}_Z$. Then the induced homomorphism
  \[{\rm Hom}_{H(Z)}(i^*K,L)\rightarrow {\rm Hom}_{H(Z)}(i^*K,L')\]
  is an isomorphism for any object $K$ of $\mathcal{F}_S$, so the induced homomorphism
  \[{\rm Hom}_{H(S)}(K,i_*L)\rightarrow {\rm Hom}_{H(S)}(K,i_*L')\]
  is an isomorphism for any object $K$ of $\mathcal{F}_S$. Since $V$ satisfies (B--7), this means that the induced morphism $i_*L\rightarrow i_*L'$ in $H(S)$ is an isomorphism. Thus the morphism $L\rightarrow L'$ is an isomorphism since $i_*$ is fully faithful by (Loc). This proves that $H$ satisfies (B--7).

  (4) Suppose that $V$ satisfies (B--6) and (B--8). Then by (2), $H$ satisfies (B--6). Let $i:Z\rightarrow S$ be a closed immersion in $\mathscr{S}$ such that $S$ is in $\mathscr{S}^{sm}$. We will first show that the object $1_Z$ in $H(Z)$ is compact. By (Loc), we have the distinguished triangle
  \[j_\sharp j^*1_S\stackrel{ad'}\longrightarrow 1_S\stackrel{ad}\longrightarrow i_*i^*1_S\longrightarrow j_\sharp j^*1_S[1]\]
  in $H(S)$. Since $V$ satisfies (B--8), $j_\sharp j^*1_S$ and $1_S$ are compact. Thus $i_*i^*1_S$ is compact, so $1_Z\cong i^*1_S$ is compact since $i_*$ is fully faithful by (Loc). Let $g:Z'\rightarrow Z$ be a smooth morphism in $\mathscr{S}$. Then we have
  \[{\rm Hom}_{H(Z)}(g_\sharp 1_{Z'},-)={\rm Hom}_{H(Z')}(1_{Z'},g^*(-)),\]
  and it commutes with small sums since $g^*$ commutes with small sums and we have shown that $1_{Z'}$ is compact.
\end{proof}
\begin{thm}\label{7.2}
  In {\rm (\ref{6.3})}, if $V$ satisfies the axioms from {\rm (B--5)} to {\rm (B--8)}, then $H$ satisfies the Grothendieck six operations formalism in {\rm (\ref{0.7})}.
\end{thm}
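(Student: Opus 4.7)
The plan is to reduce (\ref{7.2}) directly to the chain of implications already assembled earlier in the paper, so that very little new work is required. Concretely, from (\ref{6.3}) we already have that $H$ is a $Sm$-premotivic pseudofunctor satisfying (Loc) and that $\kappa:V\rightarrow H\circ W$ is a pseudonatural equivalence. Thus the hypotheses of (\ref{7.1}) are fulfilled, with $\mathscr{P}=Sm$ on $\mathscr{S}$ and $\mathscr{P}'=Sm$ on $\mathscr{S}^{sm}$.

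The first step is to apply (\ref{7.1}) four times in order to transfer the axioms (B--5), (B--6), (B--7), (B--8) from $V$ to $H$. Explicitly, (\ref{7.1}(1)) gives (B--5) for $H$, (\ref{7.1}(2)) gives (B--6) for $H$, (\ref{7.1}(3)) gives (B--7), and (\ref{7.1}(4)) gives (B--8); each of these only uses the hypotheses assumed on $V$ together with the facts already established about $H$, $\kappa$, and (Loc). At this point $H$ is a $Sm$-premotivic pseudofunctor on $\mathscr{S}$ satisfying (Loc) together with (B--5), (B--6), (B--7), and (B--8).

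Next I invoke (\ref{2.13}), which states that under (Loc), (B--5), (B--6), (B--7), and (B--8) the axiom (B--9) is automatically satisfied (via \cite[2.4.47]{CD12}); equivalently $H$ is then \emph{motivic} in the sense of (\ref{2.10}). Finally, by (\ref{2.14}), any motivic pseudofunctor on $\mathscr{S}$ satisfies the Grothendieck six operations formalism as recalled from \cite[2.4.50]{CD12}, which is exactly the list of properties (1)--(7) in (\ref{0.7}). This completes the proof.

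There is essentially no obstacle here: the genuine content has already been packaged into (\ref{6.3}), (\ref{7.1}), (\ref{2.13}), and (\ref{2.14}), so (\ref{7.2}) is just the concatenation of these results. The only thing to be mildly careful about is bookkeeping of the class of morphisms along which one applies each axiom---making sure that the $Sm$ used in the statements of (B--5)--(B--8) on $\mathscr{S}$ is the same class that (\ref{6.3}) produces on $H$---but this is immediate since $W$ is the inclusion $\mathscr{S}^{sm}\hookrightarrow \mathscr{S}$ and smooth morphisms in $\mathscr{S}^{sm}$ are smooth in $\mathscr{S}$.
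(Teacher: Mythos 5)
Your proposal is correct and follows exactly the same route as the paper's proof, which simply cites (\ref{7.1}), (\ref{2.13}), and (\ref{2.14}); you have just spelled out the chain of implications in more detail. The bookkeeping remark about the class $Sm$ is also consistent with what (\ref{6.3}) delivers, so there is nothing further to add.
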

\begin{proof}
  It follows from (\ref{7.1}), (\ref{2.13}), and (\ref{2.14}).
\end{proof}
\section{Applications}
\begin{none}\label{8.1}
  Consider the pseudofunctor
  \[{\rm DM}(-,\Lambda):\mathscr{S}^{sm}\rightarrow {\rm Tri}^{\otimes},\]
  which is the restriction of the pseudofunctor ${\rm DM}(-,\Lambda)$ to $\mathscr{S}^{sm}$ defined in \cite[11.1.1]{CD12}. By \cite[11.1.2]{CD12}, it satisfies the axioms from (B--1) to (B--6), and by \cite[11.1.6]{CD12}, it satisfies (B--7) and (B--8). Moreover, by \cite[11.4.2]{CD12}, it satisfies ${\rm (Loc}_i)$ for any closed immersion $i:Z\rightarrow S$ in $\mathscr{S}^{sm}$. Since ${\rm DM}(\emptyset,\Lambda)=0$, the pseudofunctor ${\rm DM}(-,\Lambda)$ satisfies (Loc). Thus by (\ref{6.3}), we can define the following.
\end{none}
\begin{df}\label{8.2}
  We denote by
  \[{\rm DM}^{loc}(-,\Lambda):\mathscr{S}\rightarrow {\rm Tri}^{\otimes}\]
  a $Sm$-premotivic pseudofunctor satisfying (Loc) that is an extension of ${\rm DM}(-,\Lambda):\mathscr{S}^{sm}\rightarrow {\rm Tri}^{\otimes}$ to $\mathscr{S}$, which is unique up to pseudonatural equivalence.
\end{df}
\begin{none}\label{8.6}
  By (\ref{7.2}), it satisfies the Grothendieck six operations formalism in \cite[2.4.50]{CD12}.
\end{none}
\begin{rmk}\label{8.3}
  Note that the definition of ${\rm DM}^{loc}(-,\Lambda)$ {\it depends} on the base scheme $T$. We usually assume that $T$ is the spectrum of a field or a Dedekind domain. When $T$ is changed by a smooth quasi-projective morphism $T'\rightarrow T$ of schemes, by definition, $DM^{loc}(-,\Lambda)$ is not changed.
\end{rmk}
\begin{none}\label{8.4}
  We can also apply (\ref{6.3}) to \'etale realization. Assume that $T$ is a noetherian scheme separated over ${\rm Spec}\,\mathbb{Z}[1/n]$ where $n>1$ is an integer. Consider the pseudofunctor
  \[{\rm DM}_{\et}(-,\Lambda):\mathscr{S}^{sm}\rightarrow {\rm Tri}^{\otimes}\]
  defined in \cite[2.2.4]{CD16}. Then consider the change of coefficients
  \[{\rm DM}(-,\mathbb{Z})\rightarrow {\rm DM}(-,\mathbb{Z}/n\mathbb{Z}),\]
  which is the Nisnevich version of the $Sm$-premotivic pseudonatural transformation in \cite[5.3.1]{CD16}. Consider also the $Sm$-premotivic pseudonatural transformation
  \[{\rm DM}(-,\mathbb{Z}/n\mathbb{Z})\rightarrow {\rm DM}_{\et}(-,\mathbb{Z}/n\mathbb{Z})\]
  defined in \cite[2.2.8]{CD16}. We also have the $Sm$-premotivic pseudonatural equivalence
  \[{\rm DM}_{\et}(-,\mathbb{Z}/n\mathbb{Z})\rightarrow {\rm D}_{\et}(-,\mathbb{Z}/n\mathbb{Z})\]
  by \cite[4.4.5]{CD16}. Here, for $S\in \mathscr{S}$, ${\rm D}_{\et}(S,\mathbb{Z}/n\mathbb{Z})$ denotes the derived category of sheaves of $\mathbb{Z}/n\mathbb{Z}$-modules on the small \'etale site of $S$.

  Then we obtain the \'etale realization $R_{\et,n}$, which is the composition
  \[{\rm DM}(-,\mathbb{Z})\rightarrow {\rm DM}(-,\mathbb{Z}/n\mathbb{Z})\rightarrow {\rm DM}_{\et}(-,\mathbb{Z}/n\mathbb{Z})\rightarrow {\rm D}_{\et}(-,\mathbb{Z}/n\mathbb{Z}).\]
  The pseudofunctor
  \[{\rm D}_{\et}(-,\mathbb{Z}/n\mathbb{Z}):\mathscr{S}\rightarrow {\rm Tri}^{\otimes}\]
  satisfies (Loc) and the axioms from (B--1) to (B--4), whose proofs are in \cite[tome 3]{SGA4}. Hence by (\ref{6.3}), we can define the following.
\end{none}
\begin{df}\label{8.5}
  Assume that $T$ is a noetherian scheme separated over ${\rm Spec}\,{\bf Z}[1/n]$ where $n>1$ is an integer. We denote by
  \[R_{\et,n}^{loc}:{\rm DM}^{loc}(-,\mathbb{Z})\rightarrow {\rm D}_{\et}(-,\mathbb{Z}/n\mathbb{Z})\]
  a $Sm$-premotivic pseudonatural transformation that is an extension of $R_{\et,n}$ to $\mathscr{S}$, which is unique up to isomorphism.
\end{df}
\begin{rmk}\label{8.7}
  Note that the remark (\ref{8.3}) is also applied for $R_{\et,n}^{loc}$.
\end{rmk}
\begin{none}
  Using (\ref{8.5}), we can also obtain the $\ell$-adic realization
  \[{\rm DM}^{loc}(-,\mathbb{Z})\rightarrow {\rm D}_{\et}(-,\mathbb{Z}_\ell)\]
  when $T$ is a noetherian scheme separated over ${\rm Spec}\,\mathbb{Z}[1/\ell]$.
\end{none}
\begin{none}\label{8.8}
  In the case that $\Lambda$ is a ${\bf Q}$-algebra, let us compare ${\rm DM}^{loc}(-,\Lambda)$ with ${\rm DA}_{\et}(-,\Lambda)$ (see \cite[3.1]{Ayo14} for the definition of ${\rm DA}_t(-,\Lambda)$ where $t$ is a topology on $\mathscr{S}$) when $T$ is the spectrum of a field or a Dedekind domain. They are equivalent on $\mathscr{S}^{sm}$ by \cite[16.2.22]{CD12}, and the pseudofunctor
  \[{\rm DA}(-,\Lambda):\mathscr{S}\rightarrow {\rm Tri}^{\otimes}\]
  satisfies (Loc) by \cite[6.2.2]{CD12}. Thus by (\ref{6.3}), there is a pseudonatural equivalence between ${\rm DM}^{loc}(-,\Lambda):\mathscr{S}\rightarrow {\rm Tri}^{\otimes}$ and ${\rm DA}_{\et}(-,\Lambda):\mathscr{S}\rightarrow {\rm Tri}^{\otimes}$ that is an extension of the pseudonatural equivalence on $\mathscr{S}^{sm}$, which is unique up to isomorphism.
\end{none}
\section{Orientation}
\begin{none}\label{9.1}
  Consider the $Sm$-premotivic pseudofunctor
  \[\dmeff(-,\Lambda):\mathscr{S}^{sm}\rightarrow {\rm Tri}^{\otimes},\]
  which is the restriction of the $Sm$-premotivic pseudofunctor $\dmeff(-,\Lambda)$ defined in \cite[11.1.1]{CD12}. By \cite[6.3.15]{CD12}, it satisfies ${\rm (Loc}_i)$ for any closed immersion $i:Z\rightarrow S$ in $\mathscr{S}^{sm}$. Then $\dmeff(-,\Lambda)$ satisfies (Loc) since $\dmeff(\emptyset,\Lambda)=0$. Thus by (\ref{6.3}), we can define the following.
\end{none}
\begin{df}\label{9.2}
  We denote by
  \[\dmeffloc(-,\Lambda):\mathscr{S}\rightarrow {\rm Tri}^{\otimes}\]
  a $Sm$-premotivic pseudofunctor satisfying (Loc) that is an extension of $\dmeff(-,\Lambda):\mathscr{S}^{sm}\rightarrow {\rm Tri}^{\otimes}$ to $\mathscr{S}$, which is unique up to pseudonatural equivalence.
\end{df}
\begin{none}\label{9.3}
  Consider the $Sm$-premotivic pseudofunctor
  \[\mathcal{H}_{\bullet}:\mathscr{S}\rightarrow {\rm Tri}^{\otimes}\]
  in \cite[Following paragraph of 3.2.12]{MV}. In \cite[11.2.16]{CD12}, if we use \cite[5.3.19]{CD12} instead of \cite[5.3.28]{CD12}, we obtain the $Sm$-premotivic pseudonatural transformation
  \[{\rm DA}_{Nis}(-,\Lambda)\rightarrow \dmeff(-,\Lambda)\]
  of pseudofunctors from $\mathscr{S}^{sm}$ to ${\rm Tri}^{\otimes}$. By the argument in \cite[5.3.35]{CD12}, we obtain the $Sm$-premotivic pseudonatural transformation
  \[\mathcal{H}_{\bullet}\rightarrow {\rm DA}_{Nis}(-\Lambda)\]
  of pseudofunctors from $\mathscr{S}^{sm}$ to ${\rm Tri}^{\otimes}$. Thus by composing these two, we obtain the $Sm$-premotivic pseudonatural transformation
  \[\mathcal{H}_{\bullet}\rightarrow \dmeff(-,\Lambda).\]
  of pseudofunctors from $\mathscr{S}^{sm}$ to ${\rm Tri}^{\otimes}$
  By \cite[3.2.21]{MV}, $\mathcal{H}_{\bullet}$ satisfies (Loc). Thus by (\ref{6.3}), the above one can be extended to a $Sm$-premotivic pseudonatural transformation
  \[\mathcal{H}_{\bullet}\rightarrow \dmeffloc(-,\Lambda)\]
  of pseudofunctors from $\mathscr{S}$ to ${\rm Tri}^{\otimes}$, and it is unique up to isomorphism.

  Consider the $Sm$-premotivic pseudonatural transformation
  \[\Sigma^{\infty}:\dmeff(-,\Lambda)\rightarrow {\rm DM}(-,\Lambda)\]
  of pseudofunctors from $\mathscr{S}^{sm}$ to ${\rm Tri}^{\otimes}$ that is the restriction of $\Sigma^{\infty}$ to $\mathscr{S}^{sm}$ defined in \cite[11.1.2.1]{CD12}. Since $\dmeffloc$ satisfies (Loc), by (\ref{6.3}), the above one can be extended to a $Sm$-premotivic pseudonatural transformation
  \[\Sigma^{\infty}:\dmeffloc(-,\Lambda)\rightarrow {\rm DM}^{loc}(-,\Lambda)\]
  of pseudofunctors from $\mathscr{S}$ to ${\rm Tri}^{\otimes}$, which is unique up to isomorphism.

  Now in \cite[11.3.1]{CD12}, we have the morphism $\mathfrak{c}_1:M({\bf P}^{\infty})\rightarrow {\bf 1}(1)[2]$ in $\dmeff({\rm Spec}\,{\bf Z},\Lambda)=\dmeffloc({\rm Spec}\,{\bf Z},\Lambda)$. Then using the $Sm$-premotivic pseudonatural transformations constructed in the above paragraphs, the argument in \cite[\S 11.3]{CD12} holds for $\dmeffloc(-,\Lambda)$ and ${\rm DM}^{loc}(-,\Lambda)$. Thus we can construct a function
  \[c_1:{\rm Pic}(X)\rightarrow {\rm Hom}_{{\rm DM}^{loc}(X,\Lambda)}(M(X),{\bf 1}(1)[2])\]
  for each object $X$ of $\mathscr{S}$ such that $c_1$ is functorial on $X$ and that $c_1(L):M({\bf P}^1)\rightarrow {\bf 1}(1)[2]$ is the canonical projection where $L$ is the canonical bundle of ${\bf P}^1$.

  Then by the argument in \cite[2.4.40]{CD12}, we have the following theorem.
\end{none}
\begin{thm}\label{9.4}
  The $Sm$-premotivic pseudofunctor ${\rm DM}^{loc}(-,\Lambda)$ admits an orientation in the sense of {\rm \cite[2.4.38]{CD12}}.
\end{thm}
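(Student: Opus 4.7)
The plan is to use the criterion of \cite[2.4.40]{CD12}: an orientation of a $Sm$-premotivic pseudofunctor is equivalent to a functorial first Chern class assignment $c_1:{\rm Pic}(X)\to{\rm Hom}(M(X),{\bf 1}(1)[2])$ with the normalization $c_1(\mathcal{O}_{\mathbb{P}^1}(1))$ equal to the canonical projection $M(\mathbb{P}^1)\to{\bf 1}(1)[2]$. So the entire task reduces to constructing such a $c_1$ for ${\rm DM}^{loc}(-,\Lambda)$ over all of $\mathscr{S}$ and verifying functoriality and normalization.

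First, I would assemble the machinery already prepared in (\ref{9.3}): the extended $Sm$-premotivic pseudonatural transformations $\mathcal{H}_{\bullet}\to\dmeffloc(-,\Lambda)$ and $\Sigma^{\infty}:\dmeffloc(-,\Lambda)\to{\rm DM}^{loc}(-,\Lambda)$, together with the distinguished morphism $\mathfrak{c}_1:M(\mathbb{P}^{\infty})\to{\bf 1}(1)[2]$ in $\dmeffloc({\rm Spec}\,\mathbb{Z},\Lambda)=\dmeff({\rm Spec}\,\mathbb{Z},\Lambda)$. For each $X\in\mathscr{S}$, the set ${\rm Pic}(X)$ is represented by homotopy classes of maps into $\mathbb{P}^{\infty}$ at the level of $\mathcal{H}_{\bullet}$; transporting along $\mathcal{H}_{\bullet}\to\dmeffloc(-,\Lambda)\to{\rm DM}^{loc}(-,\Lambda)$ and composing with the pullback of $\mathfrak{c}_1$ produces the desired map $c_1(L):M(X)\to{\bf 1}(1)[2]$. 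The $Sm$-premotivic naturality of the transformations guarantees that $c_1$ commutes with arbitrary pullback $f^*:{\rm Pic}(X)\to{\rm Pic}(Y)$ for $f:Y\to X$ in $\mathscr{S}$, reproducing verbatim the argument of \cite[\S 11.3]{CD12}, which is formally stated there for $\mathscr{S}^{sm}$ but uses nothing beyond the existence of these pseudonatural transformations and the axioms of a $Sm$-premotivic pseudofunctor satisfying (Loc). The normalization on $\mathbb{P}^1$ is checked over ${\rm Spec}\,\mathbb{Z}$ (where ${\rm DM}^{loc}={\rm DM}$), and it propagates to all of $\mathscr{S}$ by naturality along the structural morphism $X\to{\rm Spec}\,\mathbb{Z}$.

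The main obstacle is verifying that the construction of \cite[\S 11.3]{CD12}, originally carried out on $\mathscr{S}^{sm}$, remains valid on all of $\mathscr{S}$. Two points need care: (a) $M(X)=p_\sharp{\bf 1}_X$ requires $p:X\to S$ to be smooth, which is no longer automatic—however, the Chern class is only needed as a morphism $M(X)\to{\bf 1}(1)[2]$ in ${\rm DM}^{loc}(X,\Lambda)$ with $S=X$, where $p={\rm id}$; (b) the representability of line bundles by maps to $\mathbb{P}^{\infty}$ in $\mathcal{H}_{\bullet}(X)$ must hold for general $X\in\mathscr{S}$, which follows from the Nisnevich-local nature of ${\rm Pic}$ and the fact that $\mathcal{H}_{\bullet}$ satisfies (Loc). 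Once these are in place, functoriality in $f$ reduces to the commutativity of two small diagrams of pseudonatural transformations, which is automatic from the $Sm$-premotivic axiom (C--2) and the extension/uniqueness in (\ref{6.3}).

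Finally, having produced $c_1$ functorially on all of $\mathscr{S}$ with the correct value on $\mathcal{O}_{\mathbb{P}^1}(1)$, \cite[2.4.40]{CD12} applies directly to yield an orientation on ${\rm DM}^{loc}(-,\Lambda)$ in the sense of \cite[2.4.38]{CD12}, which is what (\ref{9.4}) asserts.
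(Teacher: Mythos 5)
Your proposal follows essentially the same route as the paper: transport the first Chern class construction of \cite[\S 11.3]{CD12} to all of $\mathscr{S}$ via the extended pseudonatural transformations $\mathcal{H}_{\bullet}\rightarrow \dmeffloc(-,\Lambda)\rightarrow {\rm DM}^{loc}(-,\Lambda)$ and the universal morphism $\mathfrak{c}_1$, verify functoriality and normalization, and then invoke \cite[2.4.40]{CD12}. The paper states this more tersely (asserting that the argument of \cite[\S 11.3]{CD12} carries over), while you flag and address the two points where the extension from $\mathscr{S}^{sm}$ to $\mathscr{S}$ requires care; both versions are the same argument.
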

\titleformat*{\section}{\center \scshape }

\end{document}